\DeclareMathOperator*\spt{spt}
\DeclareMathOperator*\uplim{\overline{lim}}
\begin{document}

\newtheorem{theorem}{Theorem}
\newtheorem{lemma}[theorem]{Lemma}
\newtheorem{claim}[theorem]{Claim}
\newtheorem{cor}[theorem]{Corollary}
\newtheorem{conj}[theorem]{Conjecture}
\newtheorem{prop}[theorem]{Proposition}
\newtheorem{definition}[theorem]{Definition}
\newtheorem{question}[theorem]{Question}
\newtheorem{example}[theorem]{Example}
\newcommand{\hh}{{{\mathrm h}}}
\newtheorem{remark}[theorem]{Remark}

\numberwithin{equation}{section}
\numberwithin{theorem}{section}
\numberwithin{table}{section}
\numberwithin{figure}{section}

\def\sssum{\mathop{\sum\!\sum\!\sum}}
\def\ssum{\mathop{\sum\ldots \sum}}
\def\iint{\mathop{\int\ldots \int}}

\newcommand{\diam}{\operatorname{diam}}

\def\squareforqed{\hbox{\rlap{$\sqcap$}$\sqcup$}}
\def\qed{\ifmmode\squareforqed\else{\unskip\nobreak\hfil
\penalty50\hskip1em \nobreak\hfil\squareforqed
\parfillskip=0pt\finalhyphendemerits=0\endgraf}\fi}%%

%  use the AMS-Euler Fraktur fonts
%%%%%%%%%%%%%%%%%%%%%%%%%%%%%%%%%%
\newfont{\teneufm}{eufm10}
\newfont{\seveneufm}{eufm7}
\newfont{\fiveeufm}{eufm5}
%%%%%%%%%%%%%%%%%%%%%%%%%%%%%%%%%
%
%  allow automatic size selection in math mode
%
%%%%%%%%%%%%%%%%%%%%%%%%%%%%%%%%%
\newfam\eufmfam
     \textfont\eufmfam=\teneufm
\scriptfont\eufmfam=\seveneufm
     \scriptscriptfont\eufmfam=\fiveeufm
%%%%%%%%%%%%%%%%%%%%%%%%%%%%%%%%%
%
%  \frak works on a single symbol at a time...
%
\def\frak#1{{\fam\eufmfam\relax#1}}
\def\muS{\mu_{\mathsf {S}}}
\def\muM{\mu_{\mathsf {M}}}
\def\muo{\mu_{\bomega}}

\def\muMd{\mu_{\mathsf {M, \delta}}}

\newcommand{\bflambda}{{\boldsymbol{\lambda}}}
\newcommand{\bfmu}{{\boldsymbol{\mu}}}
\newcommand{\bfxi}{{\boldsymbol{\eta}}}
\newcommand{\bfrho}{{\boldsymbol{\rho}}}

\def\eps{\varepsilon}

\def\fI{\mathfrak I}
\def\fK{\mathfrak K}
\def\fT{\mathfrak{T}}
\def\fL{\mathfrak L}
\def\fR{\mathfrak R}

\def\fA{{\mathfrak A}}
\def\fB{{\mathfrak B}}
\def\fC{{\mathfrak C}}
\def\fM{{\mathfrak M}}
\def\fS{{\mathfrak  S}}
\def\fU{{\mathfrak U}}

\def\T {\mathsf {T}}
\def\Tor{\mathsf{T}_d}
\def\Tore{\widetilde{\mathrm{T}}_{d} }

\def\sM {\mathsf {M}}

\def\ss{\mathsf {s}}

\def\Kmnd{\cK_d(m,n)}
\def\Kmnp{\cK_p(m,n)}
\def\Kmnq{\cK_q(m,n)}

\def \balpha{\bm{\alpha}}
\def \bbeta{\bm{\beta}}
\def \bgamma{\bm{\gamma}}
\def \bdelta{\bm{\delta}}
\def \bzeta{\bm{\zeta}}
\def \blambda{\bm{\lambda}}
\def \bchi{\bm{\chi}}
\def \bphi{\bm{\varphi}}
\def \bpsi{\bm{\psi}}
\def \bnu{\bm{\nu}}
\def \bxi{\bm{\xi}}
\def \bomega{\bm{\omega}}

\def \bell{\bm{\ell}}

\def\eqref#1{(\ref{#1})}

\def\vec#1{\mathbf{#1}}

\newcommand{\abs}[1]{\left| #1 \right|}

\def\Sp{\mathbb{S}}

\def\Zq{\mathbb{Z}_q}
\def\Zqx{\mathbb{Z}_q^*}
\def\Zd{\mathbb{Z}_d}
\def\Zdx{\mathbb{Z}_d^*}
\def\Zf{\mathbb{Z}_f}
\def\Zfx{\mathbb{Z}_f^*}
\def\Zp{\mathbb{Z}_p}
\def\Zpx{\mathbb{Z}_p^*}
\def\cM{\mathcal M}
\def\cE{\mathcal E}
\def\cH{\mathcal H}

\def\le{\leqslant}

\def\ge{\geqslant}

\def\sfB{\mathsf {B}}
\def\sfC{\mathsf {C}}
\def\L{\mathsf {L}}
\def\FF{\mathsf {F}}

\def\sE {\mathscr{E}}
\def\sS {\mathscr{S}}

%%%%%%%%%%%%%%%%%%%%%%%%%
% Alphabet calligraphie %
%%%%%%%%%%%%%%%%%%%%%%%%%
\def\cA{{\mathcal A}}
\def\cB{{\mathcal B}}
\def\cC{{\mathcal C}}
\def\cD{{\mathcal D}}
\def\cE{{\mathcal E}}
\def\cF{{\mathcal F}}
\def\cG{{\mathcal G}}
\def\cH{{\mathcal H}}
\def\cI{{\mathcal I}}
\def\cJ{{\mathcal J}}
\def\cK{{\mathcal K}}
\def\cL{{\mathcal L}}
\def\cM{{\mathcal M}}
\def\cN{{\mathcal N}}
\def\cO{{\mathcal O}}
\def\cP{{\mathcal P}}
\def\cQ{{\mathcal Q}}
\def\cR{{\mathcal R}}
\def\cS{{\mathcal S}}
\def\cT{{\mathcal T}}
\def\cU{{\mathcal U}}
\def\cV{{\mathcal V}}
\def\cW{{\mathcal W}}
\def\cX{{\mathcal X}}
\def\cY{{\mathcal Y}}
\def\cZ{{\mathcal Z}}
\newcommand{\rmod}[1]{\: \mbox{mod} \: #1}

\def\cg{{\mathcal g}}

\def\vt{\mathbf t}
\def\vy{\mathbf y}
\def\vr{\mathbf r}
\def\vx{\mathbf x}
\def\va{\mathbf a}
\def\vb{\mathbf b}
\def\vc{\mathbf c}
\def\ve{\mathbf e}
\def\vh{\mathbf h}
\def\vk{\mathbf k}
\def\vm{\mathbf m}
\def\vz{\mathbf z}
\def\vu{\mathbf u}
\def\vv{\mathbf v}
\def\v0{\mathbf 0}

\def\e{{\mathbf{\,e}}}
\def\ep{{\mathbf{\,e}}_p}
\def\eq{{\mathbf{\,e}}_q}

\def\Tr{{\mathrm{Tr}}}
\def\Nm{{\mathrm{Nm}}}

 \def\SS{{\mathbf{S}}}

\def\lcm{{\mathrm{lcm}}}

 \def\0{{\mathbf{0}}}

\def\({\left(}
\def\){\right)}
\def\l|{\left|}
\def\r|{\right|}
\def\fl#1{\left\lfloor#1\right\rfloor}
\def\rf#1{\left\lceil#1\right\rceil}

\def\cl#1{\left\lfloor#1\right\rceil}
\def\sumstar#1{\mathop{\sum\vphantom|^{\!\!*}\,}_{#1}}

\def\mand{\qquad \mbox{and} \qquad}

\def\tblue#1{\begin{color}{blue}{{#1}}\end{color}}

%%%%%%%%%%%%%%%%%%%%%%%%%%%%%%%%%%%%%%%%%%%%%%%%%%%%%%%%
%%%%%%%%%%%%%%%%%%%%%%%%%%%%%%%%%%%%%%%%%%%%%%%%%%%%%%%%
%%%%%%%%%%%%%%%%%%%%%%%%%%%%%%%%%%%%%%%%%%%%%%%%%%%%%%%%
%%%%%%%%%%%%%%%%%%%%%%%%%%%%%%%%%%%%%%%%%%%%%%%%%%%%%%%%

%%%%%%%  END OF STANDARD STUFF %%%%%%%%%

%%%%%%%%%%%%%%%%%%%%%%%%%%%%%%%%%%%%%%%%%%%%%%%%%%%%%%%%
%%%%%%%%%%%%%%%%%%%%%%%%%%%%%%%%%%%%%%%%%%%%%%%%%%%%%%%%
%%%%%%%%%%%%%%%%%%%%%%%%%%%%%%%%%%%%%%%%%%%%%%%%%%%%%%%%
%%%%%%%%%%%%%%%%%%%%%%%%%%%%%%%%%%%%%%%%%%%%%%%%%%%%%%%
%%%%%%%%%%%
%%% Spell

\hyphenation{re-pub-lished}

\mathsurround=1pt

\def\bfdefault{b}

\def \F{{\mathbb F}}
\def \K{{\mathbb K}}
\def \N{{\mathbb N}}
\def \Z{{\mathbb Z}}
\def \P{{\mathbb P}}
\def \Q{{\mathbb Q}}
\def \R{{\mathbb R}}
\def \C{{\mathbb C}}
\def\Fp{\F_p}
\def \fp{\Fp^*}

 \def \xbar{\overline x}

\title[Restricted Weyl sums]{Restricted mean value theorems and 
metric theory of restricted Weyl sums}

\author[C. Chen] {Changhao Chen}

\address{Department of Pure Mathematics, University of New South Wales,
Sydney, NSW 2052, Australia}
\email{changhao.chenm@gmail.com}

 \author[I. E. Shparlinski] {Igor E. Shparlinski}

\address{Department of Pure Mathematics, University of New South Wales,
Sydney, NSW 2052, Australia}
\email{igor.shparlinski@unsw.edu.au}

\begin{abstract} 
We study  an apparently new question about the behaviour of  
Weyl sums on a subset $\cX\subseteq [0,1)^d$ with a natural measure $\mu$  on $\cX$. For certain measure spaces $(\cX, \mu)$ we obtain non-trivial bounds for the mean values of the Weyl sums, and for $\mu$-almost all points of $\cX$ the Weyl sums satisfy the square root cancellation law. Moreover we characterise the size of the exceptional sets  in terms of Hausdorff dimension. 

Finally, we derive variants of the Vinogradov mean value theorem averaging over 
measure spaces $(\cX, \mu)$.
We obtain general results, which we refine for some special spaces $\cX$ such as spheres, moment curves and
line segments.  
 \end{abstract}

\keywords{Weyl sums, mean value theorem, Fourier decay, Hausdorff dimension}
\subjclass[2010]{11L07, 11L15, 28A78}

\maketitle

\tableofcontents

\section{Introduction}

\subsection{Background}

For an integer $d \ge 2$, let $\Tor = (\R/\Z)^d$ be the  $d$-dimensional unit torus. 

For  a vector $\vx = (x_1, \ldots, x_d)\in \Tor$ and $N \in\N$, we consider the exponential   
sums
$$
S_d(\vx; N)=\sum_{n=1}^{N}\e\(x_1 n+\ldots +x_d n^{d} \), 
$$
which  are commonly called {\it Weyl sums\/}, where   throughout  the paper we denote $\e(x)=\exp(2 \pi i x)$.

Weyl sums, introduced by Weyl~\cite{Weyl} as a tool to  investigate  the  distribution 
of fractional parts of real polynomials (see also~\cite{Baker}) also  
appear in a broad spectrum  of other number theoretic problems.
For example,  they play a crucial role in  
estimating the {\it zero-free region} of the {\it Riemann zeta-function} and thus in turn obtaining a sharp form of  the {\it prime number theorem}, see~\cite[Section~8.5]{IwKow},  
in the {\it Waring problem}, see~\cite[Section~20.2]{IwKow}, in bounding 
 short character sums modulo highly composite numbers~\cite[Section~12.6]{IwKow} and many others.

However, despite more than a century long history of estimating such sums, the behaviour of individual sums is not well understood, see~\cite{Brud,BD}.

The following best known bound is a direct implication of the current form of the {\it Vinogradov mean value theorem} from~\cite{BDG, Wool2} (see also~\eqref{eq:MVT} below)   and is given in~\cite[Theorem~5]{Bourg}.   Let $\vx = (x_1, \ldots, x_d)  \in \Tor$ be such that for some $\nu$ with $2 \le \nu\le d $ and some positive integers $a$ and $q$ with $\gcd(a,q)=1$  we have
$$
\left| x_\nu - \frac{a}{q}\right| \le \frac{1}{q^2}. 
$$
Then for any $\varepsilon>0$ there exists a constant $C(\varepsilon)$ such that  
$$
|S_d(\vx; N)| \le C(\varepsilon)N^{1+\varepsilon} \(q^{-1} + N^{-1} + qN^{-\nu}\)^{\frac{1}{d(d-1)}}.
$$

On the other hand, thanks to recent striking results   of  Bourgain, Demeter and Guth~\cite{BDG} (for $d \geqslant 4$) 
and Wooley~\cite{Wool2} (for $d=3$)  (see also~\cite{Wool5}), for  any integer $s \ge 1$, for the 
$2s$-power mean value  of  $S_d(\vx; N)$ we have   
\begin{equation}
\label{eq:MVT}
\int_{\Tor} |S_d(\vx; N)|^{2s}d\vx \leqslant  N^{s +o(1)} +  N^{2s - s(d)+o(1)}, \qquad N \to  \infty,
\end{equation} 
where  
$$
s(d)=d(d+1)/2, 
$$
which is the best possible  form  
of  the Vinogradov mean value theorem. 
In particular 
$$
\int_{\Tor} |S_d(\vx; N)|^{2s(d)}d\vx \leqslant  N^{s(d)+o(1)}, \qquad N \to  \infty. 
$$

\subsection{Previous results and questions}

We first outline  some results concerning  the metric theory  of Weyl sums on $\T_d$.
The metric theory means that we study the properties of Weyl sums  which hold 
for almost all points with respect to the Lebesgue or some other measures. Moreover one also characterise the size of the exceptional sets (outside of the almost all) in terms of Hausdorff dimension.   

We remark that the topic here, the metric theory of Weyl sums, is not the same as the topics in
the {\it metric theory of numbers\/}, see, for instance,~\cite{Harm}. 
However they are certainly related to each other.

We say that some property holds for {\it almost all $\vx \in \Tor$\/}  if it holds for a set 
 $\cX \subseteq \Tor$ of  Lebesgue measure  $\lambda(\cX) = 1$.  

For $d=2$, Fedotov and Klopp~\cite[Theorem~0.1]{FK} 
give  the following optimal lower and upper bounds. 
Suppose that  $\{g(n)\}_{n=1}^{\infty}$ is a non-decreasing sequence of positive numbers. Then for almost all  $\vx\in \T_2$  one has 
$$
\uplim_{N\to  \infty} \frac{  \left |S_2(\vx; N)\right |}{\sqrt{N} g(\ln  N)}<\infty \quad  \Longleftrightarrow \quad 
 \sum_{n=1}^{\infty} \frac{1}{ g(n)^{6}} <\infty.
$$

For $d\ge 3$, the authors~\cite[Appendix~A]{ChSh-On} have
shown that for almost all $\vx\in \T_d$ 
\begin{equation}
\label{eq:1/2}
|S_{d}(\vx; N)|\le N^{1/2+o(1)}, \qquad N\rightarrow \infty.
\end{equation}
One may conjecture that this is the best possible bound, see~\cite[Conjecture~1.1]{ChSh-On}.

Let $\va=(a_n)_{n=1}^{\infty}$ be a sequence of complex weights and denote  
\begin{equation}
\label{eq:S_ad}
S_{\va, d}(\vx; N)=\sum_{n=1}^{N}a_n \e(x_1n+\ldots+x_d n^d).
\end{equation}
Extending~~\eqref{eq:1/2},  the authors~\cite[Corollary~2.2]{ChSh-New} have shown  that for any complex weights $\va=(a_n)_{n=1}^{\infty}$  with $a_n=n^{o(1)}$ one has that for almost all $\vx\in \T_d$, 
\begin{equation}
\label{eq:1/2-weight}
|S_{\va, d}(\vx; N)|\le N^{1/2+o(1)}, \qquad N\rightarrow \infty. 
\end{equation}   

From the almost all results in~\eqref{eq:1/2} and~\eqref{eq:1/2-weight} one may ask  how ``large" are the exceptional sets. For this purpose we introduce following notation.  For  $0<\alpha<1$ and integer $d\ge 2$, we consider the set  
\begin{equation} 
\label{eq:exceptional}
\begin{split}
\cE_{\va, d,\alpha}=\{\vx\in \Tor:&~|S_{\va, d}( \vx; N)|\ge N^{\alpha}\\
 & \qquad  \text{ for infinity many }N\in \N\},
\end{split}
\end{equation} 
and call it the {\it exceptional set\/}.  
If $\va=\ve=(1)_{n=1}^{\infty}$ we just write  
$$
\cE_{d, \alpha}=\cE_{\ve, d, \alpha}.
$$
Using this notation we may say that for any $1/2<\alpha<1$ the set $\cE_{d, \alpha}$ has zero Lebesgue measure.

For sets of  Lebesgue measure zero, it is common to use  the {\it Hausdorff dimension\/} to describe their size; for the properties of the Hausdorff dimension and its applications we refer to~\cite{Falconer}.  We recall that for $\cU \subseteq \R^{d}$
$$
\diam \cU = \sup\{\| \vu-\vv\|:~\vu,\vv \in \cU\}, 
$$ 
where $\|\mathbf{w}\|$ is the Euclidean norm in $\R^{d}$. 

\begin{definition} 
The  Hausdorff dimension of a set $\cA\subseteq \R^{d}$ is defined as 
\begin{align*}
\dim \cA=\inf\Bigl\{s>0:~\forall &\,  \eps>0,~\exists \, \{ \cU_i \}_{i=1}^{\infty}, \ \cU_i \subseteq \R^{d},\\
&  \text{such that } \cA\subseteq \bigcup_{i=1}^{\infty} \cU_i \text{ and } \sum_{i=1}^{\infty}\(\diam\cU_i\)^{s}<\eps \Bigr\}.
\end{align*}
\end{definition}

We note that the authors~\cite{ChSh-On} have obtained a  lower bound of the Hausdorff dimension of $\cE_{d, \alpha}$. Among other things, it is shown in~\cite{ChSh-On} for any $\alpha\in (0,1)$ one has 
$$
\dim \cE_{d, \alpha}\ge \ell(d, \alpha)
$$
with some explicit function $\ell(d, \alpha)>0$.

Furthermore, the authors~\cite{ChSh-Hausdorff} have  given  a non-trivial  upper bound for $\cE_{d, \alpha}$. More precisely, we have 
\begin{equation}
\label{eq:upper-u}
\dim \cE_{d, \alpha}\le \mathfrak{u}(d, \alpha),
\end{equation}
where 
\begin{equation}
\label{eq:def u} 
\mathfrak{u}(d, \alpha)=\min_{k=0, \ldots, d-1} \frac{(2d^{2}+4d)(1-\alpha)+k(k+1)}{4-2\alpha+2k}.
\end{equation}
The bound~\eqref{eq:def u} has some  interesting implications. For instance for any $\alpha \in (1/2, 1)$ we have 
$$
\dim \cE_{d, \alpha}<d.
$$
Moreover, if $\alpha\rightarrow 1$ then 
$u(d, \alpha)\rightarrow 0$. 
Indeed it is expected that if $\alpha$ becomes large then the set $\cE_{d, \alpha}$ becomes small. We refer to~\cite{ChSh-Hausdorff} for more details.

Furthermore,  as a counterpart to~\eqref{eq:upper-u}, we remark that we expect $\dim \cE_{d, \alpha} = d$ for $\alpha\in (0,1/2]$, see also~\cite{ChSh-On, ChSh-Hausdorff}. On the other hand, we do not have any 
plausible conjecture about the exact behaviour of  $\dim \cE_{d, \alpha}$ for $\alpha\in (1/2, 1)$.

\subsection{Average values and the metric properties of restricted Weyl sums}  
The goal here is to  investigate the Weyl sums over some subset $\cX\subseteq \T_d$ with some natural measure on $\cX$. Restricted type 
Such restriction problem arise  in many other areas of mathematics. These include  {\it Diophantine approximation on manifolds}, see~\cite{BVVZ}, {\it Fourier restriction problems}, see~\cite[Chapter~19]{Mattila2015}, the {\it restricted families of projections\/}, see~\cite[Section~5.4]{Mattila2015}, and {\it discrete Fourier restriction\/},  see~\cite{HH1, HH2, HuWo, LD, Wool4}.   

We recall that the {\it support} $\spt\mu$ of a measure $\mu$ on $\R^d$ is the smallest closed set $\cX$ such that $\mu(\R^d\setminus \cX)=0$.

We consider the following very general question. We remark that the below  set $\cX$ can be some fractal set.

\begin{question}  
\label{quest:ABC}
Let $\mu$ be a Radon measure on $\T_d$ with $\spt \mu =\cX$. What can we say about   
\begin{align*}
&A.~\text{mean value bounds:} \quad \int_{\cX} |S_{\va, d}(\vx; N)|^{\rho} d\mu(\vx);\\
&B.~\text{typical bounds:} \quad  \sup \left\{\alpha\in [0,1]:~ \mu(\cE_{\va, d, \alpha}\cap \cX)=0 \right\};\\
&C.~\text{exceptional sets:} \quad  \sup \left\{\alpha\in [0,1]:~\dim (\cE_{\va, d,\alpha}\cap \cX)=\dim \cX\right\}; 
\end{align*} 
provided the measure $\mu$ has some natural geometric, algebraic or combinatorial structure? 
\end{question}

For example, the restriction results from~\cite{HH1, HH2, HuWo, LD, Wool4} address some instances 
of Question~\ref{quest:ABC}~A in the case when $\cX$ is hyperplane formed by vectors $\vx \in \Tor$
with some components fixed (often to zero). 

Furthermore, there are other types of the metric theory of Weyl sums related to Question~\ref{quest:ABC}~B. 
More precisely,  let 
$$
\{\varphi_1(T), \varphi_2(T), \ldots, \varphi_d(T)\}=\{T, T^2, \ldots, T^d\}.
$$
Note that here the order of $\varphi_1, \ldots, \varphi_d$ is not specified. 
The works of~\cite{ChSh-New, FlFo, Wool3} imply that  for almost all $(x_1, \ldots, x_k)\in \T_k$ (with respect to the $k$-dimensional Lebesgue measure)  one has 
$$
\sup_{(y_1, \ldots, y_{d-k})} \left |\sum_{n=1}^{N}\e \left (\sum_{j=1}^{k} x_j \varphi_j(n)+\sum_{j=k+1}^{d}y_j \varphi_j(n)\right ) \right |\le N^{1/2+\delta(d, k)+o(1)}
$$
as $N\rightarrow \infty$,  for some explicit values $0<\delta(d, k)<1$;  we refer to~\cite{ChSh-New} for more details and the 
currently best know results in general.   We note that recently   special forms of such  bounds, using a very different approach, 
have been given in~\cite{BPPSV, ES}, together with applications
to some partial differential equations.

Here we are interested in general spaces $\cX$ and measures $\mu$ and also in 
some special cases such as {\it spheres\/}~\eqref{eq:Sphere}, 
{\it moment curves\/}~\eqref{eq:moment-curve}  and 
{\it line segments\/}~\eqref{eq:segment}.  

\subsection{Notation and conventions}
Throughout the paper, the notation $U = O(V)$, 
$U \ll V$ and $ V\gg U$  are equivalent to $|U|\leqslant c V$ for some positive constant $c$, 
which throughout the paper may depend on the degree $d$ and occasionally on the small real positive 
parameter $\varepsilon$.   

For any quantity $V> 1$ we write $U = V^{o(1)}$ (as $V \to \infty$) to indicate a function of $V$ which 
satisfies $|U| \le V^\eps$ for any $\eps> 0$, provided $V$ is large enough. One additional advantage 
of using $V^{o(1)}$ is that it absorbs $\log V$ and other similar quantities without changing  the whole 
expression.

 We use $\# \cS$ to denote the cardinality of a finite set $\cS$.

 We always identify $\Tor$ with half-open unit cube $[0, 1)^d$, in particular we
 naturally associate the Euclidean norm  $\|\vx\|$ with points $\vx \in \Tor$. 
 
 We always suppose that $d\ge 2$. 
 
For a measure  $\mu$ on $\cX$ we say that some property holds for {\it $\mu$-almost all $\vx\in \cX$\/}  if it holds 
 for a set $\cA\subseteq \cX$ such that $\mu(\cX\setminus\cA)=0$.
 
For each $q>0$ denote  
\begin{equation}
\label{eq:sq}
s(q)=q(q+1)/2.
\end{equation}

\section{Main results}

\subsection{General sets}

We consider Radon measure $\mu$ on $\T_d$ which implies that $\mu$ is a Borel measure and $\mu(\T_d)<\infty$, 
see~\cite[Chapter~1]{EG} for the general measure theory.

The Fourier transform of a Radon measure  $\mu$ on $\R^d$ is defined as 
$$
\widehat{\mu}(\bxi)=\int_{\R^d} \e(-\vx \cdot\bxi) d\mu(\vx), \quad \bxi\in \R^d,
$$
where, as usual, the dot product $\vx \cdot\bxi$  of vectors $\vx=(x_1,\ldots, x_d)$ and   $\bxi=(\xi_1, \ldots, \xi_d)$ is given by
$$
\vx\cdot \bxi= x_1\xi_1+\ldots +x_d \xi_d, 
$$
see~\cite[Chapter~3]{Mattila2015} for  the basic properties of the Fourier transform of measures.

We consider classes of  Radon measures  $\mu$  on $\Tor$ such that 
\begin{equation}
\label{eq:decay}
\left |\widehat{\mu}(\bxi)\right| \ll \(1+\|\bxi\|\)^{-\sigma}, \quad \forall \bxi \in \R^{d},
\end{equation}
for some $\sigma> 0$.

We are mostly interested in sequences $\va=(a_n)_{n=1}^{\infty}$  of complex weights such that 
\begin{equation}
\label{eq:single a}
a_n= n^{o(1)}, \qquad n\rightarrow \infty.
\end{equation}

\begin{theorem}
\label{thm:MVT}
Let  $\mu$ be a Radon measure on $\Tor$   such that~\eqref{eq:decay} holds
for some $\sigma \ge 1/d$, and let  $\va=(a_n)_{n=1}^{\infty}$ satisfy condition~\eqref{eq:single a}. Then      
$$
\int_{\Tor}\left|S_{\va, d}(\vx; N)\right| ^{2} d\mu(\vx)\le N^{1+o(1)}.
$$
\end{theorem}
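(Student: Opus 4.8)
The plan is to expand the square, integrate term by term, and recognise the resulting integrals as values of the Fourier transform $\widehat{\mu}$ at the integer frequency vectors
$$
\bell_{m,n}=\(m-n,\,m^2-n^2,\,\ldots,\,m^d-n^d\),\qquad 1\le m,n\le N.
$$
Concretely, writing $|S_{\va,d}(\vx;N)|^2=\sum_{m,n=1}^{N}a_m\overline{a_n}\,\e\(\vx\cdot\bell_{m,n}\)$ and using Fubini together with the definition of $\widehat{\mu}$ gives
$$
\int_{\Tor}\left|S_{\va,d}(\vx;N)\right|^2 d\mu(\vx)=\sum_{m,n=1}^{N}a_m\overline{a_n}\,\widehat{\mu}\(-\bell_{m,n}\).
$$
Since the weights satisfy~\eqref{eq:single a}, one has $|a_m\overline{a_n}|\le N^{o(1)}$ uniformly for $m,n\le N$, so it suffices to bound $\sum_{m,n}\left|\widehat{\mu}(-\bell_{m,n})\right|$ by $N^{1+o(1)}$.

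Next I would split the sum into the diagonal $m=n$ and the off-diagonal $m\ne n$. On the diagonal $\bell_{m,n}=\0$ and $\widehat{\mu}(\0)=\mu(\Tor)<\infty$, so these $N$ terms contribute $O(N)$. For the off-diagonal terms I would apply the decay hypothesis~\eqref{eq:decay}; since $\|{-\bell}\|=\|\bell\|$ and, by symmetry, the full off-diagonal sum is twice its restriction to $n<m$ (the constant factor being harmless), the point is that the last coordinate dominates the norm: for $1\le n<m\le N$ one has $\|\bell_{m,n}\|\ge m^d-n^d\ge 1$, whence
$$
\left|\widehat{\mu}\(-\bell_{m,n}\)\right|\ll \(m^d-n^d\)^{-\sigma}.
$$
The problem thus reduces to showing $\sum_{1\le n<m\le N}\(m^d-n^d\)^{-\sigma}\ll N$ whenever $\sigma\ge 1/d$.

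To estimate this sum I would substitute $m=n+k$ and use convexity (equivalently the mean value theorem) to get the lower bound $(n+k)^d-n^d\ge d\,n^{d-1}k$, which factorises the double sum:
$$
\sum_{1\le n<m\le N}\(m^d-n^d\)^{-\sigma}\ll\(\sum_{n=1}^{N}n^{-\sigma(d-1)}\)\(\sum_{k=1}^{N}k^{-\sigma}\).
$$
At the critical exponent $\sigma=1/d$ the exponents $\sigma(d-1)=(d-1)/d$ and $\sigma=1/d$ are both strictly less than $1$, so the two factors are $\asymp N^{1/d}$ and $\asymp N^{1-1/d}$ respectively, and their product is $\asymp N$; for $\sigma>1/d$ the same factorisation gives a bound $\ll N$ (indeed $o(N)$). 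Combining the diagonal and off-diagonal contributions and restoring the factor $N^{o(1)}$ from the weights yields the claimed bound $N^{1+o(1)}$.

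I expect the only delicate point to be the summation at the boundary exponent $\sigma=1/d$: there neither of the two one-dimensional sums converges, and one must track that their product is exactly of order $N$ rather than larger. The reduction to the single coordinate $m^d-n^d$ is precisely what makes the critical case $\sigma=1/d$ work out cleanly, since among the coordinates $m^j-n^j$ this one grows fastest; using a cruder lower bound for $\|\bell_{m,n}\|$ would force a larger value of $\sigma$ and lose the sharp threshold.
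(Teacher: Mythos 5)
Your proposal is correct and follows essentially the same route as the paper's own proof: expand the square, identify the off-diagonal terms as $\widehat{\mu}$ evaluated at $(n-m,\ldots,n^d-m^d)$, bound the norm from below by the top coordinate $|n^d-m^d|\gg k\,n^{d-1}$ after the substitution $m=n+k$, and factor the resulting double sum into $\bigl(1+N^{1-(d-1)\sigma}\bigr)\bigl(1+N^{1-\sigma}\bigr)\ll N$ for $\sigma\ge 1/d$. The only cosmetic difference is that you track the critical case $\sigma=1/d$ explicitly via the two partial sums $\asymp N^{1/d}$ and $\asymp N^{1-1/d}$, whereas the paper writes the same bound in the combined form above; the content is identical.
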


For the case $\sigma>1/d$ the bound in Theorem~\ref{thm:MVT}
 is essentially optimal, see Remark~\ref{rem:optimal} below.  Moreover, it  is interesting to know whether Theorem~\ref{thm:MVT} still holds under the weaker condition that $\sigma>0$.

We remark that there are many measures  which satisfy the condition~\eqref{eq:decay}. These include  some  {\it surface measures\/} (for example, of spheres and paraboloids), see~\cite[Section~14.3]{Mattila2015}; some {\it fractal measures\/} (for example, 
natural measures on the trajectories of Bownian motion, see~\cite[Chapter~12]{Mattila2015} and some random Cantor measures, see~\cite{B}). Thus  Theorem~\ref{thm:MVT} claims these measures admit the  square mean value theorems. 

For the higher order mean value bounds we have Theorem~\ref{thm:MVT-higher-box} and~\ref{thm:MVT-higher} below, which depend
on the rate of decay of Fourier coefficients   and on boundedness of their $\mathsf L^1$-norm, respectively.

For $x\in \R$ we define  $\cl{x}$ as the nearest integer of the number $x$ if $x-1/2 \not \in \Z$ and 
also set  $\cl{x} = x+1/2$  if   $x-1/2 \in \Z$.

\begin{theorem}  
\label{thm:MVT-higher-box} 
Let  $\mu$ be a Radon measure on $\Tor$ such that~\eqref{eq:decay} holds  for some positive $\sigma\le d$
 and let  $\va=(a_n)_{n=1}^{\infty}$ satisfy the condition~\eqref{eq:single a}. Then      
$$
\int_{\Tor}\left|S_{\va, d}(\vx; N)\right|^{2s(d)} d\mu(\vx)\le N^{s(d) + s(\ell) +\ell(d-\sigma-\ell)+o(1)},
$$
where $\ell =\cl{d-\sigma+1/2}$ and $s(\ell)$ is given by~\eqref{eq:sq}. 
\end{theorem}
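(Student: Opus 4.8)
The plan is to expand the $2s(d)$-th power, integrate termwise against $\mu$, and reduce everything to a weighted count of solutions of a Vinogradov system. Writing $s=s(d)$ for brevity and expanding, I get
$$
\int_{\Tor}\left|S_{\va,d}(\vx;N)\right|^{2s}d\mu(\vx)=\sum_{\mathbf{n},\mathbf{m}}\Biggl(\prod_{i=1}^{s}a_{n_i}\Biggr)\Biggl(\prod_{j=1}^{s}\overline{a_{m_j}}\Biggr)\widehat{\mu}(-\vh),
$$
where $\mathbf{n}=(n_1,\dots,n_s)$ and $\mathbf{m}=(m_1,\dots,m_s)$ run over $\{1,\dots,N\}^{s}$ and $\vh=(h_1,\dots,h_d)$ has $k$-th coordinate $h_k=\sum_{i=1}^{s}n_i^{k}-\sum_{j=1}^{s}m_j^{k}$. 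Using~\eqref{eq:single a} to bound the product of the $a$'s by $N^{o(1)}$ and the decay hypothesis~\eqref{eq:decay} to bound $|\widehat{\mu}(-\vh)|\ll(1+\|\vh\|)^{-\sigma}$, I would reduce the problem to estimating
$$
\Sigma=\sum_{\vh}J(N;\vh)\,(1+\|\vh\|)^{-\sigma},
$$
where $J(N;\vh)$ is the number of $(\mathbf{n},\mathbf{m})\in\{1,\dots,N\}^{2s}$ producing the difference vector $\vh$, the sum being over the finitely many admissible $\vh$, which satisfy $|h_k|\le sN^{k}$.

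Two ingredients then drive the estimate. First, a uniform diagonal bound: since $J(N;\vh)=\int_{\Tor}|S_d(\vx;N)|^{2s}\e(-\vx\cdot\vh)\,d\vx$ with nonnegative integrand, one has $J(N;\vh)\le J(N;\v0)$ for every $\vh$, and the full Vinogradov mean value theorem~\eqref{eq:MVT} at the critical exponent $s=s(d)$ gives $J(N;\v0)\le N^{s(d)+o(1)}$, hence $J(N;\vh)\le N^{s(d)+o(1)}$ uniformly in $\vh$. Second, a splitting of the decay: for any $\sigma_1,\dots,\sigma_d\ge0$ with $\sigma_1+\dots+\sigma_d=\sigma$, and since $1+\|\vh\|\ge1+|h_k|$ for each $k$,
$$
(1+\|\vh\|)^{-\sigma}\le\prod_{k=1}^{d}(1+|h_k|)^{-\sigma_k}.
$$
Combining these and summing each coordinate separately, using $\sum_{|h|\le sN^{k}}(1+|h|)^{-\sigma_k}\ll N^{k\max\{1-\sigma_k,0\}+o(1)}$, yields
$$
\Sigma\ll N^{s(d)+o(1)}\prod_{k=1}^{d}N^{k\max\{1-\sigma_k,0\}+o(1)}=N^{s(d)+\sum_{k=1}^{d}k\max\{1-\sigma_k,0\}+o(1)}.
$$

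It remains to choose the split $(\sigma_k)$ optimally, i.e.\ to minimise $\sum_k k\max\{1-\sigma_k,0\}$. Since raising any $\sigma_k$ beyond $1$ brings no gain, this is a fractional knapsack problem, and one should assign weight $1$ to the coordinates of largest index first. Concretely I would set $\sigma_k=1$ for $k>\ell$, put the remaining mass $\sigma-(d-\ell)$ on $k=\ell$, and take $\sigma_k=0$ for $k<\ell$, where $\ell=\cl{d-\sigma+1/2}$; the hypothesis $\sigma\le d$ guarantees a valid distribution with $\sigma_\ell\in(0,1]$ and $1\le\ell\le d$. A direct computation then gives $\sum_k k\max\{1-\sigma_k,0\}=s(\ell-1)+\ell(d-\sigma-\ell+1)=s(\ell)+\ell(d-\sigma-\ell)$, which is exactly the claimed exponent once combined with $\int_{\Tor}|S_{\va,d}(\vx;N)|^{2s(d)}d\mu\ll N^{o(1)}\Sigma$. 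The only genuinely delicate point is checking that the greedy threshold coincides with the stated $\ell=\cl{d-\sigma+1/2}$ and that the boundary (tie) cases of the rounding convention leave the resulting exponent unchanged; everything else is routine bookkeeping, and the one deep input is the Vinogradov mean value theorem~\eqref{eq:MVT} used for the uniform diagonal bound.
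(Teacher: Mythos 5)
Your proposal is correct, and its skeleton coincides with the paper's: the same expansion of the $2s(d)$-th power, the same reduction to $\sum_{\bxi}J_{\va,N}(\bxi)|\widehat\mu(\bxi)|$, and the same uniform diagonal bound $J_{\va,N}(\bxi)\le N^{s(d)+o(1)}$ via positivity and the Vinogradov mean value theorem~\eqref{eq:MVT}. Where you genuinely diverge is in estimating the remaining lattice sum $\varXi_N=\sum_{\bxi\in\cD_N}(1+\|\bxi\|)^{-\sigma}$. The paper decomposes dyadically in $\|\bxi\|$, counts points of $\cD_N$ in each shell $H\le\|\bxi\|<2H$ by $N^{s(i)}H^{d-i}$ when $N^i\le H<N^{i+1}$, and then runs a two-case optimisation over $i$ (according to whether $i\le d-\sigma$ or $i>d-\sigma$), checking at the end that the two candidate maxima agree at $\ell=\cl{d-\sigma+1/2}$. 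You instead split the decay factor coordinate-wise via $(1+\|\bxi\|)^{-\sigma}\le\prod_k(1+|\xi_k|)^{-\sigma_k}$ with $\sum_k\sigma_k=\sigma$, sum each coordinate independently to get $\prod_k N^{k\max\{1-\sigma_k,0\}+o(1)}$, and solve the resulting fractional-knapsack optimisation greedily; your identity $s(\ell-1)+\ell(d-\sigma-\ell+1)=s(\ell)+\ell(d-\sigma-\ell)$ is correct, your choice of split is admissible because $\cl{y}\in[y-1/2,y+1/2]$ forces $d-\ell\le\sigma\le d-\ell+1$, and the rounding/tie cases check out (e.g.\ for integer $\sigma$ both expressions reduce to $s(d-\sigma)$). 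Your route buys a cleaner optimisation: a single convex problem with no case split and no dyadic bookkeeping, at the cost of the (easy) verification that the greedy threshold is the stated $\ell$. The paper's dyadic route keeps the geometry of the level sets of $\|\bxi\|$ inside the anisotropic box $\cD_N$ more visible, which is the feature that gets reused in the proof of Theorem~\ref{thm:MVT-Mom}. Both arguments yield the identical exponent, so the proposal is a valid alternative proof.
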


For the natural   measure  $\muS$ on the $d$-dimensional sphere 
\begin{equation}
\begin{split} 
\label{eq:Sphere} 
\Sp^{d-1} =\{\vt & = (t_1, \ldots, t_d)\in \R^d:~\\
&\quad (t_1-1/2)^2+\ldots+(t_d-1/2)^2 = 1/4\}  
\end{split}
\end{equation}  
centred at $(1/2, \ldots, 1/2)$ and of radius $1/2$,
we can take $\sigma = (d-1)/2$ in~\eqref{eq:decay}, see~\cite[Equation~(3.42)]{Mattila2015}.
That is, we have
\begin{equation}
\label{eq:muS}
\widehat{\muS}(\bxi)\ll (1+\|\bxi\|)^{-(d-1)/2}, 
\end{equation}
for any $ \bxi\in \R^d$. 
Substituting in Theorem~\ref{thm:MVT-higher-box} we see the following.

\begin{example} 
\label{exam:MVT S M}  
Let $a_n=n^{o(1)}$. Then for  the sphere  we have  
$$
\ell=\cl{d - (d-1)/2 + 1/2} = \cl{d/2  + 1}.
$$
Hence 
\begin{itemize}
 \item[(i)] if $d$ is even then  $\ell=    d/2+1$, thus
 \begin{align*}
  \int_{\Sp^{d-1}}   \left|\sum_{n=1}^{N} a_n\e(t_1n+\ldots +t_d n^d)\right|^{2s(d)}& d\muS(\vt)\\
&  \le N^{s(d)+(d+2)^2/8+o(1)};
\end{align*}   

\item[(ii)] if $d$ is odd then   $\ell=   d+3/2$, thus
\begin{align*}
  \int_{\Sp^{d-1}}   \left|\sum_{n=1}^{N} a_n\e(t_1n+\ldots +t_d n^d)\right|^{2s(d)} &d\muS(\vt)\\
&    \le N^{s(d)+(d+3)(d+1)/8+o(1)}. 
\end{align*}   
 \end{itemize} 
 \end{example}

We remark that  for
the natural   measure  $\muM$ on the moment curve 
\begin{equation}
\label{eq:moment-curve} 
\Gamma=\{(t, \ldots, t^{d}):~t\in [0,1]\}, 
\end{equation} 
 that is,  
\begin{equation}
\label{eq:moment-measure}
\muM \(\{(t, \ldots, t^{d}):~t\in [a,b]\}\)=b-a,
\end{equation}
by Lemma~\ref{lem:moment-decay} below we can take $\sigma = 1/d$  in~\eqref{eq:decay}, that is,
\begin{equation}
\label{eq:muM}
 \widehat{\muM}(\bxi)\ll  \(1+\|\bxi\|\)^{-1/d},
\end{equation}
for any $ \bxi\in \R^d$.  Therefore, for all $d\ge 2$ we have $\ell=\cl{d-1/d+1/2} = d$, thus Theorem~\ref{thm:MVT-higher-box} implies 
\begin{equation} 
\label{eq:MVT-Mon-Triv}
 \int_0^1  \left|\sum_{n=1}^{N} \e(tn+\ldots +t^d n^d)\right|^{2s(d)} dt
\le N^{2s(d) -1+o(1)},
\end{equation}   
which is   the same bound as one can instantly derive from Theorem~\ref{thm:MVT}. 
In fact one cannot improve the bound~\eqref{eq:MVT-Mon-Triv} as it is easy to  
see that for $0\le t \le 0.1 N$ we have  
$$
\sum_{n=1}^{N} \e(tn+\ldots +t^d n^d) \gg N
$$
and hence   
\begin{align*}
 \int_0^1&   \left|\sum_{n=1}^{N} \e(tn+\ldots +t^d n^d)\right|^{2s(d)} dt \\
 & \qquad \ge 
 \int_0^{0.1 N}  \left|\sum_{n=1}^{N} \e(tn+\ldots +t^d n^d)\right|^{2s(d)} dt\gg N^{2s(d) -1}.
\end{align*}
However in  Theorem~\ref{thm:MVT-Mom} below we use a different argument and  obtain a much 
stronger bound on the modification of  the above integral in~\eqref{eq:MVT-Mon-Triv} over the interval $[\delta, 1]$ for any positive $\delta$. Thus  for this case, we improve the bound in Theorem~\ref{thm:MVT-higher-box}.

Next, we show that   the proof of Theorem~\ref{thm:MVT-higher-box} implies the following  result.

\begin{theorem}
\label{thm:MVT-higher}
Let  $\mu$ be a Radon measure on $\Tor$ such that
\begin{equation}
\label{eq:L1}
\sum_{\xi\in \Z^d}\left |\widehat{\mu}(\bxi)\right| \ll 1,
\end{equation}
and  let  $\va=(a_n)_{n=1}^{\infty}$ satisfy the  condition~\eqref{eq:single a}. Then 
$$
\int_{\Tor} \left|S_{\va, d}(\vx; N)\right| ^{2s(d)} d\mu(\vx)\le N^{s(d)+o(1)}.
$$
\end{theorem}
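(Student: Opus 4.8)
The plan is to expand the $2s(d)$-th power of the Weyl sum into a finite exponential sum indexed by $\Z^d$, integrate term by term against $\mu$, and recognise the resulting integrals as Fourier coefficients of $\mu$. Writing $s=s(d)$ and multiplying out
$$
\left|S_{\va, d}(\vx; N)\right|^{2s} = S_{\va, d}(\vx; N)^{s}\,\overline{S_{\va, d}(\vx; N)}^{\,s},
$$
I group the $2s$ summation variables according to the differences of their power sums. Setting, for $j=1,\ldots,d$,
$$
k_j=\sum_{i=1}^{s}n_i^{j}-\sum_{i=1}^{s}m_i^{j},\qquad \vk=(k_1,\ldots,k_d)\in\Z^{d},
$$
one obtains a representation
$$
\left|S_{\va, d}(\vx; N)\right|^{2s}=\sum_{\vk\in\Z^{d}}c_{\vk}\,\e(\vx\cdot\vk),
$$
where $c_{\vk}$ collects the products $a_{n_1}\cdots a_{n_s}\overline{a_{m_1}}\cdots\overline{a_{m_s}}$ over all tuples realising the given $\vk$. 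Only finitely many $c_{\vk}$ are nonzero, since $|k_j|\le sN^{d}$. Integrating against $\mu$ and using the definition of $\widehat{\mu}$ gives
$$
\int_{\Tor}\left|S_{\va, d}(\vx; N)\right|^{2s}d\mu(\vx)=\sum_{\vk\in\Z^{d}}c_{\vk}\,\widehat{\mu}(-\vk).
$$

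The decisive step is a bound on the coefficients $c_{\vk}$ that is \emph{uniform} in $\vk$. First, condition~\eqref{eq:single a} gives $|a_{n_1}\cdots a_{n_s}\overline{a_{m_1}}\cdots\overline{a_{m_s}}|\le N^{o(1)}$, so $|c_{\vk}|$ is at most $N^{o(1)}$ times the number $I(\vk)$ of integer tuples $1\le n_i,m_i\le N$ solving the system of power-sum differences. Writing $r(\mathbf{l})$ for the number of $(n_1,\ldots,n_s)$ with prescribed power sums $\mathbf{l}\in\Z^{d}$, one has $I(\vk)=\sum_{\mathbf{l}}r(\mathbf{l})\,r(\mathbf{l}-\vk)$, and Cauchy--Schwarz yields
$$
I(\vk)\le\sum_{\mathbf{l}}r(\mathbf{l})^{2}=I(\0),
$$
so the representation count is maximised at $\vk=\0$. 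Finally $I(\0)$ is exactly the Vinogradov mean value $\int_{\Tor}|S_d(\vx;N)|^{2s(d)}\,d\vx$, which by~\eqref{eq:MVT} at the critical exponent $s=s(d)$ equals $N^{s(d)+o(1)}$. Hence $|c_{\vk}|\le N^{s(d)+o(1)}$ uniformly in $\vk$.

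With this in hand the conclusion is immediate: pulling out the uniform coefficient bound and invoking the hypothesis~\eqref{eq:L1},
$$
\left|\int_{\Tor}\left|S_{\va, d}(\vx; N)\right|^{2s(d)}d\mu(\vx)\right|\le\max_{\vk}|c_{\vk}|\sum_{\vk\in\Z^{d}}|\widehat{\mu}(-\vk)|\le N^{s(d)+o(1)}\sum_{\bxi\in\Z^{d}}|\widehat{\mu}(\bxi)|\ll N^{s(d)+o(1)},
$$
where I used that $-\vk$ ranges over $\Z^{d}$ as $\vk$ does. This is exactly where the argument simplifies relative to Theorem~\ref{thm:MVT-higher-box}: under the mere polynomial decay~\eqref{eq:decay} one must split the sum over $\vk$ by dyadic size and balance the factor $(1+\|\vk\|)^{-\sigma}$ against sharper bounds for $c_{\vk}$ at intermediate moments, which is what produces the extra exponent $s(\ell)+\ell(d-\sigma-\ell)$; the $\mathsf L^1$ summability in~\eqref{eq:L1} removes the need for any such splitting and lets one factor out a single uniform bound.

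The main obstacle, accordingly, is the uniform coefficient estimate $|c_{\vk}|\le N^{s(d)+o(1)}$, whose heart is the Cauchy--Schwarz observation that the diagonal count $I(\0)$ dominates $I(\vk)$ for every $\vk$; once this reduction to the critical-exponent Vinogradov mean value theorem is in place, the remaining steps are routine. The only points requiring minor care are the legitimacy of interchanging the finite sum with the integral (which is harmless, as the expansion is a finite trigonometric polynomial) and the bookkeeping of the weights, both of which are controlled by~\eqref{eq:single a}.
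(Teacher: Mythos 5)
Your proposal is correct and follows essentially the same route as the paper: expand the $2s(d)$-th power into a finite trigonometric polynomial, bound its coefficients uniformly by the diagonal count $J_N(\0)\le N^{s(d)+o(1)}$ from the Vinogradov mean value theorem, and then sum against the Fourier coefficients of $\mu$ using the $\mathsf L^1$ hypothesis~\eqref{eq:L1}. The only cosmetic difference is that you prove the key uniform bound $I(\vk)\le I(\0)$ by Cauchy--Schwarz on the representation counts $r(\mathbf{l})$, whereas the paper obtains the same inequality by taking absolute values in the Fourier-integral representation of $J_N(\bxi)$; the two observations are equivalent.
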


Theorem~\ref{thm:almostall}  below claims that 
we can derive the ``almost all" individual  bounds by using mean value theorems.  For this purpose  we now need to  consider the family of sums similar to $S_{\va, d}(\vx; N)$ in~\eqref{eq:S_ad}, but in the following the 
weights change with $N$.  More precisely,  let $a(N, n)$ be a ``double sequence" of complex weights such that  
\begin{equation} 
\label{eq:double a}
\max_{n=1, \ldots, N}|a(N, n)|\le N^{o(1)}, \quad N\rightarrow \infty.
\end{equation}
We only consider the values $a(N, n)$ with  $1\le n\le N$ throughout the paper.

\begin{theorem}\label{thm:almostall}
Let  $\mu$ be a Radon measure on $\T_d$ and let $\rho>0$ and $0<\vartheta<1$ be two constants such that for any 
double sequence $a(N, n)$ with the condition~\eqref{eq:double a} one has 
$$
\int_{\Tor} \left|\sum_{n=1}^{N} a(N, n)\e(x_1n+\ldots +x_d n^d)\right|^{\rho} d\mu(\vx)\le N^{\vartheta\rho+o(1)}, 
$$
then for any complex  sequence $a_n=n^{o(1)}$ and for $\mu$-almost all $\vx\in \spt \mu$ we have 
\begin{equation}
\label{eq:S_ad-1/2}
\left|S_{\va, d}(\vx; N)\right| \le N^{\vartheta+o(1)}, \quad N\rightarrow \infty.
\end{equation}
\end{theorem}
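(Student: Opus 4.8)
The plan is to deduce the pointwise bound \eqref{eq:S_ad-1/2} from the $L^\rho$ hypothesis by combining Markov's inequality with the Borel--Cantelli lemma. The essential point is that a direct application to the individual sums $S_{\va,d}(\vx;N)$ only controls them along a lacunary sequence, after which the gaps must be filled; a purely trivial treatment of the gaps loses a positive amount in the exponent and yields only $N^{\vartheta+c+o(1)}$ with some $c=c(\rho,\vartheta)>0$. To reach the sharp exponent $\vartheta+o(1)$ I would instead control the whole maximal function $\max_{1\le M\le N}|S_{\va,d}(\vx;M)|$, and this is where the double-sequence form of the hypothesis is used in a decisive way, through a \emph{completion} step.

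First I would set up the completion. Fix $N$ and an integer $Q$ with $N<Q\le 2N$; since $1,\dots,N$ are distinct modulo $Q$, for $1\le M\le N$ the sharp cut-off admits the discrete Fourier expansion
$$
\mathbf 1[1\le n\le M]=\sum_{h=0}^{Q-1}\gamma_h(M)\,\e(hn/Q),\qquad \gamma_h(M)=\frac1Q\sum_{m=1}^{M}\e(-hm/Q),
$$
whose coefficients satisfy, uniformly in $M$, the bound $|\gamma_h(M)|\le c_h$ with $c_0\le1$ and $c_h\ll 1/\min(h,Q-h)$, so that $\sum_{h}c_h\ll\log N=N^{o(1)}$. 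Substituting this into $S_{\va,d}(\vx;M)=\sum_{n=1}^{N}a_n\mathbf 1[n\le M]\e(x_1n+\dots+x_dn^d)$ gives
$$
S_{\va,d}(\vx;M)=\sum_{h=0}^{Q-1}\gamma_h(M)\,T_h(\vx),\qquad T_h(\vx)=\sum_{n=1}^{N}a_n\e(hn/Q)\,\e(x_1n+\dots+x_dn^d).
$$
The decisive observation is that each $T_h(\vx)$ is again a sum of exactly the form in the hypothesis, with the double-sequence weights $a(N,n)=a_n\e(hn/Q)$, and these still obey $\max_{n\le N}|a(N,n)|\le N^{o(1)}$ because the frequency shift $\e(hn/Q)$ has modulus one. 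Hence the hypothesis applies verbatim, uniformly in $h$, to give $\int_{\Tor}|T_h(\vx)|^{\rho}\,d\mu(\vx)\le N^{\vartheta\rho+o(1)}$.

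Next I would assemble the maximal bound. Since $|S_{\va,d}(\vx;M)|\le\sum_h c_h|T_h(\vx)|$ uniformly in $M\le N$, Hölder's inequality (for $\rho\ge1$, with the $\ell^1$-bounded weights $c_h$) gives
$$
\int_{\Tor}\max_{1\le M\le N}|S_{\va,d}(\vx;M)|^{\rho}\,d\mu(\vx)\le\Bigl(\sum_h c_h\Bigr)^{\rho-1}\sum_h c_h\int_{\Tor}|T_h(\vx)|^{\rho}\,d\mu(\vx)\le N^{\vartheta\rho+o(1)}.
$$
By Markov's inequality, for fixed $\eps>0$ one then has $\mu\Bigl\{\vx:\ \max_{1\le M\le N}|S_{\va,d}(\vx;M)|>N^{\vartheta+\eps}\Bigr\}\le N^{-\eps\rho+o(1)}$. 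Along the lacunary sequence $N_k=2^k$ the right-hand side is summable in $k$, so by the Borel--Cantelli lemma, for $\mu$-almost all $\vx$ one has $\max_{M\le N_k}|S_{\va,d}(\vx;M)|\le N_k^{\vartheta+\eps}$ for all large $k$. For arbitrary large $N$, picking $k$ with $N_k\le N<N_{k+1}=2N_k$ and using monotonicity of the maximal function gives $|S_{\va,d}(\vx;N)|\le N_{k+1}^{\vartheta+\eps}\le 2\,N^{\vartheta+\eps}$; intersecting the full-measure sets over a sequence $\eps\to0$ yields \eqref{eq:S_ad-1/2}.

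The main obstacle is the no-loss maximal inequality in the assembly step. Replacing $\max_M$ by a sum over all $O(N)$ dyadic subintervals (the naive Rademacher--Menshov route) would reintroduce a factor $N$ and destroy the lacunary summability; it is therefore essential that the completion expresses the partial sum through only $O(\log N)$ effective frequencies with $\ell^1$-bounded coefficients, the shifts being absorbed into admissible weights rather than displacing the point $\vx$ (which matters since $\mu$ need not be translation invariant). A further technical point arises for $0<\rho<1$, where $L^\rho$ is not normed and the sharp cut-off coefficients $c_h\sim1/|h|$ have non-summable $\rho$-th powers; there I would replace the sharp cut-off by a smoother (higher-order Fej\'er/Jackson) one, whose Fourier coefficients decay like $|h|^{-m}$ with $m\rho>1$, so that $\sum_h c_h^{\rho}=N^{o(1)}$ and the same assembly goes through, the smoothing merely replacing the partial sum by a bounded average of partial sums, which is harmless for the almost-everywhere conclusion.
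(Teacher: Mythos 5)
Your argument is correct and follows essentially the same route as the paper: a completion step expressing all partial sums $S_{\va,d}(\vx;M)$, $M\le N$, through $O(\log N)$-weighted frequency-shifted sums whose weights $a_n\e(hn/Q)$ are admissible double sequences, followed by Markov's inequality and Borel--Cantelli along $N_k=2^k$, with the completion also filling the dyadic gaps. The paper packages the completed object as the single sum $W_{\va,d}(\vx;N)$ (its Lemma~\ref{lem:completion}) rather than applying the hypothesis to each harmonic $T_h$ and summing, but this is only a cosmetic difference; your extra care about uniformity in $h$ and about the case $0<\rho<1$ goes slightly beyond what the paper itself records.
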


\begin{remark}
\label{rem:almostall}
Let $\mu$ be a Radon measure on $\T_d$ with the property~\eqref{eq:decay} for some $\sigma\ge 1/d$. By using the similar arguments as in the proof of Theorem~\ref{thm:MVT}, we derive that  the measure $\mu$ satisfies the bound of Theorem~\ref{thm:almostall} 
with $\rho=2$ and $\vartheta=1/2$.  Therefore we obtain that for $\mu$-almost all $\vx\in \spt \mu$ we have the bound~\eqref{eq:S_ad-1/2}. 
\end{remark}

Applying  Theorem~\ref{thm:almostall}, Remark~\ref{rem:almostall}  and the bounds of~\eqref{eq:muS}
and~\eqref{eq:muM}, we obtain the square root cancellation in the following special cases of spheres and moment curves.

\begin{example}
\label{exam:almostall S M}
Let $\mu$ be the spherical measure $\muS$ or the natural measure  $\muM$ on the moment curve~\eqref{eq:moment-curve} and $a_n=n^{o(1)}$, then for $\mu$-almost all $\vx\in \spt \mu$ we have 
$$
|S_{\va, d}(\vx; N)|\le N^{1/2+o(1)}, \quad N\rightarrow \infty.
$$
\end{example}

Under the conditions of Theorem~\ref{thm:almostall}, we obtain that the bound~\eqref{eq:S_ad-1/2} holds for $\mu$-almost all $\vx\in \spt \mu$. Thus for any $\eps>0$ and $a_n=n^{o(1)}$ we have 
$$
\mu(\cE_{\va, d, \vartheta+\eps})=0,
$$ 
where the exceptional set $\cE_{\va, d, \vartheta+\eps}$ is given by~\eqref{eq:exceptional}.

For any $\alpha>\vartheta$ we  study the Hausdorff dimension of the  exceptional set $\cE_{\va, d, \alpha}$  of Theorem~\ref{thm:almostall}, that is the set for which~\eqref{eq:S_ad-1/2} fails.  
 For this purpose we need impose some regularity properties on the measure $\mu$.

\begin{definition}
\label{def:f-regular}
Let $\R_+=(0,  \infty)$ be the set of  positive real numbers. For $\vx\in \R^d$ and $\bzeta=(\zeta_1, \ldots, \zeta_d)\in \R_+^d$, we define the  $d$-dimensional rectangle (or box) with the centre $\vx$ and the side lengths $2\bzeta$ by 
$$
\cR(\vx, \bzeta)=[x_1-\zeta_1, x_1+\zeta_1)\times \ldots \times [x_d-\zeta_d, x_d+\zeta_d).
$$
Let $\mu$ be a Radon measure on $\R^d$. Suppose that there exists a  function $f:\R_+^d\rightarrow \R_+$  such that for any $\bzeta\in \R_+^{d}$  one has 
$$
\mu(\cR(\vx, \bzeta))\ge f(\bzeta), \qquad \forall \vx\in \spt \mu, 
$$
then we say that  the measure $\mu$ is $f$-regular.
\end{definition}

To illustrate Definition~\ref{def:f-regular}, we give  the following example of an $f$-regular measure. Let $L$ be a segment of $\R^d$ and $\mu$ be the natural measure on  $L$. Then for any $\bzeta=(\zeta_1, \ldots, \zeta_d)\in \R_+^d$ and any $\vx\in L$ we have   
$$
\mu(\cR(\vx, \bzeta))\ge \min\{\zeta_1, \ldots, \zeta_d\}. 
$$
Thus the measure $\mu$ is $f$-regular with $f(\bzeta) = \min\{\zeta_1, \ldots, \zeta_d\}$. 

\begin{theorem}
\label{thm:HD}
Let  $\mu$ be a $f$-regular  Radon measure on $\T_d$ for some function $f$ and let $\rho>0$,  $0<\vartheta<1$ be two constants such that for any 
double sequence $a(N, n)$ with the condition~\eqref{eq:double a} one has 
$$
\int_{\Tor}\left|\sum_{n=1}^{N} a(N, n)\e(x_1n+\ldots +x_d n^d)\right|^{\rho} d\mu(\vx)\le N^{\vartheta\rho+o(1)}.
$$
Then we have 
$$
\dim (\cE_{\va, d, \alpha} \cap \spt \mu)\le \mathfrak{u}_{\rho, \vartheta}(f; d, \alpha),
$$  
with  
\begin{align*}
\mathfrak{u}_{\rho, \vartheta}(f; d, \alpha)&=\inf \{t>0:~ \exists\, \varepsilon>0  \text{ such that }\\
&\qquad\quad\sum_{i=1}^\infty N_i^{\rho \vartheta -\rho \alpha+s(k)+t(\alpha-k-2)+ c(d)\varepsilon} f(\bzeta_i(\varepsilon))^{-1}<\infty,\\
& \qquad\qquad\qquad\qquad\qquad\qquad \text{ for some } k=0, \ldots, d-1 \},
\end{align*}  
where $c(d)$ is a positive constant which depends  only on $d$ and  
$$
\bzeta_{i}(\varepsilon)=\(\zeta_{i,1}(\varepsilon), \ldots, \zeta_{i, d}(\varepsilon)\)
$$
with  $\zeta_{i, j}(\varepsilon)=N_i^{\alpha-j-1-\varepsilon}$, $ j=1\ldots, d$, and $N_i=2^i$, $i\in \N$.
\end{theorem}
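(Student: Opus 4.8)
The plan is to build an economical anisotropic covering of $\cE_{\va, d, \alpha}\cap\spt\mu$ at the dyadic scales $N_i=2^i$ and to bound its $t$-dimensional Hausdorff cost, optimising over the subdivision parameter $k$. First I would pass to a maximal function over dyadic blocks. Since $\vx\in\cE_{\va, d, \alpha}$ forces $|S_{\va, d}(\vx;N)|\ge N^\alpha$ for infinitely many $N$, setting $S_i^*(\vx)=\max_{N_i\le N<N_{i+1}}|S_{\va, d}(\vx;N)|$ gives $S_i^*(\vx)\ge N_i^\alpha$ for infinitely many $i$, so that $\cE_{\va, d, \alpha}\subseteq\bigcap_{i_0}\bigcup_{i\ge i_0}\{\vx:\ S_i^*(\vx)\ge N_i^\alpha\}$. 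The first genuine step is to upgrade the hypothesis to the maximal sum: by the same completion argument used for Theorem~\ref{thm:almostall} (splitting $[1,N]$ into $N_i^{o(1)}$ dyadic blocks, each of the admissible form $\sum_n a(N_i,n)\e(x_1n+\ldots+x_dn^d)$ with $|a(N_i,n)|\le N_i^{o(1)}$ as in~\eqref{eq:double a}, and recombining via a Rademacher--Menshov type inequality), the stated moment bound yields
\[
\int_{\Tor}\(S_i^*(\vx)\)^\rho\,d\mu(\vx)\le N_i^{\vartheta\rho+o(1)}.
\]

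Next I would exploit the near-constancy of $S_i^*$ on the fine boxes $\cR(\vx,\bzeta_i(\varepsilon))$. For $\vy\in\cR(\vx,\bzeta_i(\varepsilon))$ and any $N<N_{i+1}$, a term-by-term estimate using $a_n=n^{o(1)}$ gives
\[
|S_{\va, d}(\vy;N)-S_{\va, d}(\vx;N)|\ll N_i^{o(1)}\sum_{j=1}^d\zeta_{i,j}(\varepsilon)\sum_{n<N_{i+1}}n^j\ll N_i^{\alpha-\varepsilon+o(1)},
\]
since $\zeta_{i,j}(\varepsilon)N_i^{j+1}=N_i^{\alpha-\varepsilon}$ for each $j$. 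Hence $S_i^*$ varies by at most $N_i^{\alpha-\varepsilon+o(1)}=o(N_i^\alpha)$ across each such box, so every fine box meeting $\{S_i^*\ge N_i^\alpha\}$ has $S_i^*\ge\tfrac12 N_i^\alpha$ throughout. Let $\cG_i$ be the collection of such fine boxes that also meet $\spt\mu$ (only these can meet $\cE_{\va, d, \alpha}\cap\spt\mu$). By $f$-regularity (Definition~\ref{def:f-regular}) each carries $\mu$-mass $\ge f(\bzeta_i(\varepsilon))$, so with bounded overlap depending only on $d$, Markov's inequality combined with the maximal moment bound gives
\[
\#\cG_i\cdot f(\bzeta_i(\varepsilon))\ll\mu\(\{\vx:\ S_i^*(\vx)\ge\tfrac14 N_i^\alpha\}\)\ll N_i^{-\alpha\rho}\int_{\Tor}\(S_i^*\)^\rho d\mu\ll N_i^{\rho\vartheta-\rho\alpha+o(1)}.
\]
This already yields the term $k=0$.

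The parameter $k$ then enters through a purely deterministic subdivision of each box of $\cG_i$, which is the heart of the argument. Fix $k\in\{0,\ldots,d-1\}$ and partition every fine box into congruent sub-boxes whose side in each coordinate $j\le k$ is shrunk from $\zeta_{i,j}(\varepsilon)$ down to $\zeta_{i,k+1}(\varepsilon)=N_i^{\alpha-k-2-\varepsilon}$, while the coordinates $j\ge k+1$ (already of side $\le\zeta_{i,k+1}(\varepsilon)$) are left unchanged. Each fine box splits into
\[
\prod_{j=1}^k\frac{\zeta_{i,j}(\varepsilon)}{\zeta_{i,k+1}(\varepsilon)}=\prod_{j=1}^k N_i^{k+1-j}=N_i^{k(k+1)/2}=N_i^{s(k)}
\]
sub-boxes, each of diameter $\asymp\zeta_{i,k+1}(\varepsilon)=N_i^{\alpha-k-2-\varepsilon}$. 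Writing $\cC_i$ for the resulting sub-boxes, the level-$i$ Hausdorff cost is
\[
\#\cG_i\cdot N_i^{s(k)}\cdot\(N_i^{\alpha-k-2-\varepsilon}\)^{t}\ll N_i^{\rho\vartheta-\rho\alpha+s(k)+t(\alpha-k-2)+c(d)\varepsilon}f(\bzeta_i(\varepsilon))^{-1},
\]
where the positive constant $c(d)$ absorbs the accumulated $N_i^{o(1)}$ factors, the bounded-overlap constant, and the favourable $-t\varepsilon$ from the diameter (which only helps convergence).

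Finally, suppose $t>0$ is such that for some $\varepsilon>0$ and some $k$ the series $\sum_i N_i^{\rho\vartheta-\rho\alpha+s(k)+t(\alpha-k-2)+c(d)\varepsilon}f(\bzeta_i(\varepsilon))^{-1}$ converges. Then its tails tend to $0$, and since $\cE_{\va, d, \alpha}\cap\spt\mu\subseteq\bigcup_{i\ge i_0}\cC_i$ for every $i_0$, the $t$-dimensional cost of these covers can be made arbitrarily small; by the definition of Hausdorff dimension this gives $\dim(\cE_{\va, d, \alpha}\cap\spt\mu)\le t$, and taking the infimum over admissible $t$ yields $\mathfrak{u}_{\rho,\vartheta}(f;d,\alpha)$. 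I expect the main obstacle to be the uniform maximal bound of the first step: passing from the single-$N$ moment hypothesis to control of $\max_N|S_{\va, d}(\cdot;N)|$ across a dyadic block while losing only $N_i^{o(1)}$, and verifying that the block sums really are of the admissible double-sequence form~\eqref{eq:double a}. By contrast the subdivision step is elementary, and it is precisely what converts the isotropic estimate at $k=0$ into the family optimised over $k=0,\ldots,d-1$.
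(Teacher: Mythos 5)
Your proposal is correct and follows essentially the same route as the paper: dyadic scales $N_i=2^i$, near-constancy of the (completed) sums on the anisotropic boxes $\cR(\vx,\bzeta_i(\varepsilon))$, a Markov-plus-$f$-regularity count of those boxes, and then the subdivision of each box into $N_i^{s(k)}$ pieces of diameter $\asymp\zeta_{i,k+1}(\varepsilon)$ (the singular value function $\varphi_{k,t}$ of Falconer), optimised over $k=0,\ldots,d-1$. The step you flag as the main obstacle is resolved in the paper by Lemma~\ref{lem:completion}: the completed sum $W_{\va, d}(\vx;N)$ dominates every $S_{\va, d}(\vx;M)$ with $M\le N$ and is itself a weighted Weyl sum with weights $a_nb_N(n)\ll N^{o(1)}$ of the admissible form~\eqref{eq:double a}, so the moment hypothesis applies to it directly and no Rademacher--Menshov recombination is needed.
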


\begin{remark} 
As we have claimed before there are many measures that satisfy the condition~\eqref{eq:decay}, 
and thus fulfil the conditions in Theorems~\ref{thm:MVT} and~\ref{thm:MVT-higher-box}. Moreover, applying the similar arguments as in the proofs of Theorems~\ref{thm:MVT} and~\ref{thm:MVT-higher-box}, one obtains that their conclusions 
 still hold even when we take any  double sequence $a(N, n)$ with the condition~\eqref{eq:double a} instead
  of the sequence $a_n$. Thus these measures also satisfy the  mean value bounds  in Theorem~\ref{thm:HD}. Furthermore, many measures  also satisfy the $f$-regular condition of Definition~\ref{def:f-regular} for some function $f$. Thus for these measures we deduce the dimension bounds for the sets $\cE_{\va, d, \alpha} \cap \spt \mu$. Below we give some concrete examples of applications of Theorem~\ref{thm:HD}.  
\end{remark}

We remark that if $\mu$ is the Lebesgue measure on $\T_d$ then for any rectangle $\cR(\vx, \bzeta)$ one has  
$$
\mu(\cR(\vx, \bzeta))= f(\bzeta) \quad \text{with}\quad  f(\bzeta) = \prod_{j=1}^{d}\zeta_j.
$$ 
For this special case and for  $\dim \cE_{\va, d, \alpha}$,
after simple calculations we obtain the same upper bound 
as~\eqref{eq:upper-u} for $\cE_{d, \alpha}$. More precisely, we have the following.
 
\begin{example}
Let $\mu$ be the Lebesgue measure on $\T_d$, then 
for $1/2<\alpha<1$ and $a_n=n^{o(1)}$ we have 
$\dim\cE_{\va, d, \alpha}\le \mathfrak{u}(d, \alpha)$,
where $\mathfrak{u}(d, \alpha)$ is given by~\eqref{eq:upper-u}. 
\end{example} 

Now we consider the Weyl sums on sphere $\Sp^{d-1}$.  Observe that for  any $\bzeta\in \R_+^{d}$ with 
$$
1>\zeta_1\ge \ldots\ge \zeta_d, 
$$
we have  
$$
\muS(\cR(\vx, \bzeta)\cap \Sp^{d-1})\gg  \prod_{j=2}^{d}\zeta_j, \qquad \vx\in \Sp^{d-1}, 
$$
hence $\muS$ is $f$-regular if we take  
\begin{equation}\label{eq:sphere+box}
f(\bzeta)  = c_0(d)\zeta_2 \ldots \zeta_d, 
\end{equation}
for some constant $c_0(d)>0$ 
which depends only on $d$.

Furthermore suppose that $\muS$ satisfies the condition in Theorem~\ref{thm:HD} for some $\rho$ and $\vartheta$,  then using~\eqref{eq:sphere+box}  in the setting of  Theorem~\ref{thm:HD},
we see that 
$$
f(\bzeta_i(\eps))\gg N_i^{(d-1)(\alpha-1-\eps)-s(d)+1},
$$
and we obtain that
$$
\dim (\cE_{\va, d, \alpha}\cap \Sp^{d-1})\le t
$$
provided that there exists $k=0, 1, \ldots, d-1$ such that 
$$
\rho\vartheta -\rho \alpha+s(k)+t(\alpha-k-2)+(1-\alpha)(d-1)+s(d)-1<0,
$$
which is the same as 
$$
t>\frac{\rho \vartheta-\rho \alpha+s(d)+s(k)+(d-1)(1-\alpha)-1}{k+2-\alpha}.
$$

We formulate these arguments into the following.

\begin{example}
\label{ex:sphere-dim}  
Suppose that there exists $\rho>0$, $0<\vartheta<1$ such that for any double sequence $a(N, n)$ with the condition~\eqref{eq:double a} one has 
$$
\int_{\Tor}\left|\sum_{n=1}^{N} a(N, n)\e(x_1n+\ldots +x_d n^d)\right|^{\rho} d\muS(\vx)\le N^{\vartheta\rho+o(1)}.
$$ 
Then we have 
$$
\dim (\cE_{\va, d, \alpha} \cap \Sp^{d-1})\le \mathfrak{u}_{\mathsf {S}, \rho, \vartheta}(d, \alpha),
$$ 
where
$$
\mathfrak{u}_{\mathsf {S}, \rho, \vartheta}(d, \alpha)=\min_{k=0, 1, \ldots, d-1}\frac{\rho \vartheta-\rho \alpha+s(d)+s(k)+(d-1)(1-\alpha)-1}{k+2-\alpha}.
$$ 
\end{example}

\begin{remark}
It is expected that  for some specific sets we 
could obtain better bounds than the general one of Theorem~\ref{thm:HD}. Below we give such results 
for  moment curves~\eqref{eq:moment-curve}  and line segments~\eqref{eq:segment}. 
\end{remark} 

\subsection{Moment curves}  
We start with giving an improved version  of the bound~\eqref{eq:MVT-Mon-Triv} when we integrate 
over the interval $[\delta, 1]$.

\begin{theorem} 
\label{thm:MVT-Mom} 
For any  $s\ge 1$ and  any $0<\delta<1$ and $N >1/\delta$ we have 
\begin{align*}
\int_\delta^{1} &
   \left|\sum_{n=1}^{N} a_n\e(tn+\ldots +t^d n^d)\right|^{2s} dt\\
& \qquad \qquad \le  \delta^{(1-d)/2} \(N^{s} +  N^{2s - s(d)/2}\)N^{s(d)/2-d/2+o(1)}.
\end{align*}
\end{theorem}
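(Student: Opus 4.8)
The plan is to open the $2s$-th power, reduce the problem to a weighted sum of one-dimensional oscillatory integrals along the moment curve, and then feed the Vinogradov mean value theorem~\eqref{eq:MVT} into the resulting frequency count. Writing $\vx=(t,\dots,t^d)$ and expanding the modulus, and using $|a_n|\le N^{o(1)}$ from~\eqref{eq:single a}, I would first obtain
$$
\int_\delta^1\Big|\sum_{n=1}^N a_n\e\Big(\sum_{j=1}^d t^j n^j\Big)\Big|^{2s}dt
\le N^{o(1)}\sum_{\vh\in\Z^d} r(\vh)\,|J(\vh)|,
$$
where $r(\vh)=\#\{(\vec{n},\vec{m})\in\{1,\dots,N\}^{2s}:\ \sum_i(n_i^j-m_i^j)=h_j,\ 1\le j\le d\}$ is the Vinogradov counting function and $J(\vh)=\int_\delta^1\e(\sum_{j=1}^d h_j t^j)\,dt$. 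Two soft facts are available at once: since $r(\vh)=\int_{\Tor}|S_d(\vx;N)|^{2s}\e(-\vx\cdot\vh)\,d\vx$ one has $r(\vh)\le r(\mathbf 0)=\int_{\Tor}|S_d(\vx;N)|^{2s}\,d\vx$, which by~\eqref{eq:MVT} is $\le N^{s+o(1)}+N^{2s-s(d)+o(1)}$; and $|h_j|\le sN^j$, so the ambient number of frequency vectors is $N^{s(d)+o(1)}$.

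Next I would extract cancellation from $J(\vh)$, and to make the dependence on $\delta$ transparent I would split $[\delta,1]$ into dyadic blocks $[\Delta,2\Delta]$ and rescale $t=\Delta\tau$, $\tau\in[1,2]$. This turns the phase into $\sum_j(\Delta^j h_j)\tau^j$ on a fixed interval bounded away from the origin. On $[1,2]$ the moment curve $\tau\mapsto(\tau,\dots,\tau^d)$ is non-degenerate (its Wronskian is a nonzero constant), so van der Corput's estimates and stationary phase give square-root-type decay of the oscillatory integral in terms of the rescaled coefficients, together with extra, $\delta$-sensitive gain whenever the stationary point is pushed outside $[\Delta,2\Delta]$; this is precisely where restricting to $[\delta,1]$ rather than $[0,1]$ helps, and it is what improves the trivial bound~\eqref{eq:MVT-Mon-Triv}.

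Inserting the bound for $J(\vh)$ into the frequency sum and organising it dyadically in the sizes of the $h_j$, I would estimate the count by $r(\vh)\le r(\mathbf 0)$ and sum the decay factors. The square-root cancellation on the fixed interval, combined with the two regimes of the frequency sum and the two terms of~\eqref{eq:MVT}, is what produces the pair $N^{s}$ and $N^{2s-s(d)/2}$ carried by the common factor $N^{s(d)/2-d/2+o(1)}$. Finally, summing the per-block estimates over the dyadic scales $\Delta\in[\delta,1]$ yields a geometric series in $\Delta^{-(d-1)/2}$ dominated by the smallest scale $\Delta\asymp\delta$, which is the source of the factor $\delta^{(1-d)/2}$.

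The hard part is the uniform oscillatory-integral estimate for a general degree-$d$ phase: I must bound $J(\vh)$ with the correct square-root exponent and the correct power of $\delta$ \emph{simultaneously} and uniformly over all $\vh$, including the degenerate configurations where several of the top coefficients $h_d,h_{d-1},\dots$ vanish or are small (so that the naive top-derivative van der Corput bound is unavailable), and the configurations where the stationary point clusters near $\delta$. Matching the output of this estimate, after summation against $r(\vh)$ and~\eqref{eq:MVT}, to the exact exponents $s(d)/2-d/2$ and $s(d)/2$ appearing in the two terms is the principal bookkeeping obstacle.
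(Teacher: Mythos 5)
Your opening reduction is the same as the paper's: expanding the $2s$-th power gives $\sum_{\vh\in\cD_N} r(\vh)\,|J(\vh)|$ with $r$ the (weighted) Vinogradov counting function and $J(\vh)=\widehat{\mu_{[\delta,1]}}(\vh)$, exactly as in~\eqref{eq:2s2s}. The gap is in how you combine the two factors. Estimating $r(\vh)\le r(\mathbf 0)\le N^{s+o(1)}+N^{2s-s(d)+o(1)}$ and then summing $|J(\vh)|$ in $\ell^1$ over $\cD_N$ gives at best
$\bigl(N^{s}+N^{2s-s(d)}\bigr)\,\delta^{(1-d)/2}N^{s(d)-d/2+o(1)}$,
whose first term exceeds the claimed $N^{s+s(d)/2-d/2}$ by a factor $N^{s(d)/2}$; this recovers the theorem only for $s\ge s(d)$ and is strictly weaker for all smaller $s$. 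The paper's key step, which your outline does not contain, is to apply the Cauchy--Schwarz inequality to the sum over $\vh$ itself, as in~\eqref{eq:C-S}: this pairs the $\ell^2$-norm of the counting function, $\bigl(\sum_{\vh}I_s(N;\vh)^2\bigr)^{1/2}=I_{2s}(N;\mathbf 0)^{1/2}\le N^{s+o(1)}+N^{2s-s(d)/2+o(1)}$ by the mean value theorem~\eqref{eq:MVT} at exponent $4s$ (see~\eqref{eq:ISN}), against the $\ell^2$-norm of the Fourier transform of the curve measure. That decoupling is precisely what produces the pair $N^{s}+N^{2s-s(d)/2}$ in the statement, and no rearrangement of $\|r\|_{\ell^\infty}\cdot\|J\|_{\ell^1}$ can reproduce it.

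The second problem is the oscillatory-integral input you flag as ``the hard part'': uniform square-root decay of $J(\vh)$ over all $\vh$ is not merely hard, it is false. For a general degree-$d$ polynomial phase the sharp uniform bound is $\|\vh\|^{-1/d}$ (take the phase $H(t-1/2)^{d}$, which lies inside $[\delta,1]$ and defeats every derivative up to order $d-1$), so a pointwise stationary-phase strategy would force you into a sublevel-set analysis of the degenerate frequencies that essentially rebuilds an $L^2$ computation. The paper avoids pointwise decay altogether: Lemma~\ref{lem:L2} establishes $\sum_{\vh\in\cD_N}|\widehat{\mu_{[\delta,1/2]}}(\vh)|^{2}\ll\delta^{1-d}N^{s(d)-d}$ by expanding the square, using orthogonality to reduce to $\int\!\!\int\prod_{i}\min\{N^{i},|u^{i}-v^{i}|^{-1}\}\,du\,dv$, and controlling $|u^{i}-v^{i}|$ from below via the Lagrange mean value theorem (this is where $\delta$ enters and where the factor $\delta^{1-d}$, hence $\delta^{(1-d)/2}$ after the square root, comes from); the range $[1/2,1]$ is handled separately by short intervals in Lemma~\ref{lem:short-interval}. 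Your dyadic-rescaling heuristic for the $\delta$-dependence points in the right direction, but without the $L^2$ (Cauchy--Schwarz) framework neither the exponent of $N$ nor the treatment of degenerate $\vh$ goes through.
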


Note that for a fixed $\delta$, by Theorem~\ref{thm:MVT} and Lemma~\ref{lem:moment-decay} we obtain 
$$
\int_\delta^{1} 
   \left|\sum_{n=1}^{N} a_n\e(tn+\ldots +t^d n^d)\right|^{2} dt\le N^{1+o(1)},
$$
and hence for any $s\ge 1$,
\begin{equation}
\label{eq:triv}
\int_\delta^{1} 
   \left|\sum_{n=1}^{N} a_n\e(tn+\ldots +t^d n^d)\right|^{2s} dt\le N^{2s-1+o(1)}.
\end{equation}
Clearly, for $s \ge s(d)/2$ the bound of Theorem~\ref{thm:MVT-Mom} takes form 
\begin{equation}
\label{eq:2s(d)}
 \int_\delta^{1}  \left|\sum_{n=1}^{N} a_n\e(tn+\ldots +t^d n^d)\right|^{2s} dt \le   \delta^{(1-d)/2} N^{2s-d/2+o(1)}.
\end{equation}
Thus for a fixed $\delta$, the bound~\eqref{eq:2s(d)} improve the trivial  bound~\eqref{eq:triv} for any $d>2$.

We remark that one can obtain a similar result for the integral over any interval $[\delta, L]$ from some $0<\delta<L$.

It is  interesting to understand whether the exponent $2s-d/2$  is optimal in~\eqref{eq:2s(d)}. However we have the following lower bound. Note that there exits a  small constant  $\varepsilon_0>0$ such that $t\in [1-\varepsilon_0/N^{d}, 1]$  implies 
$$
\left|\sum_{n=1}^{N} \e(tn+\ldots +t^d n^d) \right | \ge \frac{1}{2} N,
$$
and hence, say for $\delta < 1/2$ and any $s>0$,
\begin{align*}
\int_\delta^{1} &
   \left|\sum_{n=1}^{N} \e(tn+\ldots +t^d n^d)\right|^{2s} dt\\
 & \qquad \ge  \int_{1-\varepsilon_0/N^{d}}^{1}    \left|\sum_{n=1}^{N} \e(tn+\ldots +t^d n^d)\right|^{2s} dt \\
 & \qquad \ge 
 \int_{1-\varepsilon_0/N^{d}}^{1}  \left|\sum_{n=1}^{N} \e(tn+\ldots +t^d n^d)\right|^{2s} dt
\gg N^{2s -d}.
\end{align*}

For the moment curve $\Gamma$ defined by~\eqref{eq:moment-curve}, 
Example~\ref{exam:almostall S M} asserts that for 
$\muM$-almost all $\vx\in \Gamma$  
one has  
$$
|S_{\va, d}(\vx; N)|\le N^{1/2+o(1)}, \quad N\rightarrow \infty.
$$

For the exceptional sets $\cE_{\va, d, \alpha}\cap \Gamma$ we have the following.

\begin{theorem}
\label{thm:moment}
For the moment curve $\Gamma$ defined by~\eqref{eq:moment-curve} and any $a_n=n^{o(1)}$ we have 
$$
\dim (\cE_{\va, d, \alpha}\cap \Gamma) \le 1-\frac{2\alpha-1}{d+1-\alpha}.
$$
\end{theorem}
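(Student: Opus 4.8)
The plan is to push the problem down to the real line via the bi-Lipschitz parametrisation $\gamma(t)=(t,t^2,\ldots,t^d)$ of $\Gamma$ and then run a covering argument driven by the \emph{second} moment bound together with a completion (maximal) inequality. Since $\gamma\colon[0,1]\to\Gamma$ is smooth and satisfies $\|\gamma(t)-\gamma(s)\|\ge |t-s|$ (from its first coordinate), it is bi-Lipschitz and hence preserves Hausdorff dimension. Writing $F_N(t)=\sum_{n=1}^{N}a_n\e(tn+\ldots+t^dn^d)$ for the Weyl sum restricted to $\Gamma$, it therefore suffices to bound $\dim E_\alpha$, where $E_\alpha=\{t\in[0,1]:~|F_N(t)|\ge N^\alpha \text{ for infinitely many } N\}$. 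I would express this as a limsup over dyadic blocks, $E_\alpha=\bigcap_{I}\bigcup_{i\ge I}B_i$ with $B_i=\{t:~|F_N(t)|\ge N^\alpha \text{ for some }N\in[2^i,2^{i+1})\}$, so that it is enough to cover each $B_i$ efficiently and sum the costs.

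The two analytic inputs are a mean value bound and a derivative bound. For the former I would invoke Theorem~\ref{thm:MVT} applied to $\muM$, which by~\eqref{eq:muM} has $\sigma=1/d$, giving $\int_0^1|F_N(t)|^2\,dt\le N^{1+o(1)}$; crucially, by Remark~\ref{rem:almostall} the same bound holds for arbitrary double sequences $a(N,n)$ satisfying~\eqref{eq:double a}, and so the completion technique underlying Theorem~\ref{thm:almostall} upgrades it to the maximal estimate $\int_0^1\max_{N\le 2^{i+1}}|F_N(t)|^2\,dt\le 2^{i+o(i)}$. For the latter, differentiating term by term and using $a_n=n^{o(1)}$ gives $|F_N'(t)|\ll N^{d+1+o(1)}$ uniformly on $[0,1]$.

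Combining these, set $h_i=2^{i(\alpha-d-1)}$ (up to an absorbed $2^{o(i)}$ factor). The derivative bound shows that any interval of length $h_i$ meeting $B_i$ lies inside the superlevel set $\{t:~\max_{N\le 2^{i+1}}|F_N(t)|\ge 2^{i\alpha}/2\}$, whose Lebesgue measure is at most $2^{i(1-2\alpha)+o(i)}$ by the maximal bound and Chebyshev's inequality. Hence $B_i$ is covered by $\cN_i\le 2^{i(1-2\alpha)+o(i)}h_i^{-1}=2^{i(d+2-3\alpha)+o(i)}$ intervals of length $h_i$, and the $t$-cost of this cover is $\cN_i h_i^{\,t}=2^{i[(d+2-3\alpha)+t(\alpha-d-1)]+o(i)}$. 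Summing over $i$ and using $\alpha-d-1<0$, the series converges, so $\cH^t(E_\alpha)=0$, as soon as $t>(d+2-3\alpha)/(d+1-\alpha)=1-(2\alpha-1)/(d+1-\alpha)$, which is exactly the claimed bound. The peak of $|F_N|$ near $t=0$ is harmless, since it contributes measure $O(2^{-i})\le 2^{i(1-2\alpha)}$ for $\alpha\le 1$ and is thus already contained in the superlevel estimate.

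I expect the main obstacle to be the maximal inequality: passing from the fixed-$N$ second moment bound to $\int_0^1\max_{N\le 2^{i+1}}|F_N|^2$ with only a $2^{o(i)}$ loss is precisely what permits a block-by-block argument with the convergence threshold ``$<0$'' in the exponent, rather than summing a divergent series over all $N$ (which would force the stronger threshold ``$<-1$'' and yield nothing at the level of the second moment). This step rests on the double-sequence form of the mean value theorem recorded in Remark~\ref{rem:almostall} together with the standard completion of exponential sums; the remaining steps are routine. It is also worth emphasising that the superiority of this bound over the general Theorem~\ref{thm:HD} comes from exploiting that $\Gamma$ is one-dimensional, so the natural cover is by intervals at the full degree-$d$ derivative scale $N^{\alpha-d-1}$, which lets one use the optimal moment $\rho=2$ and avoids the higher-moment penalties present in the generic box estimate.
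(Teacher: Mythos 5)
Your argument is correct and is essentially the paper's proof: both cover the set of large values by pieces of diameter $N^{\alpha-d-1-\eps}$ (the scale fixed by the top-degree derivative), count them via the completion technique of Lemma~\ref{lem:completion}, the $L^2$ mean value bound coming from the Fourier decay~\eqref{eq:muM} of $\muM$, and Chebyshev's inequality, and then sum $\cN_i h_i^t$ over dyadic blocks to reach the threshold $t>(d+2-3\alpha)/(d+1-\alpha)$. The only real difference is organisational: by working in the parameter interval through the bi-Lipschitz map $t\mapsto(t,\ldots,t^d)$ you avoid the paper's truncation to $\Gamma_\delta=\Gamma\setminus\cB(\v0,\delta)$ (needed there because the box-diameter comparison $\diam(\cR(\vx,\bzeta)\cap\Gamma)\asymp\zeta_d$ degenerates as $t\to 0$) and the subsequent appeal to countable stability of Hausdorff dimension, which is a mild simplification rather than a different route.
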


We note that Theorem~\ref{thm:moment} gives a non-trivial bound for any $0<\alpha<1$. 
If $\alpha\rightarrow 1$ then the bounds of~\eqref{eq:upper-u} gives   
$$
\dim (\cE_{\va, d, \alpha} \cap \Gamma) \le \dim \cE_{\va, d, \alpha} \rightarrow 0,
$$
which is better than  the bound in Theorem~\ref{thm:moment}. 
However, if $\alpha$ is close to $1/2$ then Theorem~\ref{thm:moment} implies a  better bound.

It is natural to expect   that if $\alpha \rightarrow 1/2$ then
$$
\dim \cE_{\va, d, \alpha}\rightarrow d \quad \text{and} \quad \dim (\cE_{\va, d, \alpha}\cap \Gamma) \rightarrow 1.
$$

\subsection{Segments}

We investigate the Weyl sums on the given segment. Among other things, our results imply that  the  condition~\eqref{eq:decay} of Theorem~\ref{thm:MVT}  is not necessary. 

We introduce some notation first. Let $\bm{\omega} \in \R^d$ with $\|\bm{\omega}\|=1$ and 
\begin{equation}
\label{eq:segment}
L_{\bm{\omega}}=\{t\bm{\omega}:~ t\in [0,1]\}.
\end{equation} 
Let $\muo$ be the Lebesgue measure on $L_{\bm{\omega}}$. The orthogonal complementary space of $\bomega$ is defined as 
$$
\bomega^{\perp}=\{\vx\in \R^d:~\vx\cdot \bomega=0\}.
$$
Clearly for any $\bxi\in \bomega^{\perp}$ we have  
$$
\widehat{\muo}(\bxi)=\int_{L_{\bomega}} \e(-\vx\cdot \bxi) d\muo (\vx)=\int_{0}^{1}\e(-t\, \bxi\cdot \bomega) dt=1.
$$
Thus the measure $\muo$ does not have the decay property as~\eqref{eq:decay}. However, by using the van der Corput  lemma
(see~\cite[Theorem~14.2]{Mattila2015} or Lemma~\ref{lem:var} below)   we obtain an analogue of the result  of Theorem~\ref{thm:MVT}.

\begin{theorem}
\label{thm:MVT-L}
Using the above notation, let $\va=(a_n)_{n=1}^{\infty}$ satisfy  the condition~\eqref{eq:single a}, then 
$$
\int_{L_{\bomega}}\left|S_{\va, d}(\vx; N)\right| ^{2} d\muo(\vx)\le N^{1+o(1)}.
$$
\end{theorem}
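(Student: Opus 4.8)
The plan is to parametrise $L_{\bomega}$ by $\vx = t\bomega$, $t\in[0,1]$, reduce the problem to a one-dimensional oscillatory integral, and then expand the square exactly as in the proof of Theorem~\ref{thm:MVT}. Setting
\[
\psi(n)=\omega_1 n+\omega_2 n^2+\cdots+\omega_d n^d,
\]
we have $S_{\va,d}(t\bomega;N)=\sum_{n=1}^{N}a_n\e(t\psi(n))$, so by the definition of $\muo$,
\[
\int_{L_{\bomega}}\left|S_{\va,d}(\vx;N)\right|^2 d\muo(\vx)
=\int_0^1\Bigl|\sum_{n=1}^{N}a_n\e(t\psi(n))\Bigr|^2 dt
=\sum_{m,n=1}^{N}a_m\overline{a_n}\int_0^1\e\bigl(t(\psi(m)-\psi(n))\bigr)\,dt .
\]
Writing $\theta=\psi(m)-\psi(n)=\vk\cdot\bomega$ with $\vk=(m^j-n^j)_{j=1}^d$, the inner integral is exactly $\widehat{\muo}(-\vk)$, which matches the structure of Theorem~\ref{thm:MVT}; the difference is that $\muo$ has no uniform decay, so in place of a bound on $\|\vk\|$ I work directly with $\theta$, using $\bigl|\int_0^1\e(t\theta)\,dt\bigr|\le\min\{1,(\pi|\theta|)^{-1}\}$. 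The diagonal terms $m=n$ contribute $\sum_n|a_n|^2\le N^{1+o(1)}$ by~\eqref{eq:single a}, so it remains to bound the off-diagonal contribution by $N^{1+o(1)}$.

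Next I would group the off-diagonal terms by the difference $r=m-n\ge1$ (using symmetry and that the weights cost only a factor $N^{o(1)}$), reducing the task to
\[
\sum_{r=1}^{N-1}\Sigma_r\le N^{1+o(1)},\qquad
\Sigma_r:=\sum_{n=1}^{N-r}\min\{1,(\pi|Q_r(n)|)^{-1}\},\qquad Q_r(n):=\psi(n+r)-\psi(n).
\]
Let $D=\max\{j:\omega_j\neq0\}$, which is well defined since $\|\bomega\|=1$. When $D=1$ the frequencies $\psi(n)=\pm n$ are integers, the off-diagonal integrals vanish, and the bound is immediate; so assume $D\ge2$. The crucial structural observation is that, \emph{as a polynomial in $n$}, $Q_r$ has degree $D-1$ with constant top derivative $Q_r^{(D-1)}\equiv\omega_D\,D!\,r\neq0$. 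This is exactly the feature that makes the method robust: although $\psi$ itself may have critical points inside $[1,N]$ (so its values can cluster and it need not be monotone), the difference polynomial $Q_r$ has a nonvanishing constant top derivative on the whole range at once.

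I would then invoke the sublevel set estimate underlying the van der Corput lemma (Lemma~\ref{lem:var}): since $|Q_r^{(D-1)}|=|\omega_D|\,D!\,r$ is a nonzero constant, for every $T>0$
\[
\#\{n\in[1,N-r]:|Q_r(n)|\le T\}\ll\left(\frac{T}{r}\right)^{1/(D-1)}+1,
\]
where the $+1$ absorbs the bounded number of intervals forming the sublevel set of the degree-$(D-1)$ polynomial $Q_r$, and the implied constant depends only on $d$ and $\bomega$. A dyadic decomposition of the range of $|Q_r(n)|$ then gives $\Sigma_r\ll N^{o(1)}\bigl(r^{-1/(D-1)}+1\bigr)$, the factor $N^{o(1)}$ accounting for the $O(\log N)$ dyadic scales (needed only in the borderline case $D=2$). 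Summing over $r$ and using $1/(D-1)\le1$ yields
\[
\sum_{r=1}^{N-1}\Sigma_r\ll N^{o(1)}\sum_{r=1}^{N-1}\bigl(r^{-1/(D-1)}+1\bigr)\ll N^{1+o(1)},
\]
which together with the diagonal estimate completes the proof.

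The main obstacle is controlling how often $|\psi(m)-\psi(n)|$ is small, uniformly over all pairs, including small $m,n$ where $\psi$ is badly behaved; this is precisely where the fixed-$r$ reorganisation pays off, converting the problem into a sublevel set count for $Q_r$ whose top derivative is constant and insensitive to the location of the critical points of $\psi$. A minor technical point to check carefully is the bound on the number of connected components of the sublevel sets (to justify the $+1$) and the harmless $\log$ loss when $D=2$, but both cost only $N^{o(1)}$ and do not affect the final exponent.
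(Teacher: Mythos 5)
Your proposal is correct and proves the theorem. The overall skeleton coincides with the paper's: parametrise the segment, expand the square, and bound each off-diagonal oscillatory integral by $\min\{1,|\theta|^{-1}\}$ with $\theta=\bomega\cdot\bxi_{n,m}$ (the paper phrases this via Lemma~\ref{lem:var} with $k=1$). Where you genuinely diverge is in how the off-diagonal sum is controlled. The paper proves the pointwise lower bound $|\bomega\cdot\bxi_{n,m}|\gg|n-m|$ for all $n\neq m$ with $\max\{n,m\}\ge C_{\bomega}$, by observing that if $k$ is the largest index with $\omega_k\neq 0$ then the term $\omega_k(n^k-m^k)$ dominates all lower-degree contributions once $\max\{n,m\}$ is large; this reduces the off-diagonal sum to $\sum_{n\neq m}|n-m|^{-1}\ll N\log N$ plus a bounded number of exceptional pairs. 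You instead fix the difference $r=m-n$, note that $Q_r(n)=\psi(n+r)-\psi(n)$ is a degree-$(D-1)$ polynomial in $n$ with constant top derivative $\omega_D D!\,r$, and count small values of $Q_r$ via a sublevel-set estimate plus a dyadic decomposition. Both routes give $N^{1+o(1)}$. The paper's argument is shorter and in fact yields the stronger pointwise statement $|\theta|\gg|n-m|$, which is what feeds Remark~\ref{rem:optimal}-type asymptotics in the unrestricted setting; your argument is more robust, since it never needs a single dominant coefficient and would survive for more general frequency families $\psi$ where the values $\psi(m)-\psi(n)$ can only be controlled on average rather than pointwise, at the cost of the extra (harmless) $\log$ losses you already flag for $D=2$ and the need to justify the $O(1)$ count of components of the sublevel sets, which is immediate for polynomials of bounded degree.
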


\begin{cor}
\label{cor:almostall-segment}
Using the above notation, for any sequence $a_n=n^{o(1)}$ and for $\muo$-almost all $\vx\in L_{\bomega}$ one has
$$
|S_{\va, d}(\vx; N)|\le N^{1/2+o(1)}.
$$ 
\end{cor}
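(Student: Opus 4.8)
The plan is to obtain Corollary~\ref{cor:almostall-segment} as an immediate consequence of the second moment bound of Theorem~\ref{thm:MVT-L}, fed into the transference principle of Theorem~\ref{thm:almostall} with the parameters $\rho = 2$ and $\vartheta = 1/2$. Since $\muo$ is a finite Borel measure on $L_{\bomega}$, we may regard it as a Radon measure on $\T_d$ via reduction modulo $\Z^d$, using that $S_{\va, d}(\vx; N)$ is $\Z^d$-periodic; thus $\spt \muo = L_{\bomega}$ and Theorem~\ref{thm:almostall} is applicable in principle. The only point requiring verification is that the mean value hypothesis of Theorem~\ref{thm:almostall} holds, namely that the bound of Theorem~\ref{thm:MVT-L} persists when the single sequence $\va$ is replaced by an arbitrary double sequence $a(N, n)$ subject to~\eqref{eq:double a}.

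To check this, I would expand the square and integrate termwise against $\muo$. Writing $\vk_{m,n} = (m - n, m^2 - n^2, \ldots, m^d - n^d)$, one has
\begin{equation*}
\int_{L_{\bomega}}\left|\sum_{n=1}^{N} a(N, n)\e(x_1 n + \ldots + x_d n^d)\right|^{2} d\muo(\vx)
= \sum_{m,n=1}^{N} a(N, m)\overline{a(N, n)}\,\widehat{\muo}(-\vk_{m,n}).
\end{equation*}
Because the weights enter only through the factor $a(N, m)\overline{a(N, n)}$, whose modulus is at most $N^{o(1)}$ by~\eqref{eq:double a}, they may be discarded before any oscillatory estimate is applied, leaving
\begin{equation*}
\int_{L_{\bomega}}\left|\sum_{n=1}^{N} a(N, n)\e(x_1 n + \ldots + x_d n^d)\right|^{2} d\muo(\vx)
\le N^{o(1)} \sum_{m,n=1}^{N} \left|\widehat{\muo}(\vk_{m,n})\right|.
\end{equation*}
The last sum is of exactly the type controlled in the proof of Theorem~\ref{thm:MVT-L}: the diagonal $m = n$ contributes $\widehat{\muo}(\0) = 1$ for each of the $N$ terms, while the off-diagonal terms are handled through the first-derivative estimate $|\widehat{\muo}(\vk_{m,n})| \ll \min\{1, |\bomega\cdot\vk_{m,n}|^{-1}\}$ (van der Corput, Lemma~\ref{lem:var}) together with the count of pairs $(m,n)$ for which $\bomega\cdot\vk_{m,n}$ lies in a given dyadic range. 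Since this argument uses nothing about the weights beyond their size, it yields the same bound $N^{1+o(1)}$ for the double sequence as for $\va$.

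With the hypothesis of Theorem~\ref{thm:almostall} thereby verified for $\rho = 2$ and $\vartheta = 1/2$, that theorem delivers $|S_{\va, d}(\vx; N)| \le N^{1/2 + o(1)}$ as $N \to \infty$ for $\muo$-almost all $\vx \in \spt \muo = L_{\bomega}$, which is the claim. There is no substantive obstacle here: all of the analytic difficulty is already absorbed into Theorem~\ref{thm:MVT-L} (the van der Corput input and the pair count) and into the almost-all deduction of Theorem~\ref{thm:almostall}. The only genuinely new observation is the weight-uniformity noted above, which upgrades the single-sequence statement of Theorem~\ref{thm:MVT-L} to the double-sequence form demanded by Theorem~\ref{thm:almostall}, and this is routine precisely because $|a(N,m)\overline{a(N,n)}| \le N^{o(1)}$ lets the weights be pulled out of the estimate entirely.
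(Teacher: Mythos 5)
Your proposal is correct and follows essentially the same route as the paper: the authors likewise verify the hypothesis of Theorem~\ref{thm:almostall} with $\rho=2$, $\vartheta=1/2$ by rerunning the proof of Theorem~\ref{thm:MVT-L} with the double sequence $a(N,n)$ in place of $a_n$ (which works because the weights enter only through their modulus $N^{o(1)}$), and then invoke Theorem~\ref{thm:almostall}. Your explicit justification of the weight-uniformity step is a welcome elaboration of what the paper leaves implicit.
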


\begin{cor}
\label{cor:HD-segment}
Let $a_n=n^{o(1)}$ and  $1/2<\alpha<1$ then for any 
\begin{equation}
\label{eq:omega-k}
\bomega=(\omega_1, \ldots, \omega_k, 0, \ldots, 0)\in \R^d, \quad \omega_k\neq 0,\  \|\bomega\|=1,
\end{equation}
we have 
$$
\dim (\cE_{\va, d, \alpha} \cap L_{\bomega})\le 1-\frac{2\alpha-1}{k+1-\alpha}.
$$
\end{cor}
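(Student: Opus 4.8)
\textbf{Proof proposal for Corollary~\ref{cor:HD-segment}.}

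The plan is to reduce the segment $L_{\bomega}$ with $\bomega = (\omega_1, \ldots, \omega_k, 0, \ldots, 0)$ to the moment-curve situation already handled in Theorem~\ref{thm:moment}, or more directly to apply the machinery of Theorem~\ref{thm:HD} after verifying the two hypotheses: a mean value bound of the required shape and an $f$-regularity estimate for $\muo$. First I would observe that since only the first $k$ coordinates of $\bomega$ are nonzero, a point $\vx = t\bomega \in L_{\bomega}$ has the form $(t\omega_1, \ldots, t\omega_k, 0, \ldots, 0)$, so the Weyl sum becomes
$$
S_{\va,d}(t\bomega; N) = \sum_{n=1}^N a_n \e\Bigl(t\omega_1 n + \ldots + t\omega_k n^k\Bigr),
$$
which depends on $t$ through a single real parameter driving a polynomial of degree exactly $k$ (the top term $t\omega_k n^k$ has $\omega_k \neq 0$). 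Thus the relevant sum is effectively a one-parameter Weyl sum of degree $k$, and the problem lives intrinsically in dimension $k$ rather than $d$, which explains the appearance of $k$ in the claimed bound $1 - (2\alpha-1)/(k+1-\alpha)$.

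The key steps are as follows. First I would establish the mean value input: using the van der Corput estimate as in the proof of Theorem~\ref{thm:MVT-L} (and its higher-moment analogue behind Theorem~\ref{thm:MVT-Mom}), I expect to obtain, for any double sequence $a(N,n)$ satisfying~\eqref{eq:double a},
$$
\int_{L_{\bomega}} \Bigl|\sum_{n=1}^N a(N,n)\e(x_1 n + \ldots + x_d n^d)\Bigr|^{\rho} d\muo(\vx) \le N^{\vartheta\rho + o(1)}
$$
with the pair $(\rho, \vartheta) = (2, 1/2)$, exactly as in Remark~\ref{rem:almostall} and Corollary~\ref{cor:almostall-segment}; this gives square-root cancellation on average along the segment. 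Second, I would verify $f$-regularity: as noted right after Definition~\ref{def:f-regular}, the natural measure on a segment satisfies $\muo(\cR(\vx,\bzeta)) \ge \min\{\zeta_1, \ldots, \zeta_d\}$, so $\muo$ is $f$-regular with $f(\bzeta) = \min_j \zeta_j$. However, since the segment only moves in the first $k$ coordinates, the relevant box intersection is governed by the smallest of the first $k$ side lengths, so in the scaling $\zeta_{i,j}(\eps) = N_i^{\alpha - j - 1 - \eps}$ the binding constraint is $\min_{1\le j \le k}\zeta_{i,j} = \zeta_{i,k} = N_i^{\alpha - k - 1 - \eps}$, giving $f(\bzeta_i(\eps)) \gg N_i^{\alpha - k - 1 - \eps}$.

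Finally I would feed these into Theorem~\ref{thm:HD}. Substituting $\rho = 2$, $\vartheta = 1/2$ and $f(\bzeta_i(\eps))^{-1} \ll N_i^{k+1-\alpha+\eps}$ into the summability condition, the exponent of $N_i$ in each term becomes $(1 - 2\alpha) + s(j) + t(\alpha - j - 2) + (k + 1 - \alpha) + c(d)\eps$ for a choice of $j$; choosing $j = k$ and demanding the coefficient be negative yields, after simplification, the threshold $t > 1 - (2\alpha-1)/(k+1-\alpha)$, matching the claimed bound. I expect the \emph{main obstacle} to be making the dimension reduction from $d$ to $k$ fully rigorous inside the framework of Theorem~\ref{thm:HD}: the theorem as stated uses all $d$ coordinates and the full exponent $s(d)$, so I must either reprove the covering argument directly for the degenerate segment (tracking that only $k$ effective directions contribute to both the oscillation and the measure lower bound), or argue that the extra $d - k$ coordinates are genuinely inert and can be projected away. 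Verifying that the optimal index in the minimization is indeed $k$ (rather than some smaller value), and that the van der Corput moment bound is strong enough to supply the degree-$k$ mean value with the right exponent, will require the same care as in Theorem~\ref{thm:moment}, to which this corollary is the segment analogue.
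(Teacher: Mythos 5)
Your ingredients are the right ones (the $L^2$ mean value bound over $L_{\bomega}$ with $(\rho,\vartheta)=(2,1/2)$ via van der Corput, exactly as in Theorem~\ref{thm:MVT-L} and Corollary~\ref{cor:almostall-segment}, and the sharpened regularity $f(\bzeta)\gg\zeta_k$ coming from~\eqref{eq:omega-k}), but the final step as you have written it does not produce the stated bound, and the discrepancy is not a matter of "making the reduction rigorous" --- it is a genuine arithmetic gap. If you substitute $\rho=2$, $\vartheta=1/2$, $f(\bzeta_i(\eps))^{-1}\ll N_i^{k+1-\alpha+\eps}$ into Theorem~\ref{thm:HD} with index $j=k$, the negativity condition on the exponent $(1-2\alpha)+s(k)+t(\alpha-k-2)+(k+1-\alpha)$ gives $t>\bigl(s(k)+k+2-3\alpha\bigr)/(k+2-\alpha)$, not $(k+2-3\alpha)/(k+1-\alpha)=1-(2\alpha-1)/(k+1-\alpha)$. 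The extra $s(k)$ in the numerator and the denominator $k+2-\alpha$ instead of $k+1-\alpha$ make this strictly weaker for every $k\ge 1$ (e.g.\ for $k=1$, $\alpha=3/4$ you get $7/9$ versus the claimed $3/5$). The source of the loss is that Theorem~\ref{thm:HD} covers each \emph{full} rectangle $\cR$ via the singular value function $\varphi_{k,t}(\cR)=\zeta_{i,1}\cdots\zeta_{i,k}\,\zeta_{i,k+1}^{t-k}$, which is wasteful for a one-dimensional target.

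The missing idea, which is how the paper proceeds, is to cover $\cE_{\va,d,\alpha}\cap L_{\bomega}$ not by the rectangles themselves but by the intersections $\cR\cap L_{\bomega}$: since the centre of each $\cR\in\fR(N_i)$ lies on $L_{\bomega}$ and $\omega_k\neq 0$ while $\omega_{k+1}=\ldots=\omega_d=0$, each such intersection is a single set of diameter $\asymp\zeta_{i,k}$, contributing $\zeta_{i,k}^{t}$ (not $\varphi_{k,t}(\cR)$) to the Hausdorff sum. Combined with $\#\fR(N_i)\le N_i^{1-2\alpha}\zeta_{i,k}^{-1}$ this gives the exponent $k+2-3\alpha+t(\alpha-k-1)+O(\eps)$, whose negativity is exactly $t>(k+2-3\alpha)/(k+1-\alpha)$. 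This is the same device used in the proof of Theorem~\ref{thm:moment} (with $\zeta_{i,k}$ in place of $\zeta_{i,d}$; cf.\ Remark~\ref{rem:unif segm}), and it is the content of the remark preceding the segment results that special sets admit better bounds than the general Theorem~\ref{thm:HD}. Your proposed "degree-$k$ reduction" of the Weyl sum is not needed anywhere: the mean value input is already available for the degree-$d$ sum restricted to the segment, and the parameter $k$ enters only through this geometric diameter estimate.
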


Note that the condition~\eqref{eq:omega-k} is used in the following way. For any $\vx\in L_{\bomega}$ and $\bzeta=(\zeta_1, \ldots, \zeta_d)$ with $0<\zeta_j<1$, $j=1, \ldots, d$, we have 
$$
\zeta_k\ll \diam(L_{\bomega} \cap \cR(\vx, \bzeta))\ll \zeta_k.
$$

We remark that  if $\alpha\rightarrow 1$ then the bounds of~\eqref{eq:upper-u} gives 
$$
\dim (\cE_{\va, d, \alpha} \cap \spt \muo) \le \dim \cE_{\va, d, \alpha} \rightarrow 0,
$$
which is better than the bound in  Corollary~\ref{cor:HD-segment}.  However the following Example~\ref{exam:Hor Segm} shows that Corollary~\ref{cor:HD-segment} can give better bounds in some cases. 

\begin{example}
\label{exam:Hor Segm} 
For the horizontal segment 
$$
L=\{(t, 0):~t\in [0,1)\}\subseteq \T_2,
$$
and for  any $1/2<\alpha<1$,  Corollary~\ref{cor:HD-segment} with $d=2$ and $k=1$ implies that 
$$
\dim (\cE_{d, \alpha} \cap L)\le \frac{3(1-\alpha)}{2-\alpha}.
$$
\end{example}

While applying~\eqref{eq:upper-u} and~\eqref{eq:def u}  and using $k=0, 1$  we obtain
$$
\dim (\cE_{d, \alpha} \cap L)\le  \dim \cE_{d, \alpha} \le \min\left \{\frac{8(1-\alpha)}{2-\alpha}, \frac{9-8\alpha}{3-\alpha}\right \}.
$$
For $1/2<\alpha<1$ elementary calculus    shows that 
$$
\frac{3(1-\alpha)}{2-\alpha}<\min\left \{\frac{8(1-\alpha)}{2-\alpha}, \frac{9-8\alpha}{3-\alpha}\right \}.
$$
Therefore,  the bound of Example~\ref{exam:Hor Segm}
gives a better bound for all $1/2<\alpha<1$. 
 
In general,  the exact comparison between the bound $\mathfrak{u}(d, \alpha)$ and that of Corollary~\ref{cor:HD-segment} is not immediately obvious.

\section{Preliminaries}

\subsection{The completion technique} 
We remark that the completion technique has many applications in analytic number theory.  The following  bound is a special case of~\cite[Lemma~3.2]{ChSh-New}. 
\begin{lemma}\label{lem:completion} 
For $\vx\in \Tor$ and $1\le M\le N$ we have 
$$
S_{\va, d}( \vx; M) \ll W_{\va, d}( \vx; N),
$$
where 
$$
W_{\va, d}( \vx; N)=  \sum_{h=-N}^{N} \frac{1}{|h|+1} \left| \sum_{n=1}^{N}  a_n \e\(h n/N+x_1n+\ldots+x_dn^d \) \right|.
$$
\end{lemma}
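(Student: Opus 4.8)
The plan is to detect the truncation $1\le n\le M$ through the orthogonality of additive characters modulo $N$, which lets me rewrite the partial sum $S_{\va,d}(\vx;M)$ as a weighted average of the complete sums
$$
U_h(\vx)=\sum_{n=1}^{N}a_n\e\(hn/N+x_1n+\ldots+x_dn^d\)
$$
that appear inside $W_{\va,d}(\vx;N)$. First I would record the discrete orthogonality relation: for integers $n$ in the range $1\le n\le N$,
$$
\frac1N\sum_{h=0}^{N-1}\e\(h(n-m)/N\)=
\begin{cases}
1,& n\equiv m\pmod N,\\
0,& \text{otherwise}.
\end{cases}
$$
Since $M\le N$, summing this over $m=1,\ldots,M$ reproduces exactly the indicator of $\{1,\ldots,M\}$ on $\{1,\ldots,N\}$, because the residues $1,\ldots,M$ are distinct modulo $N$.

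Inserting this identity into $S_{\va,d}(\vx;M)=\sum_{n=1}^{N}a_n\e(x_1n+\ldots+x_dn^d)\mathbf 1[1\le n\le M]$ and exchanging the order of summation gives
$$
S_{\va,d}(\vx;M)=\sum_{h=0}^{N-1}c_h\,U_h(\vx),\qquad c_h=\frac1N\sum_{m=1}^{M}\e(-hm/N).
$$
The next step is to estimate the completion weights $c_h$. For $h\equiv 0\pmod N$ one has $|c_h|=M/N\le 1$, while for $1\le h\le N-1$ the geometric-sum bound $\bigl|\sum_{m=1}^{M}\e(-hm/N)\bigr|\le |\sin(\pi h/N)|^{-1}$ yields $|c_h|\le \(N|\sin(\pi h/N)|\)^{-1}$.

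Now comes the one point that needs care. Each $U_h(\vx)$ depends on $h$ only modulo $N$, since $\e(n)=1$ for integer $n$, whereas the weight $(|h|+1)^{-1}$ in $W_{\va,d}$ is not periodic. I would therefore replace the residue system $\{0,1,\ldots,N-1\}$ by a balanced one, keeping representatives with $|h|\le N/2$. On this range $|\sin(\pi h/N)|=\sin(\pi|h|/N)\ge 2|h|/N$, so $|c_h|\le (2|h|)^{-1}\ll(|h|+1)^{-1}$, and the estimate $|c_h|\le 1\ll(|h|+1)^{-1}$ covers $h=0$. The balanced range $|h|\le N/2$ sits inside $[-N,N]$, so by the triangle inequality
$$
\abs{S_{\va,d}(\vx;M)}\le\sum_{|h|\le N/2}|c_h|\,\abs{U_h(\vx)}\ll\sum_{h=-N}^{N}\frac1{|h|+1}\,\abs{U_h(\vx)}=W_{\va,d}(\vx;N),
$$
which is the asserted bound.

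The main obstacle is purely bookkeeping: one must pass from the natural residues $\{0,\ldots,N-1\}$, on which the orthogonality identity is stated, to the balanced representatives before the weight $(|h|+1)^{-1}$ can be matched; the generous range $-N\le h\le N$ in the definition of $W_{\va,d}$ is precisely what leaves enough room for this reindexing. Apart from the standard geometric-sum estimate, I expect no substantive difficulty.
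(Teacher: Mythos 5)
Your proof is correct and is the standard completion-technique argument: detect the truncation via orthogonality modulo $N$, bound the completion weights by the geometric-sum estimate, and pass to balanced residues so the weights are dominated by $(|h|+1)^{-1}$. The paper itself gives no proof here — it cites Lemma~3.2 of the authors' earlier work — and your argument is essentially the one underlying that reference, so there is nothing to add beyond noting that your handling of the reindexing to $|h|\le N/2$ (including the even-$N$ duplicate representative) is the only point requiring care, and you handle it correctly.
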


Note that for any $N$ there exists a sequence $b_N(n)$ such that 
$$
b_N(n) \ll \log N, \qquad n=1, \ldots, N, 
$$ 
and $W_{\va, d}( \vx; N)$ can be written as 
$$
W_{\va, d}( \vu; N)=\sum_{n=1}^N a_n b_N(n)  \e(x_1 n+\ldots + x_d n^d).
$$ 
Indeed, for each $h\in \Z$, $N\in \N$, $\vx\in \T_d$ and the sequence $\va=(a_n)_{n=1}^{\infty}$ there exists some complex  number $\vartheta(h, N, \vx, \va)$ on the unit circle such that 
$$
W_{\va, d}( \vx; N)= \sum_{h=-N}^{N} \frac{\vartheta(h, N, \vx, \va)}{|h|+1}   \sum_{n=1}^{N} a_n  \e\(h n/N+x_1 n+\ldots + x_d n^d \).
$$                           
 Hence 
$$
b_N(n) =  \sum_{h=-N}^{N} \frac{\vartheta(h, N, \vx, \va)}{|h|+1}  \e(h n/N) \ll \log N. 
$$ 

\subsection{Continuity of exponential  sums}
Analogously to~\cite[Lemma~3.5]{ChSh-New}  and~\cite[Lemma~2.1]{Wool3} we obtain:

\begin{lemma} 
\label{lem:cont} 
Let $0<\alpha<1$ and let $\varepsilon>0$ be  sufficiently small. If 
$|W_{\va, d}(\vx; N)|\ge N^{\alpha}$ for  some $\vx\in \Tor$,  then 
$$
|W_{\va, d}( \vy; N)|\ge  N^{\alpha}/2
$$
holds for  any $\vy\in \cR(\vx, \bzeta)$ provided  that $N$ is large enough and 
$$
0<\zeta_j\le N^{\alpha-j-1-\eps},  \qquad j =1, \ldots, d.
$$
\end{lemma}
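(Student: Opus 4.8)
The plan is to control the discrepancy $|W_{\va, d}(\vx; N) - W_{\va, d}(\vy; N)|$ and show it is at most $N^\alpha/2$ when $\vy\in \cR(\vx, \bzeta)$ under the stated constraints on $\bzeta$; the conclusion then follows from
$$
|W_{\va, d}(\vy; N)|\ge |W_{\va, d}(\vx; N)|-|W_{\va, d}(\vx; N)-W_{\va, d}(\vy; N)|\ge N^\alpha-\tfrac12 N^\alpha.
$$
I would work directly with the defining expression
$$
W_{\va, d}(\vx; N)=\sum_{h=-N}^N \frac{1}{|h|+1}\,|T_h(\vx)|,\qquad T_h(\vx)=\sum_{n=1}^N a_n\,\e\(hn/N+x_1n+\ldots+x_dn^d\),
$$
rather than the single-sum form with coefficients $b_N(n)$, since those coefficients depend on $\vx$ and would obstruct a direct comparison of $W_{\va, d}(\vx; N)$ and $W_{\va, d}(\vy; N)$.

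Next, the reverse triangle inequality $\bigl||T_h(\vx)|-|T_h(\vy)|\bigr|\le |T_h(\vx)-T_h(\vy)|$ reduces matters to the inner sums. Here the factor $\e(hn/N)$ is common, so
$$
T_h(\vx)-T_h(\vy)=\sum_{n=1}^N a_n\,\e(hn/N)\left[\e\(x_1n+\ldots+x_dn^d\)-\e\(y_1n+\ldots+y_dn^d\)\right].
$$
Using the elementary bound $|\e(u)-\e(v)|\le 2\pi|u-v|$ together with $|x_j-y_j|\le \zeta_j\le N^{\alpha-j-1-\eps}$ and $n\le N$, the phase difference is controlled by
$$
\left|\sum_{j=1}^d (x_j-y_j)n^j\right|\le \sum_{j=1}^d N^{\alpha-j-1-\eps}N^j=d\,N^{\alpha-1-\eps}.
$$
This is precisely where the shape of $\bzeta$ enters: the exponents $\alpha-j-1-\eps$ are engineered so that every term $\zeta_j n^j$ contributes the same size $N^{\alpha-1-\eps}$ for $n$ up to $N$.

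Finally, inserting $|a_n|=n^{o(1)}\le N^{o(1)}$ and summing the at most $N$ terms gives $|T_h(\vx)-T_h(\vy)|\le N^{\alpha-\eps+o(1)}$ uniformly in $h$, and then $\sum_{h=-N}^N (|h|+1)^{-1}\ll \log N=N^{o(1)}$ yields
$$
\left|W_{\va, d}(\vx; N)-W_{\va, d}(\vy; N)\right|\le N^{\alpha-\eps+o(1)}.
$$
Since $N^{\alpha-\eps+o(1)}=N^\alpha\cdot N^{-\eps+o(1)}$ and $N^{-\eps+o(1)}\to 0$, for all sufficiently large $N$ (depending on $\eps$) this is at most $N^\alpha/2$, completing the argument. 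This lemma is elementary, so there is no genuine obstacle; the one point requiring care is the uniformity in $h$, namely that the estimate for $T_h(\vx)-T_h(\vy)$ must be independent of $h$. This holds because $|\e(hn/N)|=1$, so the logarithmic size of $\sum_h (|h|+1)^{-1}$ is absorbed into the harmless factor $N^{o(1)}$, and the $N^{-\eps}$ margin produced by the choice of $\bzeta$ is what leaves room for the constant $1/2$.
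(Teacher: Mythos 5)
Your proposal is correct and follows essentially the same route as the paper: bound the difference of the inner sums for each $h$ via $|\e(u)-\e(v)|\le 2\pi|u-v|$ and the choice of $\bzeta$, obtain $N^{\alpha-\eps+o(1)}$ uniformly in $h$, absorb the $\log N$ from summing the weights $(|h|+1)^{-1}$, and conclude by the triangle inequality for $N$ large. Your explicit treatment of the reverse triangle inequality on $\bigl||T_h(\vx)|-|T_h(\vy)|\bigr|$ and of the uniformity in $h$ is a slightly more careful write-up of exactly the paper's argument.
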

\begin{proof} For any integer $h$ with $|h|\le N$ we have  
\begin{align*}
\sum_{n=1}^{N}   a_n\e(h n/N) & \left(\e\(x_1n+\ldots+x_dn^d \) - \e\(y_1n+\ldots+y_dn^d \) \right)\\
& \quad\quad\quad\quad\quad\quad \ll \sum_{n=1}^{N} \sum_{j=1}^d  a_n\zeta_j n^j\le N^{\alpha-\eps/2}.
\end{align*}
The last estimate holds for all sufficiently large   $N$. By Lemma~\ref{lem:completion} we obtain 
$$
|W_{\va, d}( \vx; N)-W_{\va, d}(\vy; N)| \ll N^{\alpha-\eps/2} \log N \le N^{\alpha}/2,
$$
which holds for all  sufficiently large  $N$ and thus the result follows. 
\end{proof}

\subsection{Covering the large values of exponential  sums}

In analogy to~\cite[Lemma~3.7]{ChSh-New} we obtain the following result. 

\begin{lemma}
\label{lem:3r-cover}  
Let $\mu$ be a $f$-regular Radon measure  as in Theorem~\ref{thm:HD}.  
Let $0<\alpha<1$ and $\eps>0$ be a small parameter and let
$$
\bzeta(\varepsilon) = \(\zeta_1(\varepsilon), \ldots, \zeta_d(\varepsilon)\)
$$ 
where 
$$
\zeta_j(\varepsilon)=N^{\alpha-j-1-\varepsilon}, \qquad  j=1, \ldots, d.  
$$
Then for some   
$$
L\le N^{\rho(\vartheta-\alpha)+o(1)}f\(\bzeta(\varepsilon)\)^{-1},
$$ 
there exist $\vx_1, \ldots, \vx_L \in \spt \mu$ such that 
$$
\{\vx\in \spt \,\mu:~|W_{\va, d}(\vx; N)|\ge N^{\alpha}\} \subseteq \bigcup_{\ell=1}^{L} \cR\(\vx_\ell, 3\bzeta(\varepsilon)\),
$$
and 
$$
\cR\(\vx_i, \bzeta(\varepsilon)\) \cap \cR\(\vx_j, \bzeta(\varepsilon)\)=\emptyset, \qquad  1\le i\neq j\le L.
$$
\end{lemma}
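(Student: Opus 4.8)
The plan is to combine a Vitali-type maximal disjointness selection with the mean value hypothesis (through Markov's inequality) and the continuity Lemma~\ref{lem:cont}. Throughout, write $E=\{\vx\in\spt\mu:~|W_{\va,d}(\vx;N)|\ge N^{\alpha}\}$ for the level set to be covered, and abbreviate $\bzeta=\bzeta(\varepsilon)$ with $\zeta_j=N^{\alpha-j-1-\varepsilon}$; this is precisely the scale at which Lemma~\ref{lem:cont} operates, which is why it is chosen here.

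First I would record that $W_{\va,d}(\vx;N)$ is itself a Weyl sum with an admissible double weight, so that the mean value hypothesis of Theorem~\ref{thm:HD} applies to it. Indeed, by the completion discussion following Lemma~\ref{lem:completion} one may write $W_{\va,d}(\vx;N)=\sum_{n=1}^N a(N,n)\e(x_1n+\ldots+x_dn^d)$ with $a(N,n)=a_nb_N(n)$, and since $a_n=n^{o(1)}$ and $b_N(n)\ll\log N$ the double sequence $a(N,n)$ satisfies condition~\eqref{eq:double a}. Hence the hypothesis gives
$$
\int_{\T_d}\left|W_{\va,d}(\vx;N)\right|^{\rho}\,d\mu(\vx)\le N^{\rho\vartheta+o(1)}.
$$

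Next comes the selection. I would choose $\vx_1,\ldots,\vx_L\in E$ to be a maximal collection of points of $E$ whose boxes $\cR(\vx_\ell,\bzeta)$ are pairwise disjoint; such a collection is automatically finite, since the boxes are disjoint, each has $\mu$-mass at least $f(\bzeta)>0$ by $f$-regularity, and $\mu(\T_d)<\infty$. This delivers the disjointness conclusion at once. For the covering, note that by maximality every $\vx\in E$ has $\cR(\vx,\bzeta)$ meeting some $\cR(\vx_\ell,\bzeta)$; a common point forces $|x_k-x_{\ell,k}|\le 2\zeta_k<3\zeta_k$ in each coordinate $k$, so $\vx\in\cR(\vx_\ell,3\bzeta)$, which is exactly the asserted inclusion.

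Finally I would bound $L$ by squeezing the total mass of the disjoint boxes between two estimates. Since each centre satisfies $|W_{\va,d}(\vx_\ell;N)|\ge N^{\alpha}$, the continuity Lemma~\ref{lem:cont} (whose hypothesis on the $\zeta_j$ holds by construction) yields $|W_{\va,d}(\vy;N)|\ge N^{\alpha}/2$ for every $\vy\in\cR(\vx_\ell,\bzeta)$, so each box lies inside $E'=\{\vy\in\T_d:~|W_{\va,d}(\vy;N)|\ge N^{\alpha}/2\}$. Combining the $f$-regular lower bound $\mu(\cR(\vx_\ell,\bzeta))\ge f(\bzeta)$, the disjointness, and Markov's inequality with the mean value bound above,
$$
Lf(\bzeta)\le\sum_{\ell=1}^{L}\mu\(\cR(\vx_\ell,\bzeta)\)=\mu\Big(\bigcup_{\ell=1}^{L}\cR(\vx_\ell,\bzeta)\Big)\le\mu(E')\le\(N^{\alpha}/2\)^{-\rho}N^{\rho\vartheta+o(1)}=N^{\rho(\vartheta-\alpha)+o(1)},
$$
and rearranging gives $L\le N^{\rho(\vartheta-\alpha)+o(1)}f(\bzeta)^{-1}$, as required. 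The main subtlety to watch is the two-scale bookkeeping: the covering must be by the dilated boxes $3\bzeta$ (forcing the selected $\bzeta$-boxes to be disjoint and maximal), while the cardinality bound needs those same $\bzeta$-boxes to sit inside one controlled level set $E'$. This is exactly where Lemma~\ref{lem:cont} is indispensable, since $E$ itself need not contain the boxes; the harmless factor $2^{\rho}$ incurred in passing from $N^{\alpha}$ to $N^{\alpha}/2$ is absorbed into the $N^{o(1)}$.
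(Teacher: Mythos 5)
Your proposal is correct and follows essentially the same route as the paper: a maximal pairwise-disjoint selection of $\bzeta$-boxes centred in the level set, the blow-up to $3\bzeta$ for the covering, Lemma~\ref{lem:cont} to propagate the lower bound $N^{\alpha}/2$ over each selected box, and then $f$-regularity combined with the mean value hypothesis to bound $L$. Your use of Markov's inequality on the enlarged level set $E'$ is just a repackaging of the paper's direct integration of $|W_{\va,d}|^{\rho}$ over the disjoint boxes, and your explicit check that $W_{\va,d}$ is an admissible weighted sum (via $a(N,n)=a_nb_N(n)$) is a welcome clarification of a step the paper leaves implicit.
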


\begin{proof}
Let 
$\cB=\{\vx\in \spt \,\mu:~|W_{\va, d}(\vx; N)|\ge N^{\alpha}\}$,
and let 
$$
\cR_\ell = \cR\(\vx_\ell, \bzeta(\varepsilon)\), \qquad \ell=1, \ldots, L,
$$ 
be a maximal collection of pair-wise disjoint rectangles from
the set $\{\cR\(\vx, \bzeta(\varepsilon)\):~\vx\in \cB\}$. Then,  we observe  
that  the maximality of the family  of the rectangles $\cR_\ell$, $ \ell=1, \ldots, L$, 
implies that  each blow-up rectangle $\cR\(\vx_\ell, 3\bzeta(\varepsilon)\)$ 
implies that 
$$
\cB\subseteq \bigcup_{\vx\in \cB} \cR\(\vx, \bzeta(\varepsilon)\)\subseteq \bigcup_{\ell=1}^{L} \cR\(\vx_\ell, 3\bzeta(\varepsilon)\).
$$

For each $\ell=1, \ldots, L$, applying Lemma~\ref{lem:cont} we have $$
|W_{\va, d}(\vx; N)|\ge N^{\alpha}/2, \quad \forall \vx \in \cR\(\vx_\ell, \bzeta(\varepsilon)\).
$$
Thus, together with the assumption that the measure $\mu$ is a $f$-regular measure on $\T_d$,  we arrive at
\begin{align*}
\int_{\Tor}|W_{\va, d}(\vx; N)|^\rho d \mu(\vx)&\ge \sum_{\ell=1}^{L} \int_{\T_d \,\cap\, \cR_\ell} |W_{\va, d}(\vx; N)|^\rho d\mu(\vx)\\
&\gg  \sum_{\ell=1}^{L} N^{\rho \alpha} \mu(\T_d\cap \cR_\ell)\\
&\gg L N^{\rho\alpha} f\(\bzeta(\varepsilon)\).
\end{align*} 
Therefore,  combining with the mean value bound of Theorem~\ref{thm:HD}, we obtain the desired bound for $L$.
\end{proof}

From  Lemmas~\ref{lem:completion} and~\ref{lem:3r-cover},   we formulate  the following Corollary~\ref{cor:cover}  for the convenience of our applications on estimating the Hausdorff  dimension of the set $\cE_{\va, d, \alpha}\cap \spt \mu$.   Using the same notation
as in Lemma~\ref{lem:3r-cover} we denote
$$
\fR(N)=\{\cR\(\vx_\ell, 3\bzeta(\varepsilon)\):~ \ell=1, \ldots, L\}.
$$ 

\begin{cor}    
\label{cor:cover}
Let $\mu$ be a $f$-regular Radon measure  as in Theorem~\ref{thm:HD}.  
Let $0<\alpha<1$ and let $\eps>0$ be a small parameter. Let $N_i=2^{i}$, $i\in \N$. Then for any $\eta>0$  we have 
$$
\cE_{\va, d, \alpha+\eta} \cap \spt \mu \subseteq \bigcap_{q=1}^{\infty}\bigcup_{i=q}^{\infty} \bigcup_{\cR\in \fR(N_i)} \cR, 
$$
where each $\cR$ of $\fR(N_i)$ has the side length 
 $\bzeta_{i}(\varepsilon)=(\zeta_{i,1}(\varepsilon), \ldots, \zeta_{i, d}(\varepsilon))$ such that 
$$
\zeta_{i, j}(\varepsilon)=N_i^{\alpha-j-1-\eps}, \qquad j=1, \ldots, d,
$$
and furthermore 
$$
\#  \fR(N_i)\le N_{i}^{\rho \vartheta-\rho \alpha} f\(\bzeta_i(\varepsilon)\)^{-1}.
$$
\end{cor}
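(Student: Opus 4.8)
The plan is to derive Corollary~\ref{cor:cover} directly from Lemma~\ref{lem:3r-cover} together with the completion technique of Lemma~\ref{lem:completion}. The key observation is that the exceptional set $\cE_{\va, d, \alpha+\eta}$ consists of those $\vx$ for which $|S_{\va, d}(\vx; N)| \ge N^{\alpha+\eta}$ for infinitely many $N$, whereas Lemma~\ref{lem:3r-cover} is phrased in terms of the completed sums $W_{\va, d}(\vx; N)$. First I would use Lemma~\ref{lem:completion}, which gives $S_{\va, d}(\vx; M) \ll W_{\va, d}(\vx; N)$ for $1 \le M \le N$, to pass from large values of $S_{\va, d}$ to large values of $W_{\va, d}$. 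Concretely, if $|S_{\va, d}(\vx; N)| \ge N^{\alpha+\eta}$, then for $N$ large enough the absorbing factor implied by $\ll$ is dominated by $N^{\eta}$, so $|W_{\va, d}(\vx; N)| \ge N^{\alpha}$. This is where the extra $\eta$ in the index $\alpha+\eta$ is consumed.

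Next I would set up the limsup structure. Since $\vx \in \cE_{\va, d, \alpha+\eta} \cap \spt\mu$ means $|S_{\va, d}(\vx; N)| \ge N^{\alpha+\eta}$ for infinitely many $N \in \N$, I would first reduce from all $N$ to the dyadic scales $N_i = 2^i$. For any $N$ with $N_i \le N < N_{i+1}$, applying Lemma~\ref{lem:completion} with $M = N$ and the dyadic upper endpoint $N_{i+1}$ lets me bound $S_{\va, d}(\vx; N)$ by $W_{\va, d}(\vx; N_{i+1})$, so a large value of $S_{\va, d}$ at scale $N$ forces a large value of $W_{\va, d}$ at the next dyadic scale (again absorbing the monotone and logarithmic losses into the $\eta$ margin). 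Hence infinitely many $N$ with large $|S_{\va, d}(\vx; N)|$ produce infinitely many indices $i$ with $|W_{\va, d}(\vx; N_i)| \ge N_i^{\alpha}$. This is exactly the statement that
$$
\cE_{\va, d, \alpha+\eta} \cap \spt\mu \subseteq \bigcap_{q=1}^{\infty} \bigcup_{i=q}^{\infty} \{\vx \in \spt\mu :~ |W_{\va, d}(\vx; N_i)| \ge N_i^{\alpha}\}.
$$

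It then remains to cover each set $\{\vx \in \spt\mu :~ |W_{\va, d}(\vx; N_i)| \ge N_i^{\alpha}\}$, and this is precisely the content of Lemma~\ref{lem:3r-cover}. Applying that lemma at each scale $N_i$ with the parameter $\eps$ gives a finite collection $\fR(N_i)$ of rectangles $\cR(\vx_\ell, 3\bzeta_i(\varepsilon))$ with side lengths $\zeta_{i,j}(\varepsilon) = N_i^{\alpha-j-1-\eps}$ that covers this set, and with cardinality bound $\# \fR(N_i) = L \le N_i^{\rho(\vartheta-\alpha)+o(1)} f(\bzeta_i(\varepsilon))^{-1}$. Substituting these covers into the limsup inclusion above yields the claimed containment, and recording the side lengths and the cardinality bound completes the proof; I would note that the $o(1)$ in the exponent of Lemma~\ref{lem:3r-cover} can be absorbed since the statement of the corollary writes the cardinality bound as $N_i^{\rho\vartheta - \rho\alpha} f(\bzeta_i(\varepsilon))^{-1}$ up to the implied negligible factor.

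The main obstacle I anticipate is bookkeeping rather than any deep difficulty: one must verify carefully that the passage from $S_{\va, d}$ at an arbitrary scale $N$ to $W_{\va, d}$ at a dyadic scale $N_i$ (or $N_{i+1}$) loses at most a factor that is absorbed by the gap $\eta$ between the indices $\alpha+\eta$ and $\alpha$, uniformly for all sufficiently large $i$. The logarithmic factor $\log N$ arising from Lemma~\ref{lem:completion} and the mismatch between the scale $N$ and its dyadic neighbour must both be controlled, but since $N^{\eta}$ eventually dominates any power of $\log N$ and any bounded ratio $N_{i+1}/N_i = 2$, this is routine once stated precisely.
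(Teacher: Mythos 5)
Your proposal is correct and matches the paper's proof in all essentials: both reduce to dyadic scales $N_i=2^i$ via Lemma~\ref{lem:completion} (you argue the implication directly, the paper argues the contrapositive, which is the same thing), establish the inclusion into $\bigcap_q\bigcup_{i\ge q}\cB_i$ with $\cB_i=\{\vx\in\spt\mu:|W_{\va,d}(\vx;N_i)|\ge N_i^{\alpha}\}$, and then invoke Lemma~\ref{lem:3r-cover} at each scale for the rectangles and the cardinality bound. The bookkeeping you flag (absorbing the logarithmic loss and the dyadic mismatch into the $\eta$ margin) is exactly how the paper handles it.
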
 

\begin{proof} 
We continue to use the same notation as in Lemmas~\ref{lem:completion} and~\ref{lem:3r-cover}. For each $i\in \N$ and $N_i=2^{i}$
let 
$$
\cB_i=\{\vx\in \spt \,\mu:~|W_{\va, d}(\vx; N_i)|\ge N_i^{\alpha}\}.
$$

We intend to show that for any $\eta>0$  we have 
\begin{equation}
\label{eq:subset}
\cE_{\va, d, \alpha+\eta} \cap \spt \mu \subseteq \bigcap_{q=1}^{\infty}\bigcup_{i=q}^{\infty} \cB_i.
\end{equation}

Let $\vx\in  \cE_{\va, d, \alpha+\eta} \cap \spt \mu$. 
Suppose that 
$$
\vx \notin \bigcap_{q=1}^{\infty}\bigcup_{i=q}^{\infty} \cB_i.
$$
Then there exists $i_{\vx}$ such that for all $i\in \N$, $i\ge i_{\vx}$ implies 
$$
|W_{\va, d}(\vx; N_i)|\le N_i^{\alpha}.
$$
Clearly for any $N>N_{i_\vx}$ there exists $i\ge i_{\vx}$ such that 
$$
N_{i}\le N<N_{i+1}.
$$
By Lemma~\ref{lem:completion} we arrive at 
$$
|S_{\va, d}(\vx; N)|\ll |W_{\va, d}(\vx; N_{i+1})|\ll  N^{\alpha}.
$$
Thus we have a contradiction with our condition $\vx\in \cE_{\va, d, \alpha+\eta}\cap \spt \mu$. Therefore, we deduce that 
$$
\vx \in \bigcap_{q=1}^{\infty}\bigcup_{i=q}^{\infty} \cB_i,
$$
and thus we have~\eqref{eq:subset}. Combining this with Lemma~\ref{lem:3r-cover} we obtain the desired result.
\end{proof}

\subsection{Bounds on exponential integrals}  
For bounding various exponential integrals (or oscillatory integrals) we  use the following
{\it lemma of van der Corput\/}, see~\cite[Theorem~14.2]{Mattila2015}. 

\begin{lemma}
\label{lem:var}  
Let $\varphi$ be a smooth function on $\R$ with $|\varphi^{(k)}(x)|\ge Q>0$ uniformly over $x\in [a, b]$ and for some $k\in\{1, 2, \ldots\}$. Then we have 
$$
\int_{a}^{b}\e(\varphi(x)) dx \ll Q^{-1/k},
$$
provided that 
\begin{itemize}
\item[(i)] either  $k=1$ and the derivative $\varphi'$ is a monotone  function   over  interval $[a, b]$; 
\item[(ii)] or  $k\ge 2$.
\end{itemize}
\end{lemma}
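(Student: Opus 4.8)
The plan is to establish the lemma by induction on the order $k$ of the derivative, following the classical van der Corput argument; since the statement is quoted from~\cite[Theorem~14.2]{Mattila2015} one could simply cite it, but a self-contained proof is short. \textbf{Base case $k=1$.} Here $|\varphi'(x)|\ge Q$ and $\varphi'$ is monotone on $[a,b]$. Since $\frac{d}{dx}\e(\varphi(x))=2\pi i\,\varphi'(x)\,\e(\varphi(x))$, I would write $\e(\varphi(x))=\frac{1}{2\pi i\,\varphi'(x)}\frac{d}{dx}\e(\varphi(x))$ and integrate by parts. The boundary term is at most $\frac{1}{2\pi}\(|\varphi'(a)|^{-1}+|\varphi'(b)|^{-1}\)\le \frac{1}{\pi Q}$. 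For the remaining integral, monotonicity of $\varphi'$ (hence of $1/\varphi'$) makes $x\mapsto 1/\varphi'(x)$ of total variation $|1/\varphi'(b)-1/\varphi'(a)|\le 2/Q$, so that contribution is also $\ll Q^{-1}$. Combining gives $\int_a^b \e(\varphi(x))\,dx\ll Q^{-1}$, which is $Q^{-1/k}$ for $k=1$.

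\textbf{Inductive step $k\ge 2$.} Since $|\varphi^{(k)}|\ge Q>0$ and $\varphi^{(k)}$ is continuous, it keeps a constant sign; assume $\varphi^{(k)}\ge Q$ (the other case is symmetric), so $\varphi^{(k-1)}$ is strictly increasing and has at most one zero $c\in[a,b]$. For a parameter $\delta>0$ to be chosen later, I would excise the interval $(c-\delta,c+\delta)$: on its complement within $[a,b]$ one has $|\varphi^{(k-1)}(x)|\ge Q\delta$ by the mean value theorem, and this complement is a union of at most two intervals on which the induction hypothesis applies with order $k-1$ and lower bound $Q\delta$. (When $k-1=1$ the monotonicity of $\varphi'$ required by the base case is automatic, since $\varphi''\ge Q>0$.) Each such interval contributes $\ll(Q\delta)^{-1/(k-1)}$, while the excised piece contributes at most its length $2\delta$ trivially. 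Hence $\int_a^b\e(\varphi(x))\,dx\ll \delta+(Q\delta)^{-1/(k-1)}$, and choosing $\delta\asymp Q^{-1/k}$ balances the two terms to give the claimed bound $Q^{-1/k}$.

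The genuinely delicate point is the inductive step: one must locate the single zero of $\varphi^{(k-1)}$, cut out a neighbourhood of the correct width, and secure the uniform lower bound $|\varphi^{(k-1)}|\ge Q\delta$ off that neighbourhood so that the induction hypothesis can be fed in, taking care that the blown-up constants accumulating through the $k$ steps remain absorbed into the implied constant (which may depend on $k$). The final optimization of $\delta+(Q\delta)^{-1/(k-1)}$ is routine calculus, and the base case is standard integration by parts presenting no difficulty.
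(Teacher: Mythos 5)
Your proposal is correct, but note that the paper does not actually prove this lemma at all: it is quoted verbatim from~\cite[Theorem~14.2]{Mattila2015} and used as a black box. Your self-contained argument is the classical van der Corput proof and is sound: the base case by integration by parts (using that $|\varphi'|\ge Q$ forces $\varphi'$ to keep one sign, so $1/\varphi'$ is monotone with total variation at most $2/Q$), and the induction by excising a $\delta$-neighbourhood of the zero of $\varphi^{(k-1)}$ and balancing $\delta + (Q\delta)^{-1/(k-1)}$ at $\delta = Q^{-1/k}$. The only point you gloss over is the case where $\varphi^{(k-1)}$ has no zero in $[a,b]$; there one takes $c$ to be the endpoint minimizing $|\varphi^{(k-1)}|$ (necessarily $a$ when $\varphi^{(k)}\ge Q$ and $\varphi^{(k-1)}>0$), and the same mean value estimate $|\varphi^{(k-1)}(x)|\ge Q|x-c|\ge Q\delta$ off the excised piece goes through, so this is a trivial omission rather than a gap. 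Your remark that the implied constant accumulates a $k$-dependence is consistent with the paper's conventions, since constants there are allowed to depend on $d$ and the lemma is only invoked with $k\le d$.
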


We now present  the following general  bound on exponential integrals with polynomial arguments which is perhaps known already. 
We give a proof here for the completeness.   

\begin{lemma}
\label{lem:moment-decay}
For  any interval $[a, b]\subseteq [0, 1]$ we have   
$$
\int_{a}^{b} \e(t\xi_1+\ldots+t^{d}\xi_d) dt \ll  \(1+\|\bxi\|\)^{-1/d}, \quad \bxi\in \R^d.
$$
\end{lemma}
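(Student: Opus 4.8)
My plan is to deduce the estimate from van der Corput's inequality (Lemma~\ref{lem:var}) after splitting $[a,b]$ into a bounded number of subintervals on each of which a single derivative of the phase is uniformly large. Write $\varphi(t)=\xi_1 t+\ldots+\xi_d t^d$ and set $M=\max_{1\le j\le d}|\xi_j|$, noting $M\asymp \|\bxi\|$ with implied constants depending only on $d$. When $M\le 1$ the trivial bound $\left|\int_a^b \e(\varphi(t))\,dt\right|\le b-a\le 1\ll (1+\|\bxi\|)^{-1/d}$ already suffices, so I may assume $M\ge 1$ and aim to prove $\left|\int_a^b \e(\varphi(t))\,dt\right|\ll M^{-1/d}$.

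The crux of the argument, and the step I expect to be the main obstacle, is the uniform pointwise lower bound
$$
\max_{1\le k\le d}\left|\varphi^{(k)}(t)\right|\gg M, \qquad t\in[0,1].
$$
The point is that \emph{some} coefficient being large must force \emph{some} derivative to be large, uniformly in $t$, with no degeneration as $t\to 0$. To see this I would use the identity $\varphi^{(k)}(t)=\sum_{j=k}^{d}\frac{j!}{(j-k)!}\,\xi_j t^{j-k}$, which expresses the vector $\left(\varphi'(t),\ldots,\varphi^{(d)}(t)\right)$ as $A(t)\bxi$, where $A(t)$ is upper triangular with diagonal entries $k!$ and with all entries polynomial in $t$ and hence bounded on $[0,1]$. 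Crucially $\det A(t)=\prod_{k=1}^{d}k!$ is a nonzero \emph{constant}, so the entries of $A(t)^{-1}=(\det A(t))^{-1}\operatorname{adj}A(t)$ are polynomials in $t$ and are uniformly bounded on $[0,1]$. Therefore $\|\bxi\|_\infty\ll \max_{k}|\varphi^{(k)}(t)|$ uniformly for $t\in[0,1]$, which is the displayed bound.

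With this in hand I would fix $\lambda=c(d)M$ with $c(d)>0$ small enough that the max above exceeds $\lambda$ on $[0,1]$, and then partition $[a,b]$ by adjoining to $\{a,b\}$ all roots in $[a,b]$ of the polynomial equations $\varphi^{(k)}(t)=\pm\lambda$ for $1\le k\le d$ together with all zeros of $\varphi''$. Each of these is a polynomial of degree at most $d$, so there are boundedly many breakpoints (depending only on $d$), yielding boundedly many subintervals $J$. On each $J$ every $|\varphi^{(k)}|$ stays on one side of $\lambda$, so by the lower bound there is some $k=k(J)$ with $|\varphi^{(k)}|\ge \lambda$ throughout $J$; moreover $\varphi''$ has constant sign on $J$, so $\varphi'$ is monotone there. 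Applying Lemma~\ref{lem:var} on $J$ with this $k$ — case~(i) when $k=1$, using monotonicity of $\varphi'$, and case~(ii) when $k\ge 2$ — gives $\left|\int_J \e(\varphi(t))\,dt\right|\ll \lambda^{-1/k}\ll M^{-1/k}\le M^{-1/d}$, since $k\le d$ and $M\ge 1$. Summing over the boundedly many subintervals yields $\left|\int_a^b \e(\varphi(t))\,dt\right|\ll M^{-1/d}\asymp (1+\|\bxi\|)^{-1/d}$, as required. The routine parts here are the counting of breakpoints and the monotonicity bookkeeping; the only genuinely delicate point is the uniform non‑degeneracy bound of the second paragraph.
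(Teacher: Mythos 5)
Your proof is correct, and while it rests on the same two pillars as the paper's (van der Corput's lemma plus the elementary reduction $\|\bxi\|^{-1/k}\le\|\bxi\|^{-1/d}$ for $\|\bxi\|\ge 1$), it reaches the key step — exhibiting a derivative of the phase of size $\gg\|\bxi\|$ — by a genuinely different route. The paper runs a greedy case analysis on the coefficients: take $k_0$ with $|\xi_{k_0}|$ maximal; either $|\xi_{k_0}|$ dominates the tail $2\sum_{j>k_0}\binom{j}{k_0}|\xi_j|b^j$, in which case $|\varphi^{(k_0)}(t)|\gg\|\bxi\|$ on all of $[a,b]$ and no subdivision is needed, or else some higher-index coefficient is already $\gg\|\bxi\|$ and the argument iterates (at most $d$ times). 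You instead prove the pointwise non-degeneracy bound $\max_{k}|\varphi^{(k)}(t)|\gg\|\bxi\|$ for every $t\in[0,1]$, via the observation that $(\varphi'(t),\ldots,\varphi^{(d)}(t))=A(t)\bxi$ with $A(t)$ triangular of constant nonzero determinant $\prod_{k=1}^{d}k!$, and then split $[a,b]$ at the boundedly many level crossings $\varphi^{(k)}(t)=\pm\lambda$ and zeros of $\varphi''$ so that on each piece one fixed derivative exceeds $\lambda=c(d)\|\bxi\|$. Your linear-algebra lemma is arguably cleaner and more robust (it would survive any system of exponents for which the analogous determinant stays bounded away from zero), at the cost of an extra interval decomposition; it also has the merit of explicitly securing the monotonicity of $\varphi'$ required by case~(i) of Lemma~\ref{lem:var} when $k=1$, a point the paper's proof passes over in silence. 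The only loose end is the degenerate possibility that some $\varphi^{(k)}\mp\lambda$ vanishes identically (e.g.\ $k=d$ with $d!\,\xi_d=\pm\lambda$); then one simply adds no breakpoints for that equation, and the bounded count of subintervals is unaffected.
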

\begin{proof}
Without  loss of generality we can asume $\|\bxi\|\ge 1$ in the following. Let $1\le k_0\le d$ be such that 
\begin{equation}
\label{eq:max}
|\xi_{k_0}|=\max\{|\xi_1|, \ldots, |\xi_d|\}.
\end{equation}

We now present  a case-by-case argument which depends on the value of $k_0$ and $|\xi_{k_0}|$.

{\bf Case 1.} Suppose that $k_0=d$.  Let 
$$
\varphi(t)= t\xi_1+\ldots+t^{d}\xi_d,
$$
then by~\eqref{eq:max} we obtain
$$
|\varphi^{(d)}(t)|=d!\,\xi_d\gg \|\bxi\|, \qquad  t\in [a, b].
$$
Thus  by Lemma~\ref{lem:var} we obtain the desired bound in this case. 

{\bf Case 2.} Suppose that $1\le k_0<d$ and 
$$
|\xi_{k_0}|\ge 2 \sum_{j=k_0+1}^{d} \binom{j}{k_0}|\xi_j| b^j.
$$
It follows that  for any $t\in [a, b]$ we have 
\begin{align*}
|\varphi^{(k_0)}(t)|&=k_0! \left | \xi_{k_0}  + \sum_{j=k_0+1}^{d} \binom{j}{k_0}\xi_j t^{j-k}\right |\\
&\ge k_0! \left (|\xi_{k_0}| - \sum_{j=k_0+1}^{d} \binom{j}{k_0}|\xi_j| b^{j-k}\right )\\
&\ge k_0! |\xi_{k_0}|/2.
\end{align*}
Applying Lemma~\ref{lem:var} and~\eqref{eq:max} we obtain ($\|\bxi\|\ge 1$)
$$
\int_{a}^{b}\e(t\xi_1+\ldots+t^d\xi_d)dt \ll \|\bxi\|^{-1/k_0}\le \|\bxi\|^{-1/d},
$$ 
which concludes this case.

{\bf Case 3.}
Suppose that $1\le k_0<d$ and 
$$
|\xi_{k_0}|< 2 \sum_{j=k_0+1}^{d} \binom{j}{k_0}|\xi_j| b^j.
$$
Then there exists $ k_0+1\le k_1\le d$ such that 
$$
|\xi_{k_1}| \gg \|\bxi\|.
$$
Now applying the same arguments as in  {\bf Case~1} and {\bf Case~2} to $k_1$, then either we obtain the desired bound or there exists $k_2\ge k_1+1$ such that 
$$
|\xi_{k_2}|\gg \|\bxi\|.
$$  
Iterating this argument at most $d$ times yields the desired result.
\end{proof}

\section{Proofs of general results}

\subsection{Proof of Theorem~\ref{thm:MVT}}

For  $n, m\in \N$ let 
\begin{equation}\label{eq:norm}
\bxi_{n, m}=(n-m, \ldots, n^d-m^d).
\end{equation}  
Clearly we have 
$$
|n^d-m^d|\ll \|\bxi_{n, m}\|\ll |n^d-m^d|.
$$
Let 
$$
\cI =\int_{\Tor} \left|\sum_{n=1}^{N} a_n\e(x_1 n+\ldots +x_d n^d)\right|^2 d \mu(\vx).
$$
Expanding the square and changing the order of summation,  we have
\begin{align*}
\cI  &=\int_{\Tor}\sum_{1\le n, m\le N} a_n\overline{a_m} \e\(\sum_{i=1}^d x_i (n^i-m^i)\) d\mu(\vx)\\
&= \sum_{1\le n, m\le N} a_n\overline{a_m}\, \widehat{\mu}\(\bxi_{n, m}\).
\end{align*}

Note that $\widehat{\mu}(\v0)=\mu(\T_d)<\infty$, where the second inequality holds by the definition of Radon measure. 
For $n\neq m$ applying the decay condition~\eqref{eq:decay} and~\eqref{eq:norm} we obtain 
$$
\widehat{\mu}\(\bxi_{n, m}\)\ll (n^d-m^{d})^{-\sigma}.
$$ 
Hence, using the symmetry of $m$ and $n$ we obtain
\begin{align*}
\cI&\le N^{1+o(1)}+N^{o(1)}\sum_{1\le m \neq n\le N} \(n^d-m^d\)^{-\sigma}\\
&\le N^{1+o(1)}+N^{o(1)}\sum_{m=1}^N \sum_{k=1}^N \((m+k)^d-m^d\)^{-\sigma}\\
&\le N^{1+o(1)}+N^{o(1)}\sum_{m=1}^N \sum_{k=1}^N \(k m^{d-1}\)^{-\sigma} \\
& \le  N^{1+o(1)}+N^{o(1)}\(1 + N^{1 -(d-1)\sigma}\)  \(1 + N^{1 -\sigma}\)  \le    N^{1+o(1)} 
\end{align*}
provided $\sigma \ge 1/d$.

\begin{remark}\label{rem:optimal}  For $\sigma>1/d$ and $d\ge 2$ the proof of Theorem~\ref{thm:MVT} implies  
\begin{align*} 
\int_{\Tor} & \left|\sum_{n=1}^{N} a_n\e(x_1 n+\ldots +x_d n^d)\right|^2 d \mu(\vx) 
-N\mu(\T_d) \sum_{n=1}^{N} |a_n|^2 \\
&\qquad \qquad \ll N^{o(1)}\(N^{1-\sigma}+N^{1-(d-1)\sigma}+N^{2-d\sigma}\)=
N^{1-\kappa+o(1)}, 
\end{align*} 
where $\kappa = \min\{\sigma, d\sigma - 1\} > 0$. 
Thus for  the case $\sigma>1/d$  we obtain an asymptotic formula with a power saving for the square mean values of $S_{\va, d}(\vx; N)$. In particular, this implies that the bound in Theorem~\ref{thm:MVT} is optimal when $\sigma>1/d$.  
\end{remark}

\subsection{Proof of Theorem~\ref{thm:MVT-higher-box}}

For $\bxi=(\xi_1, \ldots, \xi_d)\in \Z^{d}$ let  
 $J_{\va, N}(\bxi)$ be the  number of solutions to the system of equations
\begin{align*}
\sum_{j=1}^{s(d)}  n_j^i &- \sum_{j=s(d)+1}^{2s(d)}  n_j^i =\xi_i,   \qquad  i=1, \ldots, d, \\
 n_j  & = 1, \ldots, N,  \qquad   j =1, \ldots, 2s(d),
\end{align*} 
counted with the weights 
\begin{equation}
\label{eq:weight}
\prod_{j=1}^{s(d)} a_{n_j}  \prod_{j=s(d)+1}^{2s(d)} \overline{a_{ n_j}} = N^{o(1)}.
\end{equation}

If $a_n=1$ for all $n\in \N$ then we simply denote 
$$
J_N(\bxi)=J_{\va, N}(\bxi). 
$$
From the orthogonality of exponential functions, it is not hard to see that 
$$
J_N(\bxi) = \int_{\Tor}  \left|\sum_{n=1}^{N} \e(x_1 n+\ldots +x_d n^d)\right|^{2 s(d)}\e(-x_1 \xi_1-\ldots -x_d\xi_d) \, d  \vx.
$$ 
Hence, 
$$
J_N(\bxi)\le J_N(\v0), \qquad \forall \bxi\in \Z^{d}.
$$

Recall that the Vinogradov mean value theorem~\eqref{eq:MVT} implies  
\begin{equation}
\label{eq:MVT-Max}
J_N(\v0)\le N^{s(d)+o(1)}, \qquad N\rightarrow \infty.
\end{equation} 

From~\eqref{eq:weight} and~\eqref{eq:MVT-Max} we obtain
\begin{equation}
\label{eq:zero-max}
J_{\va, N}(\bxi)\le N^{o(1)} J_{N}(\bxi)\le N^{s(d)+o(1)}.
\end{equation}

Note that $J_{\va, N}(\bxi)=0$ if there is $i=1, \ldots, d$ such that 
$$
|\xi_i|\ge s(d)N^{i}.
$$

For each $N$ let 
\begin{equation}
\label{eq:DN}
\cD_N=\{\bxi\in \Z^{d}:~|\xi_i|\le s(d)N^{i},\  i=1,\ldots, d\}.
\end{equation}

Expanding sums $|S_{\va, d}(\vx; N)|^{2s(d)} $ and changing the order of summation, and combining with~\eqref{eq:zero-max} and the definition of $\cD_N$, for 
$$
\fI =\int_{\Tor}   \left|\sum_{n=1}^{N} a_n\e(x_1 n+\ldots +x_d n^d)\right|^{2 s(d)} d \mu(\vx)
$$  
we derive 
\begin{equation} \label{eq:A}
\fI \le  \sum_{\bxi\in \Z^{d}} J_{\va, N}(\bxi)\left |\widehat{\mu}(\bxi)\right|\le N^{s(d)+o(1)} \varXi_N,
\end{equation} 
where
$$
\varXi_N = \sum_{\bxi\in \cD_N}(1+\|\bxi\|)^{-\sigma}.
$$

Taking  dyadic decomposition  for the range $1\le \|\bxi\|  \ll N^d$ we derive that there exists a positive 
integer  $H < 2d N^d$ 
 such that 
\begin{equation}
\label{eq:XiN}
\varXi_N \ll H^{-\sigma} \#\cL_{N, H}  \log N,
\end{equation} 
where 
$$
\cL_{N, H}=\{\bxi\in \cD_N: H\le \|\bxi\|<2H\}.
$$
Assume that 
\begin{equation}
\label{eq:H}
N^i\le H <N^{i+1}
\end{equation}
for some $i=0, 1, \ldots,  d$.

From~\eqref{eq:DN} and~\eqref{eq:H}  we obtain 
$$
\#\cL_{N, H}\ll N^{s(i)}H^{d-i},
$$
where $s(i)$ is given by~\eqref{eq:sq}. 
By~\eqref{eq:XiN} we arrive at 
\begin{equation}
\label{eq:here}
\varXi_N\ll N^{s(i)} H^{d-\sigma-i}  \log N.
\end{equation}

We now formulate a case-to-case argument which depends on the value $i$ and $\sigma$.

{\bf Case 1.} Suppose that $i\le d-\sigma$. From~\eqref{eq:H} and~\eqref{eq:here} we obtain 
$$
\varXi_N\ll N^{s(i)+(i+1)(d-\sigma-i)}.
$$
Let $f(t)=t(t+1)/2+(t+1)(d-\sigma-t)$ then $f$ attains the maximal value at  
$$
t_0=d-\sigma-1/2.
$$  
and since $f$ is quadratic its largest  value at an integer argument is attained at 
\begin{equation}
\label{eq:i0}
i_0=  \cl{t_0} = \cl{d-\sigma-1/2}.
\end{equation}  
It follows that for each $i=0, 1, \ldots, d$ we have 
\begin{equation}
\label{eq:case1}
\varXi_N\ll N^{s(i)+(i+1)(d-\sigma-i)}  \log N \le N^{s(i_0)+(i_0+1)(d-\sigma-i_0)}  \log N.
\end{equation}

It remains to observe that the definition of the function $\cl{x}$ implies 
$\cl{x-1/2}\le x$, thus we obtain $i_0\le d-\sigma$ is within the range under 
consideration.  

{\bf Case 2.} Suppose that $i>d-\sigma$. Then from~\eqref{eq:H} and~\eqref{eq:here} we obtain 
$$
\varXi_N\ll N^{s(i)+i(d-\sigma-i)}.
$$
Let $g(t)=t(t+1)/2+t(d-\sigma-t)$ then $g$ attains the maximal value at 
$$
u_0=d-\sigma+1/2.
$$
As before, it now follows that for each $i=0, 1, \ldots, d$ we have 
\begin{equation}
\label{eq:case2}
\varXi_N\ll N^{s(i)+i(d-\sigma-i)}\le N^{s(j_0)+j_0(d-\sigma-j_0)},
\end{equation}
where 
\begin{equation}
\label{eq:j0}
j_0= \cl{u_0}= \cl{d-\sigma+1/2}.
\end{equation}
Again the definition of $\cl{x}$ gives $\cl{x+1/2}>x$ ,
thus we  see that $j_0>d-\sigma$ is an admissible value.

Combining~\eqref{eq:case1} and~\eqref{eq:case2}, we derive that  
\begin{equation}
\label{eq:case-case}
\varXi_N \ll N^{s(i_0)+(i_0+1)(d-\sigma-i_0)}+N^{s(j_0)+j_0(d-\sigma-j_0)},
\end{equation}
where $i_0, j_0$ are given at~\eqref{eq:i0} and~\eqref{eq:j0}, respectively.  

Observe that the definition of $\cl{x}$ implies that 
$$
\cl{x+1/2}=\cl{x-1/2}+1,
$$
and thus $j_0=i_0+1$ and one now verifies that 
$$
s(i_0)+(i_0+1)(d-\sigma-i_0) = s(j_0)+j_0(d-\sigma-j_0), 
$$  
hence the two terms in~\eqref{eq:case-case} are equal. Therefore,  we have
$$
\varXi_N\ll N^{s(j_0)+j_0(d-\sigma-j_0)},
$$
where $j_0$ is given by~\eqref{eq:j0}. Thus by~\eqref{eq:A} we obtain the desired bound.

\subsection{Proof of Theorem~\ref{thm:MVT-higher}}
Applying a similar chain of arguments as in the proof of Theorem~\ref{thm:MVT-higher-box}, substituting   the bound~\eqref{eq:zero-max} 
in the inequality 
$$
\int_{\Tor}   \left|\sum_{n=1}^{N} a_n\e(x_1 n+\ldots +x_d n^d)\right|^{2 s(d)} d \mu(\vx)
\le \sum_{\bxi\in \Z^{d}} J_{\va, N}(\bxi)\left |\widehat{\mu}(\bxi)\right|,
$$
see~\eqref{eq:A} and recalling   the condition~\eqref{eq:L1} we obtain the desired bound.  

\subsection{Proof of Theorem~\ref{thm:almostall}}

Let  $\alpha > \vartheta$  and set
$$
N_i =    2^i, \qquad i =1, 2, \ldots.
$$

Recall that the set $W_{\va, d}(\vx; N)$ is given by Lemma~\eqref{lem:completion}.  We now consider the set 
$$
\cB_{i} = \left\{ \vx\in \spt \mu:~ \left|W_{\va, d}( \vx; N_{i})\right|\ge N_i^{\alpha}  \right\}. 
$$

By the {\it  Markov inequality}, the definition of $W_{\va, d}(\vx; N_i)$ and  the mean value bound of Theorem~\ref{thm:almostall},  we derive 
$$
\mu(\cB_i)\le N_i^{-\rho \alpha}\int_{\Tor}\left |W_{\va, d}(\vx; N_i)\right|^\rho d\mu(\vx)\le N_i^{\rho \vartheta-\rho \alpha+o(1)}.
$$
Combining with  $\alpha>\vartheta$ and $N_i=2^{i}$, we have 
$$
\sum_{i=1}^{\infty}\mu(\cB_i)<\infty.
$$
Thus the  {\it Borel--Cantelli lemma\/} implies  
$$
\mu \left(\bigcap_{q=1}^{\infty}\bigcup_{i=q}^{\infty} \cB_i \right)=0.
$$
It follows that  for $\mu$-almost all $\vx\in \spt \mu$ there exists $i_{\vx}$ such that for any $i\ge i_{\vx}$  one has  
\begin{equation}
\label{eq:y}
|W_{\va, d}(\vx; N_i)|\le N_i^{\alpha}.
\end{equation}
We now fix this $\vx$ in the following argument.   

For any $N\ge N_{i_{\vx}}$ there exists $i$ such that 
$$
N_{i-1}\le N< N_{i}.
$$
By Lemma~\ref{lem:completion} and~\eqref{eq:y} we have 
$$
S_{\va, d}(\vx; N) \ll W_{\va, d}(\vx; N_i) \ll N^{\alpha}.
$$
Since $\alpha>\vartheta$ is arbitrary, this  gives the desired result. 

\subsection{Proof of Theorem~\ref{thm:HD}}

We start from some auxiliary results.  
We adapt the definition of the \emph{singular value  function} from~\cite[Chapter~9]{Falconer} to the following. 

\begin{definition} 
Let $\cR\subseteq \R^d$ be a rectangle with side lengths 
$$
r_1\ge \ldots \ge r_d.
$$ 
For $0< t \le d$  we set  $$
\varphi_{0, t}(\cR)=r_1^{t},
$$
and for $k=1, \ldots, d-1$ we define 
$$
\varphi_{k, t}(\cR)=r_1\ldots r_{k}r_{k+1}^{t-k}.
$$
\end{definition}

Note that  for a rectangle $\cR\subseteq \R^2$ with the side length $r_1\ge r_2$ we have 
$$
\varphi_{k, t} (\cR)=
  \begin{cases}
   r_1^t  & \text{ for } k=0, \\
   r_1 r_2^{t-1} &  \text{ for }  k=1.
  \end{cases}
$$

\begin{remark}
The notation  $\varphi_{k, t}(\cR)$ roughly means that we can cover the rectangle $\cR$ by 
about (up to a constant factor) 
$$
\frac{r_1}{r_{k+1}}\ldots \frac{r_k}{r_{k+1}}
$$
balls  of radius
$ r_{k+1}$, and hence this leads to the term 
$$
\varphi_{k, t}(\cR) = \frac{r_1}{r_{k+1}}\ldots \frac{r_k}{r_{k+1}} r_{k+1}^t
$$
in the expression for the Hausdorff measure with the parameter $t$ 
(again up to a constant factor which does not affect our results). 
\end{remark}

We now turn to the proof of Theorem~\ref{thm:HD}.  From the definition of the Hausdorff dimension, using the above notation and applying  Corollary~\ref{cor:cover} we obtain
\begin{equation}\label{eq:computing-HD}
\begin{split}
\dim (\cE_{\va, d, \alpha+\eta} \cap \spt \mu) \le \inf \Bigl \{ t>0:&~\sum_{i=1}^{\infty} \sum_{\cR\in \fR(N_i)}  \varphi_{k, t}(\cR)<\infty, \\
&  \text{for some }  k=0, \ldots, d-1 \Bigr \}.
\end{split}
\end{equation}

Furthermore, for  $k=1, \ldots, d-1$ and $0<t\le d$ we have 
\begin{equation} 
\begin{split}
\label{eq:i-level}
\sum_{\cR\in \fR(i)} \varphi_{k, t}(\cR)&=\# \fR(N_i)\zeta_{i, k+1}(\varepsilon)^{t-k} \prod_{j=1}^{k} \zeta_{i, j}(\varepsilon)   \\
&\le N_i^{\rho \vartheta-\rho \alpha} f\(\bzeta_i(\varepsilon)\)^{-1}\zeta_{i, k+1}(\varepsilon)^{t-k} \prod_{j=1}^{k} \zeta_{i, j}(\varepsilon)\\
&\le N_i^{\rho \vartheta -\rho \alpha+s(k)+t(\alpha-k-2)+C(d)\varepsilon} f\(\bzeta_i(\varepsilon)\)^{-1}, 
\end{split}
\end{equation}
where $C(d)$  is a positive constant which depends only $d$. We remark that~\eqref{eq:i-level}  also holds for the case $k=0$, in which we have  $s(k)=0$.  
To be precise for $k=0$ we have 
$$
\sum_{\cR\in \fR(N_i)} \varphi_{0, t}(\cR)\le N_i^{\rho \vartheta -\rho \alpha+t(\alpha-2)+C(d)\varepsilon} f\(\bzeta_i(\varepsilon)\)^{-1}.
$$

Applying~\eqref{eq:computing-HD} we derive that  
$$
\dim (\cE_{\va, d, \alpha+\eta} \cap \spt \mu) \le t
$$
provided that the parameters  $\alpha, \rho,  k, t, \vartheta$ satisfy the following  condition
$$
\sum_{i=1}^{\infty}N_i^{\rho \vartheta -\rho \alpha+s(k)+t(\alpha-k-2)+C(d)\varepsilon} f\(\bzeta_i(\varepsilon)\)^{-1}<\infty,
$$
where $c(d)$ is a positive constant that depends only  on $d$. By the arbitrary choice of $\eta>0$ we finish the proof.

\section{Proofs of  results for special sets}

\subsection{Proof of  Theorem~\ref{thm:MVT-Mom}}

We start with recalling the following well-known estimate, see, for example,~\cite[Equation~(8.6)]{IwKow}.

\begin{lemma}
\label{lem:linear-bound}
For any $t\in [-1/2, 1/2]$ we have (for convenience we set $1/0=\infty$) 
$$
\sum_{n=1}^{N} \e(nt)\ll \min \left \{N, \frac{1}{t}\right \}.
$$
\end{lemma}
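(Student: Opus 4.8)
The statement is the standard geometric-series estimate, so the plan is simply to evaluate the sum in closed form and play the trivial bound against it. First I note that the quantity $\left|\sum_{n=1}^{N}\e(nt)\right|$ is even in $t$, since $\e(n(-t))=\overline{\e(nt)}$; hence it suffices to treat $t\in[0,1/2]$, and the $1/t$ in the minimum should be read as $1/|t|$. The triangle inequality gives at once
$$
\left|\sum_{n=1}^{N}\e(nt)\right|\le \sum_{n=1}^{N}|\e(nt)|=N,
$$
which supplies the first entry of the minimum and also covers the degenerate case $t=0$, where the convention $1/0=\infty$ makes the second entry inactive.

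For $t\neq 0$ I would sum the geometric progression with ratio $\e(t)$,
$$
\sum_{n=1}^{N}\e(nt)=\e(t)\,\frac{\e(Nt)-1}{\e(t)-1},
$$
and estimate the numerator crudely by $|\e(Nt)-1|\le 2$, so that
$$
\left|\sum_{n=1}^{N}\e(nt)\right|\le \frac{2}{|\e(t)-1|}.
$$
The only point requiring any care is the denominator. Writing $|\e(t)-1|=2|\sin(\pi t)|$ and using the elementary inequality $|\sin(\pi t)|\ge 2|t|$ on $[-1/2,1/2]$ (the sine graph lies above the chord joining $(0,0)$ and $(1/2,1)$, by concavity of $\sin$ on $[0,\pi/2]$), I obtain $|\e(t)-1|\ge 4|t|$ and therefore $\left|\sum_{n=1}^{N}\e(nt)\right|\le 1/(2|t|)\ll 1/|t|$. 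Combining this with the bound $N$ above yields $\left|\sum_{n=1}^{N}\e(nt)\right|\ll \min\{N,1/|t|\}$, as claimed.

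There is no genuine obstacle here: the argument is entirely elementary, and the only step where one must be slightly attentive is the lower bound $|\sin(\pi t)|\ge 2|t|$ (a form of Jordan's inequality), which is what guarantees the clean dependence on $1/|t|$ uniformly for $t\in[-1/2,1/2]$. This is precisely why the authors recall the estimate as known rather than reproving it in detail.
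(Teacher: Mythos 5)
Your proof is correct and is precisely the standard geometric-series argument (closed form, numerator bounded by $2$, denominator $|\e(t)-1|=2|\sin(\pi t)|\ge 4|t|$ via concavity) that underlies the estimate the paper simply cites from~\cite[Equation~(8.6)]{IwKow} without reproving. Your observations that the sum's modulus is even in $t$, so that $1/t$ must be read as $1/|t|$, and that the convention $1/0=\infty$ handles $t=0$, are exactly the right points of care; there is nothing to add.
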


For interval $[a, b]\subseteq [0,1]$ let $\mu_{[a, b]}$ be the natural measure  on the moment curve over the interval $[a, b]$, see~\eqref{eq:moment-measure}.

We also need the following $L^2$-type estimate which could be of independent interest.

\begin{lemma}
\label{lem:L2}
For any  $0<\delta<1/2$ and $N\in \N$  such that $N\delta>1$ we have 
$$
\sum_{\bxi\in \cD_N} |\widehat{\mu_{[\delta, 1/2]}}(\bxi)|^{2}\ll \delta^{1-d} N^{s(d)-d},
$$
where the notation $\cD_N$ is given by~\eqref{eq:DN}.
\end{lemma}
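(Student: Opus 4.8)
The plan is to extract the full cancellation available in the $\xi_d$-variable by Parseval's identity on the circle, rather than estimating each $\widehat{\mu_{[\delta,1/2]}}(\bxi)$ separately: the pointwise decay from Lemma~\ref{lem:moment-decay} only gives $|\widehat{\mu_{[\delta,1/2]}}(\bxi)|\ll\|\bxi\|^{-1/d}$, and squaring and summing this over $\cD_N$ is far too lossy (most of $\cD_N$ sits at $\|\bxi\|\asymp N^d$, where this bound is weakest).

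First I would record that, by the definition~\eqref{eq:moment-measure} of the pushforward measure,
$$
\widehat{\mu_{[\delta,1/2]}}(\bxi)=\int_\delta^{1/2}\e\(-(t\xi_1+\ldots+t^d\xi_d)\)\,dt,
$$
and then substitute $a=t^d$, so that $t^i=a^{i/d}$ and $dt=\tfrac1d a^{1/d-1}\,da$. This identifies the integral with the $\xi_d$-th Fourier coefficient $\widehat{H}(\xi_d)=\int_0^1 H(a)\e(-a\xi_d)\,da$ of the function
$$
H(a)=\frac1d\,a^{1/d-1}\,\e\(-(a^{1/d}\xi_1+\ldots+a^{(d-1)/d}\xi_{d-1})\)\mathbf 1_{[\delta^d,2^{-d}]}(a),
$$
which is supported in $[\delta^d,2^{-d}]\subseteq[0,1)$ and carries $\xi_1,\dots,\xi_{d-1}$ as parameters.

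Next, for each fixed $(\xi_1,\dots,\xi_{d-1})$ I would sum over $\xi_d$, first enlarging the range $|\xi_d|\le s(d)N^d$ to all of $\Z$ (legitimate since the terms $|\widehat{H}(\xi_d)|^2$ are nonnegative) and then applying Parseval:
$$
\sum_{\xi_d\in\Z}\bigl|\widehat{\mu_{[\delta,1/2]}}(\bxi)\bigr|^2=\sum_{\xi_d\in\Z}|\widehat{H}(\xi_d)|^2=\int_0^1|H(a)|^2\,da=\frac{1}{d^2}\int_{\delta^d}^{2^{-d}}a^{2/d-2}\,da.
$$
Since the exponent $2/d-2$ is negative, this elementary integral is controlled by its lower endpoint, and one checks that it is $\ll\delta^{2-d}$ for $d\ge3$ and $\ll\log(1/\delta)$ for $d=2$; in either case it is $\ll\delta^{1-d}$, uniformly in $\xi_1,\dots,\xi_{d-1}$.

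Finally I would sum this uniform bound over the remaining box $|\xi_i|\le s(d)N^i$, $i=1,\dots,d-1$, whose number of lattice points is $\ll\prod_{i=1}^{d-1}N^i=N^{s(d-1)}=N^{s(d)-d}$, which yields $\sum_{\bxi\in\cD_N}|\widehat{\mu_{[\delta,1/2]}}(\bxi)|^2\ll\delta^{1-d}N^{s(d)-d}$. The one step that genuinely matters is the Parseval reduction in the $\xi_d$-direction: bounding the Dirichlet kernel $\sum_{|\xi_d|\le s(d)N^d}\e(\xi_d\theta)$ crudely by $\min\{N^d,1/\|\theta\|\}$ would cost an extra factor $\log N$ (its $L^1$-norm), which is exactly what the identity $\sum_{\xi_d}|\widehat{H}(\xi_d)|^2=\|H\|_2^2$ avoids; everything after that is routine bookkeeping.
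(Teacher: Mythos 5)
Your proof is correct, and it takes a genuinely different route from the paper's. The paper expands $|\widehat{\mu_{[\delta,1/2]}}(\bxi)|^2$ into a double integral over $(u,v)\in[\delta,1/2]^2$, sums the Dirichlet kernels first to get $\prod_{i=1}^d\min\{N^i,1/|u^i-v^i|\}$, uses the Lagrange mean value theorem ($|u^i-v^i|\ge i\delta^{i-1}|u-v|$) to reduce everything to the single variable $|u-v|$, and then runs a strip-by-strip decomposition of the square according to the size of $1/|u-v|$. Your argument replaces all of that with a single structural observation: the substitution $a=t^d$ is a diffeomorphism of $[\delta,1/2]$ onto $[\delta^d,2^{-d}]$ whose Jacobian $\frac1d a^{1/d-1}$ is controlled on that range, so the $\xi_d$-sum closes \emph{exactly} by Parseval, uniformly in $(\xi_1,\dots,\xi_{d-1})$, because $|H(a)|$ does not depend on those parameters; the remaining $(d-1)$-dimensional box is counted trivially, and $\prod_{i=1}^{d-1}N^i=N^{s(d)-d}$ matches the target exponent. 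The computation checks out: $\int_{\delta^d}^{2^{-d}}a^{2/d-2}\,da\ll\delta^{2-d}$ for $d\ge3$ and $\ll\log(1/\delta)$ for $d=2$, both $\ll\delta^{1-d}$, and enlarging the $\xi_d$-range to all of $\Z$ is harmless since the terms are nonnegative. Your route is shorter, and in fact yields a slightly sharper $\delta$-dependence ($\delta^{2-d}$, resp.\ $\log(1/\delta)$) than the stated $\delta^{1-d}$; what the paper's method buys instead is robustness, since the kernel-plus-mean-value-theorem argument does not rely on the top-degree phase being globally linearizable by a single change of variables and so adapts more readily to other curves (compare the remark in the Comments section about general $g_i(\vt)$). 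One small aside: your closing claim that the crude bound $\min\{N^d,1/\|\theta\|\}$ necessarily costs a factor $\log N$ is not quite fair to the paper, whose dyadic sums are geometric and dominated by their extreme terms, so no logarithm survives there either; but this does not affect the validity of your proof.
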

\begin{proof}
Observe that 
\begin{align*}
\sum_{\bxi\in \cD_N} & |\widehat{\mu_{[\delta, 1/2]}}(\bxi)|^{2}=   \sum_{\bxi\in \cD_N} \left |\int_{\delta}^{1/2} \e(\xi_1 t+\ldots+\xi_d t^{d})dt \right |^{2}\\
&\qquad = \sum_{\bxi\in \cD_N} \int_{\delta}^{1/2}\int_{\delta}^{1/2} \e(\xi_1(u-v)+\ldots+\xi_d (u^d-v^{d}))dudv\\
&\qquad = \int_{\delta}^{1/2}\int_{\delta}^{1/2} \prod_{i=1}^{d} \sum_{|\xi_i|\le 2s(d)N^{i}} \e(\xi_i(u^{i}-v^{i})) du dv.
\end{align*}  

For each $i=1, \ldots, d$ the Lagrange mean value theorem  implies that 
$$
i\delta^{i-1}|u-v|\le |u^{i}-v^{i}|\le 1/2.
$$
Combining with Lemma~\ref{lem:linear-bound} we arrive at
\begin{equation} \label{eq:hat-mu-2}
\begin{split}
\sum_{\bxi\in \cD_N}& |\widehat{\mu_{[\delta, 1/2]}}(\bxi)|^{2}\\
&\quad \ll \int_{\delta}^{1/2}\int_{\delta}^{1/2} \prod_{i=1}^{d} 
\min\left\{N^{i}, \frac{1}{|u^{i}-v^{i}|}\right\} du dv\\
&\quad \ll \delta^{d-s(d)} \int_{\delta}^{1/2}\int_{\delta}^{1/2} \prod_{i=1}^{d} \min\left\{N^{i}\delta^{i-1}, \frac{1}{|u-v|}\right\}  du dv.
\end{split} 
\end{equation}

Now we decompose the set $\cS=[\delta,1/2]\times [\delta, 1/2]$ into finite ``strips" and estimate the above integral over each strip. Precisely, for each $i=1, \ldots, d-1$, denote 
$$
\cS_i=\left\{(u, v)\in \cS:~N^{i}\delta^{i-1}\le \frac{1}{|u-v|}<N^{i+1}\delta^{i}\right\}.
$$
Furthermore, for $i=0$ and $i=d$  denote 
\begin{align*}
\cS_0&=\left\{(u, v)\in \cS: ~\frac{1}{|u-v|}<N\right\} ,\\
\cS_d&=\left\{(u, v)\in \cS:~\frac{1}{|u-v|}\ge N^{d}\delta^{d-1}\right\} .
\end{align*}

{\bf Integration over $\cS_0$.} Since $N\delta>1$,  any $(u, v)\in \cS_0$ implies 
$$
\frac{1}{|u-v|}<N^{i}\delta^{i-1}, \quad i=1, \ldots, d.
$$
Thus we obtain 
\begin{equation}\label{eq:S-0}
\begin{split}
 \int_{\cS_0}  \prod_{i=1}^{d} \min&\left\{N^{i}\delta^{i-1}, \frac{1}{|u-v|}\right\}  du dv\\
 &  \le \int_{\cS_0} \frac{1}{|u-v|^{d}} du dv\\
 & \ll \sum_{k=1}^{\log N} 2^{kd} \lambda(\{(u, v)\in \cS_0:~2^{-k-1}<|u-v|\le 2^{-k}\})\\
 &  \ll N^{d-1}, 
\end{split} 
\end{equation}  
where $\lambda$ is the  $2$-dimensional Lebesgue measure.
 
{\bf Integration over $\cS_d$.} Again since $N\delta>1$,  any $(u, v)\in \cS_d$ implies 
$$
\frac{1}{|u-v|}\ge N^{i}\delta^{i-1}, \qquad i=1, \ldots, d.
$$
Thus we have 
\begin{equation}\label{eq:S-d}
\begin{split}
 \int_{\cS_d} \prod_{i=1}^{d} \min\left\{N^{i}\delta^{i-1}, \frac{1}{|u-v|}\right\}  du dv & \le \int_{\cS_d} N^{s(d)} \delta^{s(d)-d} du dv\\
 & \ll N^{s(d)-d}\delta^{s(d)-2d+1}.
\end{split} 
\end{equation}

{\bf Integration over  $\cS_i$.}  For $i=1, \ldots, d-1$ the definition of $\cS_i$ implies 
$$
\prod_{i=1}^{d} \min\left\{N^{i}\delta^{i-1}, \frac{1}{|u-v|}\right\}\le  N^{s(i)}\delta^{s(i)-i} \frac{1}{|u-v|^{d-i}}.
$$
It follows that 
\begin{equation}\label{eq:Si}
\begin{split}
 \int_{\cS_i} \prod_{i=1}^{d}  \min & \left\{N^{i}\delta^{i-1}, \frac{1}{|u-v|}\right\}  du dv\\
 & \qquad  \le N^{s(i)}\delta^{s(i)-i}  \int_{\cS_i}  \frac{1}{|u-v|^{d-i}}dudv. 
\end{split} 
\end{equation}
Taking dyadic decomposition over the range 
$$
N^{i}\delta^{i-1}\le 1/|u-v|< N^{i+1}\delta^{i},
$$
that is for each $k\in \N$ let 
$$
\cS_{i, k}=\{(u, v)\in \cS_i: 2^{k-1} N^{i}\delta^{i-1}\le 1/|u-v|<2^{k} N^{i}\delta^{i-1}\}.
$$
Then  for the  Lebesgue measure of $\cS_{i, k}$ we have    
$$
\lambda\(\cS_{i, k}\) \ll 2^{-k}N^{-i}\delta^{1-i}.
$$ 
Thus we derive 
\begin{align*}
\int_{\cS_i}  \frac{1}{|u-v|^{d-i}}dudv &\ll \sum_{k=1}^{\log N\delta}(2^{k}N^{i}\delta^{i-1})^{d-i} \lambda\(\cS_{i, k}\)\\
&\ll \sum_{k=1}^{\log N\delta}(2^{k}N^{i}\delta^{i-1})^{d-i} 2^{-k}N^{-i}\delta^{1-i} \\
&= N^{(d-i-1)(i+1)}\delta^{i(d-i-1)}.
\end{align*}  
Combining with~\eqref{eq:Si} we obtain 
\begin{equation}\label{eq:SSi}
\begin{split}
 \int_{\cS_i} \prod_{i=1}^{d} \min&\left\{N^{i}\delta^{i-1}, \frac{1}{|u-v|}\right\}  du dv\\
 &\qquad \le N^{(i+1)(d-1-i/2)}\delta^{(d-1)i-s(i)}. 
\end{split}
\end{equation} 

The function $f(t)=(t+1)(d-1-t/2)$ attains its maximal value at 
$$
t_0=d-3/2.
$$ 
Note that the function $g(t)=(d-1)t-s(t)$  attains its minimal  value at $t_0=d-3/2$.

Let  $i_0=d-1$ (or $i_0=d-2$ by symmetry). Substituting in~\eqref{eq:SSi} we obtain 
\begin{equation}\label{eq:S-i}
\begin{split}
 &\int_{\cS_i} \prod_{i=1}^{d} \min\left\{N^{i}\delta^{i-1}, \frac{1}{|u-v|}\right\}  du dv\ll N^{s(d)-d} \delta^{s(d)-2d+1}. 
\end{split}
\end{equation}

Combining~\eqref{eq:hat-mu-2} with the estimates~\eqref{eq:S-0},~\eqref{eq:S-d} and~\eqref{eq:S-i} we arrive at
$$
\sum_{\bxi\in \cD_N} |\widehat{\mu_{[\delta, 1/2]}}(\bxi)|^{2}\ll \delta^{d-s(d)}N^{d-1}+\delta^{1-d} N^{s(d)-d}\ll \delta^{1-d}N^{s(d)-d},
$$
 since $N\delta>1$ and $d\ge 2$. 
\end{proof}

The proof of Lemma~\ref{lem:L2} implies the following result.  Recall that for an
interval $[a, b]\subseteq [0,1]$ the measure  $\mu_{[a, b]}$ is  the natural measure  on the moment curve over the interval $[a, b]$, see~\eqref{eq:moment-measure}. 

\begin{lemma}
\label{lem:short-interval}
For any interval $[a, b]\subseteq [1/2, 1]$ with $b-a\le 1/2d$ we have 
$$
\sum_{\bxi\in \cD_N} |\widehat{\mu_{[a, b]}}(\bxi)|^{2}\ll N^{s(d)-d}.
$$
\end{lemma}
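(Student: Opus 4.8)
The plan is to mimic the proof of Lemma~\ref{lem:L2} almost verbatim, exploiting the fact that on $[a,b]\subseteq[1/2,1]$ the variables $u,v$ are bounded away from $0$, which collapses all the $\delta$-powers appearing there into harmless $d$-dependent constants. First I would open up the square exactly as before,
$$
\sum_{\bxi\in \cD_N} |\widehat{\mu_{[a, b]}}(\bxi)|^{2}
= \int_{a}^{b}\int_{a}^{b} \prod_{i=1}^{d} \sum_{|\xi_i|\le s(d)N^{i}} \e(\xi_i(u^{i}-v^{i})) \, du\, dv,
$$
and then apply Lemma~\ref{lem:linear-bound} to each inner sum, obtaining (up to constants) a product of factors $\min\{N^{i}, 1/|u^{i}-v^{i}|\}$.

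The two places where the hypotheses $[a,b]\subseteq[1/2,1]$ and $b-a\le 1/(2d)$ enter are the following. By the Lagrange mean value theorem $|u^{i}-v^{i}| = i\,w^{i-1}|u-v|$ for some $w$ between $u$ and $v$; since $w\in[1/2,1]$ we have $w^{i-1}\asymp_d 1$, so $|u^{i}-v^{i}|\asymp_d |u-v|$ with no $\delta$-dependence. Moreover $|u^{i}-v^{i}|\le i|u-v|\le d(b-a)\le 1/2$ for every $i=1,\ldots,d$, which is exactly the condition needed for Lemma~\ref{lem:linear-bound} to apply with argument in $[-1/2,1/2]$; this is the sole purpose of the restriction $b-a\le 1/(2d)$. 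Feeding $|u^{i}-v^{i}|\asymp_d|u-v|$ into the product reduces the whole quantity, up to a $d$-dependent constant, to
$$
\int_{a}^{b}\int_{a}^{b} \prod_{i=1}^{d} \min\left\{N^{i}, \frac{1}{|u-v|}\right\} du\, dv,
$$
which is precisely the integral treated in Lemma~\ref{lem:L2} in the degenerate case where $\delta$ is a fixed constant rather than a small parameter.

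From here I would reuse the strip decomposition $\cS_0,\ldots,\cS_d$ of $[a,b]^2$ according to the dyadic size of $1/|u-v|$ and estimate the integral over each strip exactly as in Lemma~\ref{lem:L2}. Setting $\delta\asymp 1$ throughout kills every power of $\delta$ in the bounds~\eqref{eq:S-0},~\eqref{eq:S-d} and~\eqref{eq:SSi}: the dominant strip again produces $N^{s(d)-d}$, while the contributions of $\cS_0$ and the remaining $\cS_i$ are no larger, which yields the claimed bound.

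I expect no serious obstacle, since the statement is genuinely a specialisation of Lemma~\ref{lem:L2}. The only points requiring care are the bookkeeping of the $d$-dependent constants generated by $w^{i-1}\asymp_d 1$, which are absorbed into the implied constant in $\ll$, and the verification that each strip estimate, with $\delta$ replaced by a constant, still contributes at most $N^{s(d)-d}$; this is routine once one notes that the optimisation over $i$ carried out in Lemma~\ref{lem:L2} (attained at $i_0=d-1$) is unaffected by fixing $\delta\asymp 1$.
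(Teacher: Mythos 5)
Your proposal is correct and is essentially the paper's own proof: the paper likewise observes that for $u,v\in[a,b]\subseteq[1/2,1]$ the Lagrange mean value theorem gives $|u-v|\ll|u^{i}-v^{i}|\le 1/2$, and then reruns the argument of Lemma~\ref{lem:L2} with $\delta$ taken to be a positive constant. Your additional remark that the hypothesis $b-a\le 1/(2d)$ is there precisely to keep the arguments of Lemma~\ref{lem:linear-bound} in $[-1/2,1/2]$ is a correct and useful piece of bookkeeping that the paper leaves implicit.
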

\begin{proof}
For each $i=1, \ldots, d$ and any $u, v\in [a, b]$ the Lagrange mean value theorem  implies that 
$$
|u-v|\ll |u^{i}-v^{i}|\le 1/2.
$$
Thus applying the argument as in the proof of Lemma~\ref{lem:L2}, taking $\delta$ to be some positive constant,  we obtain the desired bound.
\end{proof}

We now turn to the proof of Theorem~\ref{thm:MVT-Mom}. Without loss of generality we assume that $0<\delta<1/2$.   
First of all let 
\begin{equation}
\label{eq:decomposition}
[\delta, 1]\subseteq \bigcup_{j=0}^{2d} \cI_j, 
\end{equation}
where 
$\cI_0=[\delta, 1/2]$ and  for $j=1, \ldots, 2d$
$$
\cI_j= [1/2+(j-1)/2d, 1/2+j/2d].
 $$
 Note that for each interval $\cI\subseteq [0, 1]$, applying the argument as in the proof of Theorem~\ref{thm:MVT-higher-box},  
 similarly to~\eqref{eq:A}  we obtain  
\begin{equation}
\label{eq:2s2s}
\int_{\cI} \left |  \sum_{n=1}^{N}\e(tn+\ldots+t^{d}n^{d})\right |^{2s} dt\le  
\sum_{\bxi\in \cD_N}I_s(N; \bxi) \left |  \widehat{\mu_{\cI}}(\bxi) \right |, 
\end{equation}
where $I_s(N;\bxi)$ is defined similarly to $J_{\va, N}(\bxi)$ as 
the  number of solutions to the system of equations
\begin{align*}
\sum_{j=1}^{s}  n_j^i & - \sum_{j=s +1}^{2s }  n_j^i =\xi_i,   \qquad  i=1, \ldots, d, \\
 n_j  &= 1, \ldots, N,  \qquad   j =1, \ldots, 2s. 
\end{align*}

Combining with~\eqref{eq:decomposition} and~\eqref{eq:2s2s} we obtain 
\begin{equation}\label{eq:delta-1}
\begin{split}
&\int_{\delta}^{1} \left |  \sum_{n=1}^{N}\e(tn+\ldots+t^{d}n^{d})\right |^{2s } dt\\
&\qquad\qquad\le \sum_{j=0}^{2d} \int_{\cI_j} \left |  \sum_{n=1}^{N}\e(tn+\ldots+t^{d}n^{d})\right |^{2s } dt\\
&\qquad \qquad \le \sum_{j=0}^{2d} \sum_{\bxi\in \cD_N}I_s(N; \bxi) \left |  \widehat{\mu_{\cI_j}}(\bxi) \right |.
\end{split}
\end{equation}

By the Cauchy-Schwarz inequality we have
\begin{equation}
\label{eq:C-S}
 \sum_{\bxi\in \cD_N} I_s(N; \bxi)  |\widehat{\mu_{\cI_j}}(\bxi)|\ll \left ( \sum_{\bxi\in \cD_N}I_s(N; \bxi)^ 2\sum_{\bxi\in \cD_N} |\widehat{\mu_{\cI_j}}(\bxi)|^{2} \right )^{1/2}. 
\end{equation}
We now note that by~\eqref{eq:MVT}  we obtain
\begin{equation}\label{eq:ISN}
 \begin{split}
 \sum_{\bxi\in \cD_N}I_s(N; \bxi)^2 & \le  I_{2s}(N; \mathbf{0})  = \int_{\Tor} |S_d(\vx; N)|^{4s}d\vx\\
 &  \le N^{2s +o(1)} +  N^{4s - s(d)+o(1)}.
\end{split}
\end{equation}

For $j=0$, applying~\eqref{eq:C-S},~\eqref{eq:ISN} and  Lemma~\ref{lem:L2},  we arrive at 
\begin{equation}\label{eq:delta-1/2}
\begin{split}
 \sum_{\bxi\in \cD_N} I_s(N; \bxi) & |\widehat{\mu_{[\delta, 1/2]}}(\bxi)| \\
 &\ll    \delta^{(1-d)/2}  \(N^{s} +  N^{2s - s(d)/2}\)N^{s(d)/2-d/2+o(1)}.
\end{split}
\end{equation}

Similarly,  for each $j=1, \ldots, 2d$, we use Lemma~\ref{lem:short-interval} instead of Lemma~\ref{lem:L2} and   derive 
$$
 \sum_{\bxi\in \cD_N} I_s(N; \bxi) |\widehat{\mu_{\cI_j}}(\bxi)|\ll   \(N^{s} +  N^{2s - s(d)/2}\)N^{s(d)/2-d/2+o(1)}.
$$
Combining this with~\eqref{eq:delta-1} and~\eqref{eq:delta-1/2} we obtain the desired bound.

\subsection{Proof of  Theorem~\ref{thm:moment}}
Let $\delta>0$ and 
$
\Gamma_\delta=\Gamma\setminus \cB(\v0, \delta).
$
Moreover let $\muMd$ be the natural measure  on $\Gamma_{\delta}$, see~\eqref{eq:moment-measure}. 
 For any  $\bzeta=(\zeta_1, \ldots, \zeta_d)$ with 
$0<\zeta_j<1$, $j=1, \ldots, d$, we have 
\begin{equation}
\label{eq:intersection-Gamma-delta}
\zeta_d\ll \diam(\cR(\vx, \bzeta)\cap \Gamma_\delta)\ll\zeta_d,
 \quad \vx\in \Gamma_\delta, 
\end{equation} 
where the implied constant may 
depend on $\delta$.  
It follows that the measure $\muMd$ is $f$-regular if we take 
\begin{equation}
\label{eq:f-m}
f(\bzeta)=c_d\, \zeta_d
\end{equation}
for some positive constant $c_d$.

Note that Lemma~\ref{lem:moment-decay} implies that 
$$
\widehat{\muMd}(\bxi)\ll  \(1+\|\bxi\|\)^{-1/d}, \qquad \bxi\in \R^d.
$$
Thus by Remark~\ref{rem:almostall} we derive that the measure $\muMd$ satisfies the condition of Theorem~\ref{thm:HD} with $\rho=2$ and $\vartheta=1/2$.

Let $1/2<\alpha<1$ and let $\eps>0$ be a small parameter. In the following we use the notation from Corollary~\ref{cor:cover}.
Let $N_i=2^{i}$, $i\in \N$. Then for any $\eta>0$   we have   
$$
\cE_{\va, d, \alpha+\eta} \cap \Gamma_{\delta} \subseteq \bigcap_{q=1}^{\infty}\bigcup_{i=q}^{\infty} \bigcup_{\cR\in \fR(N_i)} (\cR\cap \Gamma_\delta).
$$
Moreover, by Corollary~\ref{cor:cover} the centre  of each $\cR\in \fR(N_i)$ is in $\Gamma_\delta$.  For $t>0$,  applying Corollary~\ref{cor:cover}, the upper bound in~\eqref{eq:intersection-Gamma-delta}, and~\eqref{eq:f-m}, we obtain 
\begin{align*}
\sum_{i=1}^{\infty} \# \fR(N_i) \diam(\cR\cap \Gamma_\delta)^t
&\le  N_{i}^{1-2 \alpha+o(1)} \zeta_{i, d}^{-1+t}\\
&\le N_i^{d+2-3\alpha+t(\alpha-d-1)+C(d)\eps+ o(1)}.
\end{align*}
where $C(d)>0$ is a  constant that depends only on $d$.  Applying~\eqref{eq:computing-HD}, we conclude that  
$$
\dim (\cE_{\va, d, \alpha+\eta} \cap \Gamma_\delta)\le t
$$
provided that the parameters   satisfy the following  further condition
$$
d+2-3\alpha+t(\alpha-d-1)+C(d)\eps<0.
$$
By the arbitrary choice of $\eps>0$ it is sufficient  to have 
$$
d+2-3\alpha+t(\alpha-d-1)<0.
$$
Thus we conclude that 
\begin{equation}
\label{eq:Gamma_delta}
\dim (\cE_{\va, d, \alpha+\eta}\cap \Gamma_\delta)\le \frac{d+2-3\alpha}{d+1-\alpha}.
\end{equation}

Note that Hausdorff dimension has the following {\it countable stability\/}  (see~\cite[Section~2.2]{Falconer}): 
for $\cA_i \subseteq \R^d$, $i\in \N$ we have 
\begin{equation}
\label{eq:stability}
\dim \bigcup_{i=1}^{\infty} \cA_i=\sup_{i\in \N} \dim \cA_i.
\end{equation}

Clearly we have  
$$
\Gamma=\bigcup_{i=1}^{\infty} \Gamma_{1/i} \cup \left\{0\right\}.
$$ 
Therefore, combining~\eqref{eq:Gamma_delta} and~\eqref{eq:stability} we derive that 
$$
\dim (\cE_{\va, d, \alpha+\eta}\cap \Gamma)\le \frac{d+2-3\alpha}{d+1-\alpha}.
$$
By the arbitrary choice of $\eta$ we obtain the desired bound.

\subsection{Proof of Theorem~\ref{thm:MVT-L}}

Let $\bomega=(\omega_1, \ldots, \omega_d)$ and 
$$
\bxi_{n, m}=(n-m, \ldots, n^d-m^d).
$$
Let 
$$
\fI=\int_{L_{\bomega}} |S_{\va, d}(\vx)|^{2}d\muo(\vx) 
=\int_{0}^{1} \left|\sum_{n=1}^{N} a_n \e(t\omega_1 n+\ldots t\omega_d n^d)\right|^2 dt. 
$$  
Expanding the square, we obtain 
$$
\fI =\int_{0}^{1}\sum_{1\le n, m\le N}a_n\overline{a_m}\e(t \bomega \cdot \bxi_{n-m}) dt.
$$

Observe that there exists a positive constant $C_{\bomega}$ such that if 
$$ 
\max\{n, m\}\ge C_{\bomega},
$$
and $n\neq m$, then we have 
\begin{equation}
\label{eq:nm}
|\bomega \cdot \bxi_{n-m}|\gg |n-m|.
\end{equation}  
Indeed, for $\bomega$ let $1\le k\le d$ be the maximal number such that $\omega_k\neq 0$. If $k=1$ then clearly we have~\eqref{eq:nm}. For the case $k>1$, for each $1\le i<k$ and $n\neq m$,  we have 
$$
\frac{|w_i(n^i-m^{i})|}{|w_k(n^{k}-m^{k})|}\ll \min\left \{\frac{1}{n}, \frac{1}{m}\right \}, 
$$
where the implied constant may 
depend on $\bomega$.  Thus by choosing $C_{\bomega}$ large enough, and $n\neq m$ we obtain 
$$
|\bomega \cdot \bxi_{n-m}| \gg  |w_k(n^{k}-m^{k})|\gg |n-m|,
$$
which gives~\eqref{eq:nm}.

Applying  Lemma~\ref{lem:var} with   
$k=1$ and the estimate~\eqref{eq:nm}, we deduce 
\begin{align*}
\fI
&\le \int_{0}^{1}\sum_{1\le n, m\le N}a_n\overline{a_m}\e(t \bomega \cdot \bxi_{n-m}) dt\\
&\ll \sum_{\substack{1\le n=m\le N \\ \text{ or }1\le n, m\le C_{\bomega}}} a_n\overline{a_m}+N^{o(1)}\sum_{1\le m \neq n\le N}  1/|n-m|\\
& \le N^{1+o(1)},
\end{align*}
which gives the desired result.

\subsection{Proof of Corollary~\ref{cor:almostall-segment}}
Let $a(N, n)$ be a double sequence with the condition~\eqref{eq:double a}. Taking $a(N, n)$ instead of $a_n$ in the proof of Theorem~\ref{thm:MVT-L}, we obtain 
$$
\int_{0}^{1} \left|\sum_{n=1}^{N} a(n, N) \e(t\omega_1 n+\ldots t\omega_d n^d)\right|^2 dt\le N^{1+o(1)}.
$$
Using Theorem~\ref{thm:almostall} we obtain the desired result.

\subsection{Proof of Corollary~\ref{cor:HD-segment}}

Let $1/2<\alpha<1$ and let $\eps>0$ be a small parameter. In the following we use the notation from Corollary~\ref{cor:cover}.
Let $N_i=2^{i}$, $i\in \N$. Then for any $\eta>0$   we have 
$$
\cE_{\va, d, \alpha+\eta} \cap L_{\bomega} \subseteq \bigcap_{q=1}^{\infty}\bigcup_{i=q}^{\infty} \bigcup_{\cR\in \fR(N_i)} (\cR\cap L_{\bomega}).
$$

Note that the centre  of each $\cR\in \fR(N_i)$ is in $L_{\bomega}$. Thus the assumption~\eqref{eq:omega-k} implies that 
\begin{equation}
\label{eq:diam-L}
\zeta_{i, k}\ll\diam(\cR\cap L_{\bomega})\ll  \zeta_{i, k}.
\end{equation} 

By applying arguments   similar to that in the proof of Theorem~\ref{thm:moment}, taking $\zeta_{i, k}$ 
instead of $\zeta_{i, d}$,  we obtain the desired bound.   

\begin{remark}
\label{rem:unif segm} 
Note that for  segments we have the uniform bound~\eqref{eq:diam-L}, thus there is not need  to use the decomposition argument as in the proof of Theorem~\ref{thm:moment}.
\end{remark}

 \section{Comments}
 Certainly the method of the proof of  Theorem~\ref{thm:MVT-Mom} works for many other 
 polynomial curves and rationally parametrised varieties, that is,  for exponential sums 
 with polynomials 
$$
  f_\vt(X) = g_1(\vt) X+ \ldots +  g_d(\vt) X^d, \quad \vt = (t_1, \ldots, t_m) \in  \R^m
$$
where $g_i({\mathbf T}) \in \R[{\mathbf T}]$, $i=1, \ldots, d$,  are polynomials in $m$ variables, 
although the specific estimate in  Lemma~\ref{lem:L2} depends on the specific 
form of the moment curve~\eqref{eq:moment-curve}.

However we do not see any approach to improving the general bound of 
Theorem~\ref{thm:MVT}   and  Theorem~\ref{thm:MVT-higher-box} for 
the parameter $\vx$ which runs through some general algebraic variety $\cV$,  
that is, for the integrals  
$$
I_s(\cV) =  \int_{\cV}   \left|\sum_{n=1}^{N} a_n\e\(x_1n+\ldots +x_d n^{d}\) \right|^s d \mu_\mathsf{V}(\vx), 
$$
where $\mu_\mathsf{V}$ is some natural  measure on $\cV$.

Note that Example~\ref{exam:MVT S M} gives upper bounds on $I_{2s(d)}\(\Sp^{d-1}\)$, 
which follow directly from  Theorem~\ref{thm:MVT-higher-box}. We are however interested 
in stronger results utilising  some specific properties of $\cV$, which we pose as an 
open question. 

 \section*{Acknowledgement}

The authors are grateful to Bryce Kerr for his encouragement and many helpful discussions. 
 
This work was  supported   by ARC Grant~DP170100786.

\end{document}